\documentclass[11pt]{article}
\usepackage{latexsym,amsmath,color,amsthm,amssymb,epsfig,graphicx,mathrsfs}
\usepackage{graphicx}
\usepackage{amssymb}
\usepackage[left=1in,top=1in,right=1in,bottom=1in]{geometry}
\usepackage[linktocpage=true]{hyperref}
\usepackage{setspace}
\usepackage{amssymb, amsmath, amsthm, graphicx,mathrsfs}

\def\qed{\hfill\ifhmode\unskip\nobreak\fi\quad\ifmmode\Box\else\hfill$\Box$\fi}
\def\ite#1{\hfill\break${}$\hbox to 50pt {\quad(#1)\hfill}}
\newtheorem{thm}{Theorem}[section]
\newtheorem{cor}[thm]{Corollary}

\newtheorem{lem}[thm]{Lemma}

\newtheorem{prop}[thm]{Proposition}
\newtheorem{claim}[thm]{Claim}

\def\ex{{\rm{ex}}}

\parindent=0pt
\parskip=6pt

\begin{document}

\title{\vspace{-0.5in}Stability in the Erd\H{o}s--Gallai Theorem on  cycles and paths}

\author{
{{Zolt\'an F\" uredi}}\thanks{
\footnotesize {Alfr\' ed R\' enyi Institute of Mathematics, Hungary
E-mail:  \texttt{zfuredi@gmail.com.}
Research supported in part by the Hungarian National Science Foundation OTKA 104343,
 by the Simons Foundation Collaboration Grant 317487,
and by the European Research Council Advanced Investigators Grant 267195.
}}
\and
{{Alexandr Kostochka}}\thanks{
\footnotesize {University of Illinois at Urbana--Champaign, Urbana, IL 61801
 and Sobolev Institute of Mathematics, Novosibirsk 630090, Russia. E-mail: \texttt {kostochk@math.uiuc.edu}.
 Research of this author
is supported in part by NSF grants  DMS-1266016 and  DMS-1600592
and  by Grant NSh 1939.2014.1 of the President of
Russia for Leading Scientific Schools.
}}
\and{{Jacques Verstra\"ete}}\thanks{Department of Mathematics, University of California at San Diego, 9500
Gilman Drive, La Jolla, California 92093-0112, USA. E-mail: {\tt jverstra@math.ucsd.edu.} Research supported by NSF Grant DMS-1101489. }}

\date{\small July 19, 2015. Revised on May 9, 2016}

\maketitle

\vspace{-0.3in}

\begin{center}
 Dedicated to the memory of G. N. Kopylov
\end{center}

\begin{abstract}
The Erd\H{o}s-Gallai Theorem states that for $k \geq 2$, every graph of average degree more than $k - 2$ contains a $k$-vertex path.
This result is a consequence of a stronger result of Kopylov: if $k$ is odd, $k=2t+1\geq 5$, 
  $n \geq (5t-3)/2$,
and $G$ is an $n$-vertex $2$-connected graph
with at least $h(n,k,t) := {k-t \choose 2} + t(n -k+ t)$ edges, then $G$ contains a cycle of length at least $k$ unless $G = H_{n,k,t} := K_n - E(K_{n - t})$.

In this paper we prove a stability version of the Erd\H{o}s-Gallai Theorem: we show that
for all $n \geq 3t > 3$, and $k \in \{2t+1,2t + 2\}$, every $n$-vertex 2-connected graph $G$ with $e(G) > h(n,k,t-1)$ either contains a cycle of length at least $k$ or contains a set of $t$ vertices whose removal gives a star forest.
In particular, if $k = 2t + 1 \neq 7$, we show $G \subseteq  H_{n,k,t}$. The lower bound $e(G) > h(n,k,t-1)$ in these results is tight
   and  is smaller than Kopylov's bound $h(n,k,t)$ by a term of $n-t-O(1)$.

\medskip\noindent
{\bf{Mathematics Subject Classification:}} 05C35, 05C38.\\
{\bf{Keywords:}} Tur\' an problem, cycles, paths.
\end{abstract}

\section{Introduction}

A cornerstone of extremal combinatorics is the study of Tur\'{a}n-type problems for graphs. One of the fundamental questions in extremal graph theory
is to determine the maximum number of edges in an $n$-vertex graph with no $k$-vertex path. According to~\cite{FS224}, this problem was posed by Tur\'{a}n. A solution
to the problem was obtained by Erd\H{o}s and Gallai~\cite{ErdGal59}:

\begin{thm}[Erd\H{o}s and Gallai~\cite{ErdGal59}]\label{ErdGallaiPath}
Let $G$ be an $n$-vertex graph with more than $\frac{1}{2}(k-2)n$ edges, $k\ge 2$.
Then $G$ contains a $k$-vertex path $P_k$.
\end{thm}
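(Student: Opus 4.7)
I would prove Theorem~\ref{ErdGallaiPath} by strong induction on $n$. For the base, observe that whenever $n \le k-1$ we have $e(G) \le \binom{n}{2} \le \tfrac{1}{2}(k-2)n$, so the hypothesis cannot be satisfied and the statement is vacuous. Thus assume $n \ge k$ in what follows, and suppose $e(G) > \tfrac{1}{2}(k-2)n$.

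The strategy is two quick reductions followed by a Dirac-type structural lemma. First, reduce to the connected case: if $G$ decomposes into components $G_1, \dots, G_r$, an averaging argument on the inequality $\sum e(G_i) > \tfrac{1}{2}(k-2)\sum |V(G_i)|$ forces some component $G_i$ to satisfy $e(G_i) > \tfrac{1}{2}(k-2)|V(G_i)|$; since $|V(G_i)| < n$, the induction hypothesis applied to $G_i$ yields $P_k$. So we may assume $G$ is connected. Second, if some vertex $v$ has $\deg(v) \le \tfrac{k-2}{2}$, then $G - v$ is an $(n-1)$-vertex graph with more than $\tfrac{1}{2}(k-2)n - \tfrac{k-2}{2} = \tfrac{1}{2}(k-2)(n-1)$ edges, so induction once more delivers $P_k$. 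Therefore we may assume $G$ is connected with $\delta(G) \ge \lceil (k-1)/2 \rceil$.

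At this point I would invoke the following Dirac-type lemma: \emph{a connected graph with minimum degree $\delta$ on $n$ vertices contains a path on at least $\min(2\delta+1, n)$ vertices.} Since $2\lceil (k-1)/2\rceil + 1 \ge k$ and $n \ge k$, this lemma instantly produces the required $P_k$. To prove the lemma, let $P = v_0 v_1 \cdots v_\ell$ be a longest path in $G$. Maximality forces $N(v_0) \cup N(v_\ell) \subseteq V(P)$, so $\ell \ge \delta$. If $\ell \ge 2\delta$ we are done. Otherwise the two $\delta$-element subsets $\{i - 1 : v_0 v_i \in E(G)\}$ and $\{j : v_j v_\ell \in E(G)\}$ both lie in $\{0,1,\dots,\ell-1\}$ and therefore intersect at some index $m$; the standard rotation then produces a cycle $v_0 v_1 \cdots v_m v_\ell v_{\ell-1} \cdots v_{m+1} v_0$ of length $\ell+1$ through $V(P)$. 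If this cycle were not spanning, connectivity would provide an edge leaving it, yielding a path longer than $P$ and contradicting maximality. Hence the cycle is Hamiltonian and provides a path on $n \ge k$ vertices.

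The main obstacle is the Dirac-type lemma, and in particular the rotation step converting a maximal path of length less than $2\delta$ into a cycle through the same vertex set. Everything else --- the base case, the reduction to connectivity, and the deletion of a small-degree vertex --- follows from elementary averaging once the induction is set up correctly.
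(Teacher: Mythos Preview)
Your proof is correct and complete, but it follows a genuinely different route from the paper's. The paper does not prove Theorem~\ref{ErdGallaiPath} directly at all: it observes that if $H$ has no $P_k$, then adjoining a universal vertex produces a graph $H'$ on $n+1$ vertices with $e(H)+n$ edges and no cycle of length at least $k+1$, and then invokes Theorem~\ref{ErdGallaiCyc} (the Erd\H{o}s--Gallai cycle theorem) to bound $e(H')\le \tfrac{1}{2}k\,n$, which rearranges to $e(H)\le \tfrac{1}{2}(k-2)n$. So the paper treats the path statement as a corollary of the cycle statement, which it does not prove but cites.

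Your argument, by contrast, is self-contained: induction on $n$ with the two standard reductions (to connectivity by averaging over components, and to $\delta(G)\ge\lceil (k-1)/2\rceil$ by deleting a low-degree vertex), followed by the Dirac-type lemma that a connected graph with minimum degree $\delta$ has a path on $\min(2\delta+1,n)$ vertices. The rotation argument you give for the lemma is the standard one and is correctly executed. What your approach buys is a direct, elementary proof that does not rely on the (harder) cycle theorem; what the paper's approach buys is economy, since once Theorem~\ref{ErdGallaiCyc} is in hand the path version follows in one line, and it is the cycle version that drives all of the paper's later stability results.
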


This result is best possible for $n$ divisible by $k - 1$, due to the $n$-vertex graph whose components are cliques of order $k - 1$.
To obtain Theorem \ref{ErdGallaiPath}, Erd\H{o}s and Gallai observed that if $H$ is an $n$-vertex graph without a $k$-vertex path $P_k$, then adding a new vertex and joining it to all other vertices we have a graph $H'$
on $n+1$ vertices $e(H)+n$ edges and containing no cycle $C_{k+1}$ or longer. Then Theorem~\ref{ErdGallaiPath} is a consequence of the following:

\begin{thm}[Erd\H{o}s and Gallai~\cite{ErdGal59}]\label{ErdGallaiCyc}
Let $G$ be an $n$-vertex graph with more than $\frac{1}{2}(k-1)(n-1)$ edges, $k \ge 3$.
Then $G$ contains a cycle of length at least $k$.
\end{thm}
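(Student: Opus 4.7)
The plan is to argue by strong induction on $n$. For $n \leq k-1$ the bound $\binom{n}{2} \leq \tfrac{1}{2}(k-1)(n-1)$ makes the hypothesis unattainable, so the statement is vacuous; this provides the base of the induction.

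For the inductive step I apply two reductions. First, if $G$ contains a vertex $v$ with $\deg_G(v) \leq (k-1)/2$, then
\[
e(G-v) \;>\; \tfrac{1}{2}(k-1)(n-1) - \tfrac{k-1}{2} \;=\; \tfrac{1}{2}(k-1)(n-2),
\]
and the inductive hypothesis applied to $G-v$ already yields a cycle of length at least $k$ in $G$. Hence we may assume $\delta(G) \geq k/2$. Next, suppose $G$ is not 2-connected, and let $B_1,\dots,B_m$ be its blocks. Because $\sum_i e(B_i) = e(G) > \tfrac{1}{2}(k-1)(n-1) \geq \tfrac{1}{2}(k-1)\sum_i(|V(B_i)|-1)$, averaging produces a block $B$ with $e(B) > \tfrac{1}{2}(k-1)(|V(B)|-1)$; since $|V(B)| < n$, induction applied to $B$ yields the required long cycle inside $G$.

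The remaining case, and the crux of the argument, is when $G$ is 2-connected and $\delta(G) \geq k/2$. Here I would invoke the classical theorem of Dirac asserting that every 2-connected graph $G$ contains a cycle of length at least $\min(|V(G)|,\, 2\delta(G))$. Since $n \geq k$ (forced by the edge hypothesis) and $2\delta(G) \geq k$, this supplies a cycle of length at least $k$, completing the induction. The main obstacle is therefore Dirac's cycle lemma itself; its standard proof picks a longest path $v_0 v_1 \cdots v_\ell$ in $G$, observes that $N(v_0)\cup N(v_\ell) \subseteq V(P)$ by maximality of $P$, and then exploits 2-connectedness, through a P\'osa-style rotation or a disjoint-paths argument between the endpoints, to close up a cycle through $v_0$ and $v_\ell$ of the desired length.
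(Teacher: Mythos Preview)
The paper does not actually prove Theorem~\ref{ErdGallaiCyc}; it is quoted as a classical result of Erd\H{o}s and Gallai and used only as background. There is therefore no ``paper's own proof'' to compare against.

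That said, your argument is correct and is essentially the standard textbook derivation. The induction base and both reductions are clean: the block count $\sum_i(|V(B_i)|-1)\le n-1$ is valid for any graph (with equality exactly when $G$ is connected), and the averaging forces a block $B$ with $e(B)>\tfrac12(k-1)(|V(B)|-1)$ and $|V(B)|<n$; the minimum-degree reduction is likewise fine. In the remaining case you correctly arrive at a $2$-connected $G$ with $\delta(G)\ge \lceil k/2\rceil$, and Dirac's theorem (which the paper itself records as Theorem~\ref{th:dirac}) finishes the job. Your sketch of Dirac's proof via a longest path is a bit terse; the precise statement you want is exactly Kopylov's Theorem~\ref{le:kop} in the paper: for a longest path $P$ with endpoints $x,y$ one has $c(G)\ge\min\{|V(P)|,\,d(x,P)+d(y,P)\}$, and since $N(x),N(y)\subseteq V(P)$ by maximality of $P$, this gives $c(G)\ge\min\{n,2\delta(G)\}$. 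So within the paper's own toolkit your proof goes through without any gap.
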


This result is best possible for $n - 1$ divisible by $k - 2$, due to any $n$-vertex graph where each block is a clique of order $k - 1$.
Let $\ex(n,P_k)$ be the maximum number of edges in an $n$-vertex graph with no $k$-vertex path; Theorem~\ref{ErdGallaiPath}
shows $\ex(n,P_k) \leq \frac{1}{2}(k - 2)n$ with equality for $n$ divisible by $k - 1$.  Several proofs and sharpenings of the Erd\H{o}s-Gallai theorem
were obtained by Woodall~\cite{Woodall}, Lewin~\cite{Lewin}, Faudree and Schelp\cite{FaudScheB,FaudSche75} and Kopylov~\cite{Kopy} -- see~\cite{FS224} for further details.
The strongest version was proved by
   Kopylov~\cite{Kopy}.
 To describe his result, we require
 the following graphs.  Suppose that $n \geq k$, 
    $(k/2) > a\geq 1$. Define the $n$-vertex graph $H_{n,k,a}$ as follows.
 The vertex set of $H_{n,k,a}$ is partitioned into three sets $A,B,C$ such that $|A| = a$, $|B| = n - k + a$ and $|C| = k - 2a$
 and the edge set of $H_{n,k,a}$ consists of all edges between $A$ and $B$ together with all edges in $A \cup C$.
Let
\[ h(n,k,a) : = e(H_{n,k,a}) = {k - a \choose 2} + a(n - k + a).\]

\begin{thm} [Kopylov  \cite{Kopy}] \label{th:Kopylov2}
Let $n \geq k \geq 5$ and $t = \lfloor \frac{k-1}{2}\rfloor$. If $G$ is an $n$-vertex 2-connected graph with no cycle of length at least $k$, then
\begin{equation}\label{eq:kop}
   e(G)\leq \max\left\{h(n,k,2), h(n,k,t)\right\}
\end{equation}
with equality only if $G = H_{n,k,2}$ or $G = H_{n,k,t}$.
 \end{thm}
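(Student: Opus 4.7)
My plan is to use Kopylov's disintegration (peeling) technique, driven by a Bondy--Dirac-type cycle lemma. Set $t=\lfloor (k-1)/2\rfloor$, so $k\in\{2t+1,2t+2\}$, and assume $G$ is $2$-connected on $n\ge k$ vertices with $c(G)\le k-1$. The main ingredient, provable via a P\'osa rotation on a longest path, is: \emph{in any $2$-connected graph $H$, every pair of non-adjacent vertices $u,v$ satisfies $c(H)\ge \min\{|V(H)|,\,d_H(u)+d_H(v)\}$.} Applied to $G$, every non-edge $uv$ forces $d(u)+d(v)\le k-1$, and in particular $\delta(G)\le t$.

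I would then iteratively delete a vertex of degree at most $t$ from the current graph, producing a terminal graph $G^*$ on $m$ vertices (possibly empty) with $\delta(G^*)\ge t+1$ when nonempty. Since each peeled vertex accounts for at most $t$ removed edges, $e(G)\le e(G^*)+t(n-m)$. A blockwise application of the cycle lemma to $G^*$ shows that any two non-adjacent non-cut vertices of a block would create a cycle of length $\ge 2(t+1)\ge k$ in $G$; handling the cut vertices forces each block, and then all of $G^*$, to be a clique on $m\le k-1$ vertices, giving $e(G^*)\le \binom{m}{2}$.

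The key remaining step is to bound $\binom{m}{2}+t(n-m)$ by $\max\{h(n,k,2),h(n,k,t)\}$ over $m\in\{0,1,\dots,k-1\}$. Because the naive bound is too weak at intermediate $m$, I would refine the accounting by tracking which peeled vertices have degree strictly less than $t$. Two extremal patterns emerge: either $G^*\cong K_{k-t}$ with every peeled vertex of degree exactly $t$ (giving $e(G)\le h(n,k,t)$, matching $H_{n,k,t}$), or $G^*\cong K_{k-2}$ with every peeled vertex of degree exactly $2$ (giving $e(G)\le h(n,k,2)$, matching $H_{n,k,2}$). Convexity of $h(n,k,\cdot)$ in its last argument reduces the optimum to these two endpoints. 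For equality, tracing backwards through the peeling forces $G^*$ to be exactly $K_{k-t}$ or $K_{k-2}$ and each peeled vertex's neighbourhood to lie inside a fixed $t$-set (respectively $2$-set), reconstructing $G$ as $H_{n,k,t}$ or $H_{n,k,2}$.

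The principal obstacle will be the refined edge-counting just described: the crude bound $\binom{m}{2}+t(n-m)$ overshoots $h(n,k,t)$ for intermediate $m$, so one must either split into cases on the structure of $G^*$ or invoke the non-edge inequality $d(u)+d(v)\le k-1$ simultaneously on peeled and unpeeled vertices to squeeze out the correct bound. Managing the possible loss of $2$-connectivity during peeling and verifying the uniqueness of the two extremal graphs in the equality analysis are secondary technical hurdles.
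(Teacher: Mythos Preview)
The paper does not contain a proof of this statement: Theorem~\ref{th:Kopylov2} is quoted from Kopylov~\cite{Kopy} as a known result and used as a black box (for instance in Cases~2.3 and~3.4 of Section~\ref{secshort}), so there is nothing in the paper to compare your argument against.

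That said, your outline is very much in the spirit of Kopylov's original disintegration argument, and the Bondy-type lemma you invoke (non-adjacent $u,v$ in a $2$-connected $H$ force $c(H)\ge\min\{|V(H)|,d(u)+d(v)\}$) is essentially what the paper records as Theorem~\ref{le:kop} applied to a longest path. Two points deserve sharpening. First, your claim that each block of $G^*$ must be a \emph{clique} does not follow directly from the cycle lemma: when $|B|\le k-1$ the inequality $c(B)\ge\min\{|B|,2(t+1)\}$ only yields that $B$ is Hamiltonian, not complete. What you actually need (and what suffices) is the size bound $m\le k-1$ together with the trivial $e(G^*)\le\binom{m}{2}$; but to get $m\le k-1$ you must also rule out $G^*$ being disconnected or having several blocks, which is where the loss of $2$-connectivity under peeling genuinely bites and is not a ``secondary'' issue. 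Second, you correctly identify the real crux: $\binom{m}{2}+t(n-m)$ exceeds $\max\{h(n,k,2),h(n,k,t)\}$ for intermediate $m$, so the naive count fails. Kopylov handles this by running the $\alpha$-disintegration for a carefully chosen $\alpha$ and exploiting that $h(n,k,a)$ is convex in $a$, so the maximum over $2\le a\le t$ is attained at an endpoint; your sketch gestures at this convexity reduction but does not yet pin down how the peeling parameter interacts with the endpoint values $a=2$ and $a=t$. Until that step is written out, the argument is a plan rather than a proof.
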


In this paper, we prove a stability version of Theorems~\ref{ErdGallaiPath} and \ref{th:Kopylov2}. A {\em star forest} is a vertex-disjoint
union of stars.

\begin{thm}\label{t3}
Let $t \geq 2$ and $n \geq 3t$ and $k \in \{2t + 1,2t + 2\}$. Let $G$ be a 2-connected $n$-vertex graph containing no cycle of length at least $k$. Then
$e(G) \leq h(n,k,t-1)$ unless
\begin{center}
\begin{tabular}{lp{5.8in}}
$(a)$ & $k = 2t + 1$, $k \neq 7$, and $G \subseteq  H_{n,k,t}$ or \\
$(b)$ & $k = 2t + 2$ or $k = 7$, and $G - A$ is a star forest for some $A \subseteq V(G)$ of size at most $t$. \\
\end{tabular}
\end{center}
\end{thm}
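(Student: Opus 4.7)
My plan is to combine Kopylov's Theorem~\ref{th:Kopylov2} with a structural analysis organized around a longest cycle in $G$. Let $G$ be a 2-connected $n$-vertex graph with no cycle of length at least $k$ and $e(G) > h(n,k,t-1)$. Under the hypotheses $n \geq 3t$ and $t \geq 2$, Kopylov's bound $\max\{h(n,k,2),h(n,k,t)\}$ equals $h(n,k,t)$, and the gap $h(n,k,t) - h(n,k,t-1) = n - 2k + 3t - 1$ is of order $n - t$, so $G$ sits in a narrow strip just below the extremal configuration $H_{n,k,t}$. The goal is to pin down this strip: locate a ``hub'' $A \subseteq V(G)$ with $|A| \leq t$ such that $G - A$ is very sparse, and distinguish $G \subseteq H_{n,k,t}$ (case (a)) from the star-forest conclusion (case (b)).

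I would take a longest cycle $C$ in $G$, set $H = G - V(C)$, and apply P\'osa-type rotations along $C$ to deduce that the attachments of components of $H$ onto $V(C)$ all lie in a common small ``attachment set'' $T \subseteq V(C)$ with $|T| \leq t$, well-spaced along $C$. Setting $A := T$ and splitting $e(G)$ into edges inside $A$, between $A$ and $V \setminus A$, and inside $G - A$, the first two contributions sum to at most $\binom{|A|}{2} + |A|(n - |A|)$, and combined with $e(G) > h(n,k,t-1)$ this pins $|A|$ down to $t - 1$ or $t$ and limits $e(G - A)$. The pivotal structural step is to prove $G - A$ is a star forest: otherwise it contains either two disjoint edges or a path on three vertices, and 2-connectedness together with the many $A$-to-$V(G)$ edges allows splicing these pieces with a long path through $A$ to produce a cycle of length at least $k$, contradicting the circumference bound.

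The main obstacle is separating case (a) from case (b). Strengthening ``$G - A$ is a star forest'' to $G \subseteq H_{n,k,t}$ when $k = 2t + 1$ and $k \neq 7$ requires excluding \emph{any} edge in $G - A$. Given such an edge $xy$ with $|A| = t$, one builds an alternating path of length $2t$ between $x$ and $y$ that visits all $t$ vertices of $A$ and $t - 1$ further vertices of $V \setminus A$, each step being an $A$-to-$V\setminus A$ edge; closing via $xy$ produces a $(2t+1)$-cycle, a contradiction. For $k = 2t + 2$ the required alternating path has the wrong parity, and $H_{n,k,t}$ itself carries an edge in $G - A$, so only the star-forest conclusion is possible. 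The $k = 7$ exception is witnessed by $K_{4, n-4}$, which has $4(n-4)$ edges, no $C_7$ (being bipartite), and becomes the single star $K_{1,n-4}$ after removing any three vertices on the four-side, yet is not a subgraph of $H_{n,7,3}$; ruling out other similar exceptions at $k = 7$ and $k = 2t+2$ and classifying them cleanly under the star-forest description is where the bulk of the technical work lies.
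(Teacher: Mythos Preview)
Your outline diverges substantially from the paper's proof, and the central structural step is not justified.

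The crux of your argument is the claim that P\'osa-type rotations along a longest cycle $C$ force the attachments of all components of $G-V(C)$ to lie in a \emph{common} set $T\subseteq V(C)$ with $|T|\le t$. Rotations (or the Kopylov endpoint-degree lemma) do show that any single vertex $v\notin V(C)$ has at most $t$ neighbours on $C$, pairwise nonconsecutive; they do \emph{not} show that different outside vertices share the same $t$-set. In $H_{n,k,t-1}$, for instance, the longest cycle has length $k-1$, and vertices of $B$ attach to the $(t-1)$-set $A$ while vertices of $C$ attach elsewhere on the cycle; perturbing by one edge does not instantly collapse all attachments to a single $t$-set. The paper obtains the common hub $A$ only after a lengthy detour: it contracts edges (Rules (R1)--(R4)) down to a graph $G_m$ with $\delta(G_m)\ge t$, applies Enomoto's theorem (Theorem~\ref{t24}) to the cycle $G_m[V(C)]$ to extract a dense bipartite subgraph $H\in\mathcal F_0\cup\dots\cup\mathcal F_4$, and then proves (Lemma~\ref{zl}) that such an $H$ survives un-contraction back to $G$. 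The set $A$ is then read off from the bipartition of $H$. Without something playing the role of $\delta\ge t$ or the $\mathcal F_i$ subgraph, there is no mechanism forcing a global hub, and your edge count $\binom{|A|}{2}+|A|(n-|A|)+e(G-A)$ never gets started.

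Two smaller points. First, your witness for the $k=7$ exception is wrong: $K_{4,n-4}$ contains $C_8$ once $n\ge 8$, so it has a cycle of length $\ge 7$ and is not admissible. The actual $k=7$ exceptions (see the paper's $\mathcal G_2(n,6)$ and $\mathcal G_3(n,6)$) are built from $J_3$-bridges glued at a $2$-set, not from complete bipartite graphs. Second, even granting a hub $A$ of size $t$, your edge bound only gives $e(G-A)\le h(n,k,t)-h(n,k,t-1)-1=n-t-O(1)$, which by itself does not rule out a long path in $G-A$; you still need the splicing argument, and for that you need long $x,y$-paths in $G[V(H)]$ for \emph{arbitrary} $x,y$ --- exactly the content of the paper's Lemma~\ref{lem111}, whose proof again relies on the specific $\mathcal F_i$ structure.
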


This result is best possible in the following sense. Note that $H_{n,k,t - 1}$ contains no cycle of length at least $k$, is not a subgraph of $H_{n,k,t}$,
and $H_{n,2t+2,t - 1}-A$ has a cycle for every $A\subseteq V(H_{n,2t+2,t - 1})$ with $|A|=t$. Thus
the claim of Theorem \ref{t3} does not hold for $G = H_{n,k,t-1}$. Therefore the condition $e(G) \leq h(n,k,t-1)$ in Theorem \ref{t3} is best possible.
Since
$$ h(n,2t+2,t)=\binom{t}{2}+t(n-t)+1 =h(n,2t+1,t)+1$$
and
$$  h(n,2t+2,t-1)=\binom{t}{2}+(t-1)(n-t)+6 =h(n,2t+1,t-1)+3, $$
 the difference between Kopylov's bound and the bound in Theorem~\ref{t3} is
\begin{equation}\label{j13}
  h(n,k,t) - h(n,k,t-1) = \left\{\begin{array}{ll}
n - t - 3 & \mbox{ if }k = 2t + 1 \\
n - t - 5 & \mbox{ if }k = 2t + 2.
\end{array}\right.
\end{equation}

It is interesting that for a fixed $k$, the difference in~\eqref{j13} divided by $h(n,k,t)$ does not tend to $0$ when $n\to\infty$.

Theorem \ref{t3}
yields the following cleaner claim for $3$-connected graphs.

\begin{cor}\label{3con}
Let $k \geq 11$, $t = \lfloor \frac{k-1}{2}\rfloor$, and $n \geq \frac{3k}{2}$. If $G$ is an $n$-vertex $3$-connected graph with no cycle of length at least $k$,
then $e(G) \leq h(n,k,t-1)$ unless $G \subseteq  H_{n,k,t}$.
\end{cor}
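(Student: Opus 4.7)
Let $t = \lfloor (k-1)/2 \rfloor$, so $t \geq 5$ and $k \in \{2t+1, 2t+2\}$. Since $n \geq 3k/2 \geq 3t$, Theorem~\ref{t3} applies to $G$. We may assume $e(G) > h(n,k,t-1)$ (else we are done), so one of exceptions (a), (b) holds. Exception (b) requires $k \in \{2t+2, 7\}$, and $k \geq 11$ rules out $k = 7$; hence if $k$ is odd then only (a) can occur and it immediately delivers $G \subseteq H_{n,k,t}$.

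From now on suppose $k = 2t + 2$ and exception (b) applies: there is a set $A \subseteq V(G)$ with $|A| \leq t$ such that $G - A$ is a star forest. Note that $G \subseteq H_{n,2t+2,t}$ is equivalent to the existence of $A^* \subseteq V(G)$ with $|A^*| = t$ and $e(G - A^*) \leq 1$: the (at most) two endpoints of the surviving edge form $C^*$, and the remaining $n - t - 2$ vertices form $B^*$. Thus if we first prove $e(G - A) \leq 1$, we may pad $A$ to $A^*$ by appending $t - |A|$ isolated vertices of $G - A$; the supply is ample since at least $n - |A| - 2 \geq 2t + 1$ such vertices exist.

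So suppose $G - A$ has at least two edges. Then either (A) a single star contains two leaves $v_1, v_2$ with centre $c$, or (B) two distinct stars each contain an edge $u_iv_i$ $(i=1,2)$. By 3-connectedness, every leaf of the star forest has at least two neighbours in $A$, and every isolated vertex of $G - A$ has at least three neighbours in $A$. In Case A, the cycle
\[ v_1\,c\,v_2\,a_1\,w_1\,a_2\,w_2\,\cdots\,a_{t-1}\,w_{t-1}\,a_t\,v_1 \]
of length $2t + 2$ works provided $a_1, \ldots, a_t$ is an ordering of $A$ with $a_1 \in N(v_2)$, $a_t \in N(v_1)$, and each $w_i$ is a distinct isolated vertex of $G - A$ lying in $N(a_i) \cap N(a_{i+1})$. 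In Case B the analogous cycle
\[ u_1\,v_1\,a_1\,w_1\,\cdots\,a_j\,u_2\,v_2\,a_{j+1}\,\cdots\,a_t\,u_1 \]
of length $2t + 2$ does the job, using $t - 2$ distinct isolated bridges. Either cycle has length exactly $k = 2t+2$, contradicting the hypothesis; therefore $e(G - A) \leq 1$ and the reduction above completes the proof.

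\textbf{Main obstacle.} The delicate step is the cycle construction: we must select both an ordering of $A$ and $t-1$ (Case A) or $t-2$ (Case B) distinct bridging isolated vertices, each covering a prescribed consecutive pair of $A$-vertices, from only the local degree data ($\geq 3$ neighbours in $A$ per isolated vertex of $G - A$, and $\geq 2$ per leaf). A short greedy/counting argument exploiting $t \geq 5$ and the bank of at least $2t-1$ isolated vertices suffices; one must additionally take care to anchor the cycle at leaves rather than at centres of stars having three or more leaves (which a priori might have no neighbours in $A$).
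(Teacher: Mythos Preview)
Your reduction for odd $k$ is correct, and the observation that $G\subseteq H_{n,2t+2,t}$ amounts to finding a $t$-set $A^\ast$ with $e(G-A^\ast)\le 1$ is fine. The gap is in the even case: you assert that if the particular set $A$ handed to you by Theorem~\ref{t3} satisfies $e(G-A)\ge 2$, then $G$ contains a $(2t+2)$-cycle. This implication is false.

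Take $G=H_{n,2t+2,t}$ itself, with its canonical partition $A^\ast\cup B^\ast\cup C^\ast$ where $|A^\ast|=t$, $|C^\ast|=2$, and $G[A^\ast\cup C^\ast]=K_{t+2}$. For any $a\in A^\ast$ and $c_1\in C^\ast$, set $A:=(A^\ast\setminus\{a\})\cup\{c_1\}$. Then $|A|=t$ and $G-A$ is a single star with centre $a$ and leaf set $B^\ast\cup\{c_2\}$, hence it has $n-t-1\ge 2$ edges; yet $c(G)=2t+1<k$. Theorem~\ref{t3} promises only that \emph{some} $A$ with the star-forest property exists, and nothing prevents it from returning this ``wrong'' $A$. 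In this example there are \emph{no} isolated vertices in $G-A$ at all, so your ``bank of at least $2t-1$ isolated vertices'' is empty and neither the Case~A nor the Case~B cycle can even be started. Moreover, the only vertex of $G-A$ adjacent to $c_1\in A$ is $c_2$, so no ordering of $A$ admits the required bridges. Even when isolated vertices are available, selecting an ordering of $A$ together with a system of distinct representatives $w_i\in N(a_i)\cap N(a_{i+1})$ is a genuine Hall-type problem that does not follow from the degree bounds alone; the promised ``short greedy/counting argument'' is not supplied and, as the example shows, cannot exist at that level of generality.

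The paper proceeds differently: it invokes the finer Theorem~\ref{main} rather than Theorem~\ref{t3}. For $k\ge 11$ that theorem places $G$ inside a member of $\mathcal G_1(n,k)\cup\mathcal G_2(n,k)\cup\mathcal G_3(n,k)$, and every graph in $\mathcal G_2$ or $\mathcal G_3$ carries an explicit $2$-vertex cut (the pair $\{a_1,b_1\}$, respectively the set $A'$), which a spanning subgraph inherits. Three-connectedness therefore forces $G\subseteq H_{n,k,t}$ in one line. Your route would have to rediscover that hidden $2$-cut from the bare star-forest conclusion, and the counterexample shows this cannot be done without first modifying $A$---precisely the extra structural information that Theorem~\ref{main} supplies and Theorem~\ref{t3} suppresses.
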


In the same way that Theorem~\ref{ErdGallaiCyc} implies Theorem~\ref{ErdGallaiPath}, Theorem \ref{t3} applies to give a stability theorem for paths:

\begin{thm}\label{t4'} Let $t \geq 2$ and $n \geq 3t - 1$ and $k \in \{2t,2t+1\}$, and
let $G$ be a connected $n$-vertex graph containing no $k$-vertex path. Then $e(G) \leq h(n + 1,k + 1,t - 1) - n$ unless
\begin{center}
\begin{tabular}{lp{5.8in}}
$(a)$ $k = 2t$, $k \neq 6$, and $G \subseteq  H_{n ,k ,t-1}$ or \\
$(b)$ $k = 2t+1$ or $k = 6$, and $G - A$ is a star forest for some $A \subseteq V(G)$ of size at most $t-1$.
\end{tabular}
\end{center}
\end{thm}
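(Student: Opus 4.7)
The plan is to apply the classical Erd\H{o}s--Gallai cone reduction, which converts the path problem into the cycle problem handled by Theorem~\ref{t3}. Given a connected $n$-vertex graph $G$ with no $k$-vertex path, form $G'$ on $V(G)\cup\{v\}$ by joining a new vertex $v$ to every vertex of $G$. Then $G'$ is $2$-connected (deleting $v$ leaves the connected graph $G$, while deleting any vertex of $G$ leaves $v$ adjacent to all remaining vertices), has $n+1$ vertices and $e(G)+n$ edges, and contains no cycle of length at least $k+1$: any such cycle avoiding $v$ already yields a $k$-vertex path in $G$, and any such cycle through $v$ yields one after deleting $v$. I would then apply Theorem~\ref{t3} to $G'$ with parameters $n'=n+1$ and $k'=k+1\in\{2t+1,2t+2\}$ (so that $n'\geq 3t$ matches $n\geq 3t-1$); the generic bound $e(G')\leq h(n+1,k+1,t-1)$ gives the desired inequality $e(G)\leq h(n+1,k+1,t-1)-n$, and it remains to translate the two exceptional structures back through the removal of $v$.

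For exception (a) of Theorem~\ref{t3} ($k=2t$, $k\neq 6$, and $G'\subseteq H_{n+1,2t+1,t}$), the key observation is that $v$ has degree $n$ in $G'$, whereas in $H_{n+1,2t+1,t}$ the only vertices of degree $n$ are those in the central $t$-set $A$ (each vertex of $B\cup C$ has degree $t$). So $v$ can be taken to lie in $A$, and $G=G'-v$ then embeds into the graph obtained from $H_{n+1,2t+1,t}$ by deleting one vertex of $A$. Relabeling one vertex of $B$ into the $C$-part of $H_{n,2t,t-1}$ (which has size $2$) exhibits $G\subseteq H_{n,2t,t-1}$, giving conclusion (a) of Theorem~\ref{t4'}.

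For exception (b) (with $k+1\in\{2t+2,7\}$ and $G'-A$ a star forest, $|A|\leq t$), the treatment splits on whether $v\in A$. If $v\in A$, set $A':=A\setminus\{v\}$; then $|A'|\leq t-1$ and $G-A'=G'-A$ is already a star forest. If $v\notin A$, then in the star forest $G'-A$ the vertex $v$ has degree $n-|A|\geq n-t\geq 2t-1\geq 3$, so $v$ must be the center of its star; this forces every other vertex of $G'-A$ to be a leaf of $v$, so $V(G)\setminus A$ is independent in $G$. In particular $A$ is nonempty (otherwise $G$ would be edgeless, contradicting connectedness on $n\geq 5$ vertices), and choosing any $a\in A$ and setting $A':=A\setminus\{a\}$ makes $G-A'$ a star centered at $a$ together with isolated vertices, hence a star forest with $|A'|\leq t-1$. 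The only real subtlety is this last subcase, where $n\geq 3t-1$ is used precisely to force $v$ to be a star center rather than a leaf; the remainder is routine bookkeeping through the $-v$ operation.
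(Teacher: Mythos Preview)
Your proof is correct and follows exactly the paper's approach: add a universal vertex $v$ to obtain a $2$-connected graph $G'$ on $n+1$ vertices with no cycle of length at least $k+1$, then apply Theorem~\ref{t3}. The paper states only that ``$G'$ satisfies (a) or (b) in Theorem~\ref{t3}, which means $G$ satisfies (a) or (b) in Theorem~\ref{t4'}''; you have supplied the verification of that last implication (in particular the degree argument forcing $v\in A$ in case~(a), and the case split on $v\in A$ versus $v\notin A$ in case~(b)) which the paper leaves implicit.
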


Indeed, let $G'$ be obtained from an $n$-vertex connected graph $G$ with more than $h(n + 1,k+1,t-1) - n$ edges by adding a vertex adjacent to all vertices in $G$.
Then $G'$ is 2-connected and
$G'$ has more than $h(n+1,k+1,t-1)$ edges. If $G$ has no $k$-vertex path, then $G'$ has no cycle of length at least $k + 1$.
By Theorem \ref{t3}, $G'$ satisfies (a) or (b) in Theorem \ref{t3}, which means $G$ satisfies (a) or (b) in Theorem \ref{t4'}.
Repeating this argument, Corollary~\ref{3con} implies the following.

\begin{cor}\label{3conp}
Let $k \geq 11$, $t = \lfloor \frac{k-1}{2}\rfloor$, and $n \geq \frac{3k}{2}$. If $G$ is an $n$-vertex $2$-connected graph with no $k$-vertex paths,
then $e(G) \leq h(n+1,k+1,t-1)-n$ unless $G \subseteq  H_{n,k,t-1}$.
\end{cor}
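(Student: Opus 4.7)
The plan is to reduce the problem for paths in a $2$-connected graph to the problem for cycles in a $3$-connected graph by adjoining a universal vertex, exactly as in the sketch for Theorem~\ref{t4'} from Theorem~\ref{t3} given just before the statement, and then to apply Corollary~\ref{3con}. Let $G$ be $2$-connected on $n$ vertices with no $k$-vertex path, and let $G'$ be the graph obtained from $G$ by adjoining a single new vertex $v$ joined to every vertex of $G$, so $|V(G')|=n+1$ and $e(G')=e(G)+n$. Two structural facts are immediate: (i) $G'$ is $3$-connected, since deleting $v$ and one other vertex leaves $G$ minus a vertex, still connected by the $2$-connectivity of $G$, while deleting two vertices other than $v$ leaves $v$ adjacent to every remaining vertex; (ii) $G'$ has no cycle of length at least $k+1$, for such a cycle either avoids $v$ and lives in $G$ (producing a $P_k\subseteq G$) or passes through $v$ and, with $v$ deleted, becomes a path on at least $k$ vertices of $G$.

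Assuming $e(G) > h(n+1,k+1,t-1)-n$ gives $e(G') > h(n+1,k+1,t-1)$. The hypotheses $k\geq 11$ and $n \geq \tfrac{3k}{2}$ ensure $k+1 \geq 11$ and (with minor parity bookkeeping) $n+1 \geq \tfrac{3(k+1)}{2}$, so Corollary~\ref{3con} applies to $G'$ with parameter $k'=k+1$. When $k=2t+1$ is odd we have $\lfloor k/2\rfloor=t$, so Corollary~\ref{3con} supplies the bound $h(n+1,k+1,t-1)$ on $e(G')$; violating it forces $G' \subseteq H_{n+1,k+1,t}$. Because $v$ has degree $n$ in $G'$ and only vertices of the dominating part $A$ of $H_{n+1,k+1,t}$ attain degree $n$, we must have $v\in A$, and therefore $G \subseteq H_{n+1,k+1,t}-v$.

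The one step I expect to require direct verification, and which I anticipate as the main technical obstacle, is the embedding $H_{n+1,k+1,t}-v \subseteq H_{n,k,t-1}$ for $v$ in the dominating part. For $k=2t+1$, the graph $H_{n+1,k+1,t}-v$ consists of a clique $K_{t+1}$ on $(A\setminus\{v\})\cup C$ (of sizes $t-1$ and $2$) together with $n-t-1$ further vertices each adjacent to the same $t-1$ vertices of $A\setminus\{v\}$; meanwhile $H_{n,k,t-1}$ has a clique $K_{t+2}$ and $n-t-2$ further vertices each adjacent to the same $t-1$ vertices of its $A$-part. Matching the $(t-1)$-parts, mapping the source $C$ (of size $2$) to two of the three target $C$-vertices, and mapping the source $B$ (of size $n-t-1$) onto the target $B$ (of size $n-t-2$) together with the remaining target $C$-vertex yields the desired embedding, since every required adjacency is present in $H_{n,k,t-1}$. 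The parity-even case $k=2t+2$ proceeds along the same lines after a shifted bookkeeping of the sizes of the parts, which must be carried out to close the argument.
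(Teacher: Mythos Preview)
Your approach---adjoin a universal vertex $v$ to obtain a $3$-connected $G'$ with $c(G')<k+1$ and apply Corollary~\ref{3con}---is exactly the paper's, which says only ``Repeating this argument, Corollary~\ref{3con} implies the following.'' For odd $k$ your details are correct; the embedding you single out as the main obstacle is in fact immediate: deleting an $A$-vertex from $H_{n+1,k+1,t}$ yields exactly $H_{n,k-1,t-1}$ (the parameters $a$ and $k$ each drop, keeping $|C|=k-2a$ fixed), and $H_{n,k-1,t-1}\subseteq H_{n,k,t-1}$ since the latter differs only by moving one $B$-vertex into the clique $C$.

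One genuine caution on the even case you defer: when $k=2t+2$, the parameter in Corollary~\ref{3con} for $k'=k+1$ is $t'=\lfloor k/2\rfloor=t+1$, not $t$. So Corollary~\ref{3con} yields $e(G')\leq h(n+1,k+1,t)$ with exceptional graph $H_{n+1,k+1,t+1}$, not $h(n+1,k+1,t-1)$ and $H_{n+1,k+1,t}$ as the stated corollary requires. Your phrase ``shifted bookkeeping of the sizes of the parts'' does not address this: the bound you obtain is strictly weaker than the one claimed, and deleting $v$ from $H_{n+1,k+1,t+1}$ gives $H_{n,k-1,t}\subseteq H_{n,k,t}$, not $H_{n,k,t-1}$. (The boundary check $n+1\geq 3(k+1)/2$ also fails by one when $k$ is even and $n=3k/2$.) This appears to be a slip in the paper's statement for even $k$ rather than a flaw in your method, but you should not claim the even case ``proceeds along the same lines'' without confronting it.
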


\bigskip

 {\bf Organization.} The proof of Theorem \ref{t3}  will use a number of classical results listed in Section 2 and some lemmas on contractions proved in Section 3.
 Then in Section~\ref{maintheorem} we describe several families of extremal graphs and state and prove a more technical Theorem~\ref{main}, implying Theorem~\ref{t3}
 for $k\geq 9$.  Finally,
 in Section~\ref{secshort} we prove the analog of our technical  Theorem~\ref{main}
 for $4\leq k\leq 8$. In particular, we describe   {\em all} $2$-connected graphs with no cycles of length at least $6$.

 \bigskip

{\bf Notation.} We use standard notation of graph theory. Given a simple graph $G=(V,E)$, the {\em neighborhood} of $v\in V$, i.e. the set of vertices adjacent
to $v$, is denoted by $N_G(v)$ or $N(v)$ for short,
and the {\em closed neighborhood} is $N[v] := N(v) \cup \{v\}$. The {\em degree} of vertex $v$ is $d_G(v) := |N_G(v)|$.
Given $A\subseteq V$ we also use $N_G(v,A)$ for $N(v) \cap A$, $d(v,A)$ for $|N(v) \cap A|$, and $N(A) := \bigcup_{v\in A} N(v) \backslash A$. For an edge $xy$ in $G$, let $T_G(xy)$ denote the number of triangles containing $xy$
 and $T(G):= \min \{T_G(xy): xy\in E\}$. The minimum degree of $G$ is denoted by $\delta(G)$. For an edge $xy$ in $G$, $G/xy$ denotes the graph obtained from $G$ by contracting $xy$.
We frequently use $x* y$ for the new vertex. The length of the longest cycle in $G$ is denoted by $c(G)$, and $e(G) := |E|$.
Denote by $K_n$ the complete $n$-vertex graph, and $K(A,B)$ the complete bipartite graph with parts $A$ and $B$ ($A\cap B=\emptyset$).
Given vertex-disjoint graphs $G_1=(V_1,E_1)$ and $G_2=(V_2,E_2)$,  the graph $G_1 + G_2$ has vertex set $V_1\cup V_2$ and edge set $E_1\cup E_2\cup E(K(V_1,V_2))$.
If $G$ is a graph, then $\overline{G}$ denotes the complement of $G$ and for a positive integer $\ell$, $\ell G$ denotes the graph consisting of $\ell$ components, each isomorphic to $G$. For disjoint sets $A,B \subseteq V(G)$, let $G(A,B)$ denote the bipartite graph with parts $A$ and $B$ consisting of all edges of $G$ between $A$ and $B$,
and for $A \subseteq V(G)$, let $G[A]$ denote the subgraph induced by $A$.

\section{Classical theorems}

We require a number of theorems on long paths and cycles in dense graphs. The following is an extension to 2-connected
graphs of the well-known fact that an $n$-vertex non-hamiltonian graph has at most ${n - 1 \choose 2}+1$ edges:

\begin{thm}[Erd\H{o}s~\cite{Erd62}]\label{th:er}
Let $d\geq 1$ and $n>2d$ be integers, and
\[ \ell_{n,d}:=\max\left\{\binom{n-d}{2}+d^2,\binom{\lceil\frac{n+1}{2}\rceil}{2}+{\Big\lfloor\frac{n-1}{2}\Big\rfloor}^2\right\}.\]
Then every $n$-vertex graph $G$ with $\delta(G)\geq d$ and $e(G) > \ell_{n,d}$ is hamiltonian.
  \end{thm}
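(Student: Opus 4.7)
The most natural route is to deduce Theorem~\ref{th:er} from Chv\'atal's degree--sequence condition for hamiltonicity, which says that if the ordered degree sequence $d_1\le d_2\le\cdots\le d_n$ of an $n$-vertex graph satisfies, for every integer $i<n/2$, either $d_i>i$ or $d_{n-i}\ge n-i$, then the graph is hamiltonian. Contrapositively, if $G$ is not hamiltonian there is an integer $i$ with $1\le i<n/2$, $d_i\le i$, and $d_{n-i}\le n-i-1$.

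Assume for contradiction that $G$ is non-hamiltonian and pick such an $i$. Since $d_i\ge\delta(G)\ge d$, one has $d\le i\le\lfloor(n-1)/2\rfloor$. Bounding the smallest $i$ degrees by $i$, the middle $n-2i$ degrees by $n-i-1$, and the largest $i$ degrees by $n-1$ gives
\[
 2\,e(G)\;\le\; i\cdot i+(n-2i)(n-i-1)+i(n-1)\;=\;2\left(\binom{n-i}{2}+i^2\right),
\]
so $e(G)\le g(i)$, where $g(i):=\binom{n-i}{2}+i^2=\tfrac{3}{2}i^2-(n-\tfrac{1}{2})i+\binom{n}{2}$. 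As $g$ is a convex quadratic in $i$, its maximum over the integer interval $d\le i\le\lfloor(n-1)/2\rfloor$ is attained at one of the two endpoints. At $i=d$ this value is $\binom{n-d}{2}+d^2$, and a short parity check at $i=\lfloor(n-1)/2\rfloor$ yields $\binom{\lceil(n+1)/2\rceil}{2}+\lfloor(n-1)/2\rfloor^2$. Hence $e(G)\le\ell_{n,d}$, contradicting the hypothesis $e(G)>\ell_{n,d}$.

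I expect the only real friction to be cosmetic bookkeeping at the second endpoint, where one splits $n$ into even and odd to recover the expression $\binom{\lceil(n+1)/2\rceil}{2}+\lfloor(n-1)/2\rfloor^2$ exactly. If one prefers to stay strictly within Erd\H{o}s's original 1962 framework and avoid Chv\'atal's later theorem, the same index $i$ can be produced via the Bondy--Chv\'atal closure: passing to the closure preserves (non)hamiltonicity, and in the closure every non-adjacent pair has degree sum at most $n-1$, which after sorting degrees yields the split $d_i\le i$, $d_{n-i}\le n-i-1$ used above, so the remainder of the argument goes through verbatim.
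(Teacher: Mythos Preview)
The paper does not give its own proof of this theorem; it is quoted as a classical result of Erd\H{o}s and used as a black box. Your derivation via Chv\'atal's degree--sequence criterion (stated in the paper as Theorem~\ref{t1}) is correct: the contrapositive of Chv\'atal produces an index $i$ with $d\le i\le\lfloor(n-1)/2\rfloor$, the degree-sum bound $2e(G)\le i^2+(n-2i)(n-i-1)+i(n-1)=2\bigl(\binom{n-i}{2}+i^2\bigr)$ is an identity, and convexity of $g(i)=\binom{n-i}{2}+i^2$ pushes the maximum to an endpoint, where the parity check at $i=\lfloor(n-1)/2\rfloor$ recovers $\binom{\lceil(n+1)/2\rceil}{2}+\lfloor(n-1)/2\rfloor^2$ in both cases. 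This is the standard modern proof and fits cleanly with the tools the paper already assumes.
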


The bound on $\ell_{n,d}$ is sharp, due to the graphs $H_{n,n,2}$
and $H_{n,n,\lfloor (n-1)/2\rfloor}$. Since $\delta(G)\geq 2$ for every $2$-connected $G$, this has the following corollary.

\begin{thm}[Erd\H{o}s~\cite{Erd62}]\label{th:nonham}
If $n\geq 5$ and $G$ is an $n$-vertex $2$-connected non-hamiltonian graph, then
  $e(G)\leq \binom{n-2}{2}+4$, with equality only for $G=H_{n,n,2}$.
  \end{thm}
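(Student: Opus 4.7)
The plan is to obtain Theorem~\ref{th:nonham} as the specialization of Theorem~\ref{th:er} to $d=2$. Since $G$ is $2$-connected we have $\delta(G)\geq 2$, and the hypothesis $n\geq 5>2d$ places us in the scope of Theorem~\ref{th:er}. Its contrapositive says that any non-Hamiltonian $G$ with $\delta(G)\geq 2$ satisfies
\[
e(G) \;\leq\; \ell_{n,2} \;=\; \max\left\{\binom{n-2}{2}+4,\ \binom{\lceil (n+1)/2\rceil}{2}+\lfloor (n-1)/2 \rfloor^{2}\right\}.
\]

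The only remaining computation is the verification that the first term in this maximum is the larger one, so that $\ell_{n,2} = \binom{n-2}{2}+4$. Expanding both expressions, the first has leading term $n^{2}/2$ while the second has leading term $3n^{2}/8$, so their difference is a positive quadratic in $n$; splitting into the parities of $n$ gives a one-line verification. This yields the edge bound $e(G) \leq \binom{n-2}{2}+4$.

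For the equality case the plan is to invoke the equality clause of Theorem~\ref{th:er}: the only graphs attaining $e(G)=\ell_{n,2}$ are $H_{n,n,2}$ and $H_{n,n,\lfloor(n-1)/2\rfloor}$. The edge count of the latter equals the (strictly smaller) second term in $\ell_{n,2}$, so only $H_{n,n,2}$ achieves the bound, giving uniqueness.

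The one place requiring any care is the small-$n$ bookkeeping: for a few small values of $n$ the two candidates in $\ell_{n,2}$ tie and $H_{n,n,\lfloor(n-1)/2\rfloor}$ coincides with (or is very close to) $H_{n,n,2}$, so one would either exclude these in the uniqueness statement or check them by inspection. Beyond this, the argument is essentially a one-line corollary of Theorem~\ref{th:er}.
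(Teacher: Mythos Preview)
Your approach matches the paper's exactly: the paper derives Theorem~\ref{th:nonham} from Theorem~\ref{th:er} by the single sentence ``Since $\delta(G)\geq 2$ for every $2$-connected $G$, this has the following corollary,'' and you have correctly identified this as the intended derivation.

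That said, there are two genuine gaps in your write-up. First, your claim that the first term in $\ell_{n,2}$ dominates for all $n\geq 5$ is false, not merely a matter of bookkeeping ties. Writing $n=2m+1$ (odd) one finds
\[
\binom{n-2}{2}+4 \;-\; \Bigl(\binom{m+1}{2}+m^{2}\Bigr) \;=\; \frac{(m-2)(m-5)}{2},
\]
which is \emph{strictly negative} for $m\in\{3,4\}$, i.e.\ $n\in\{7,9\}$; the terms are $14<15$ and $25<26$ respectively. Indeed $H_{7,7,3}$ is a $2$-connected non-hamiltonian graph on $7$ vertices with $15>\binom{5}{2}+4$ edges, so the statement as printed actually fails for $n=7$ and $n=9$; the corollary only goes through cleanly for $n\geq 10$ (and $n\in\{5,6,8\}$). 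Your asymptotic heuristic is fine, but ``splitting into parities gives a one-line verification'' is wrong here.

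Second, your uniqueness argument invokes an equality characterization in Theorem~\ref{th:er} that the paper does not state: Theorem~\ref{th:er} asserts only that $e(G)>\ell_{n,d}$ forces hamiltonicity and that the bound is sharp, not that $H_{n,n,2}$ and $H_{n,n,\lfloor(n-1)/2\rfloor}$ are the \emph{only} extremal graphs. To get the ``equality only for $H_{n,n,2}$'' clause you need either to cite the full Erd\H{o}s result (which does include the characterization) or to supply a separate argument.
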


It is well-known that every graph of minimum degree at least $d \geq 2$ contains a cycle of length at least $d + 1$.
A stronger statement was proved by Dirac for 2-connected graphs:

\begin{thm}[Dirac~\cite{Dir}]\label{th:dirac}
If $G$ is $2$-connected then $c(G)\geq \min \{n,2\delta\}$.
  \end{thm}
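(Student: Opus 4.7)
The approach is a longest-path analysis with 2-connectivity invoked at the end. Let $P = v_0 v_1 \ldots v_\ell$ be a longest path in $G$; by maximality, $N(v_0) \cup N(v_\ell) \subseteq V(P)$.

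A Pósa-style rotation dichotomy now splits the analysis. Either (i) there is an index $i \in \{1,\ldots,\ell\}$ with $v_0 v_i \in E$ and $v_\ell v_{i-1} \in E$, in which case $v_0 v_i v_{i+1}\cdots v_\ell v_{i-1}\cdots v_1 v_0$ is a cycle using every vertex of $P$; or (ii) the sets $N(v_0)$ and $\{j+1 : v_j \in N(v_\ell)\}$ are disjoint subsets of $\{1,\ldots,\ell\}$, so $\ell \ge d(v_0)+d(v_\ell) \ge 2\delta$. In case (i) the cycle has length $\ell+1$; if $\ell+1<n$ then, since $G$ is connected, some vertex off the cycle is adjacent to it, yielding a path longer than $P$, a contradiction. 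Hence $\ell+1=n$ and $c(G)=n$.

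In case (ii) the path is long, but may fail to close on its own into a cycle of length $\ge 2\delta$. The key observation is that, for any $i \in N(v_0)$ and $j \in N(v_\ell)$ with $j<i$, the cycle $v_0 v_1\cdots v_j v_\ell v_{\ell-1}\cdots v_i v_0$ has length $\ell - (i-j) + 2$, so (since $\ell\ge 2\delta$) it suffices to produce such a pair with $i-j \le 2$. The Pósa condition already excludes $i-j=1$. A counting argument based on $|N(v_0)|,|N(v_\ell)| \ge \delta$ together with the disjointness of $N(v_0)$, $N(v_\ell)+1$, $N(v_\ell)+2$ inside $\{1,\ldots,\ell+1\}$ then produces a pair with $i-j=2$ unless the chord sets $N(v_0)$ and $N(v_\ell)$ are jammed into a highly rigid configuration meeting in a single ``pivot'' vertex $v_d$. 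In that exceptional configuration 2-connectivity finally enters: $G-v_d$ must be connected, so there must exist a chord $v_a v_b$ of $P$ with $a<d<b$. Combining this chord with the known chords at $v_0$ and $v_\ell$ produces a cycle of length at least $2\delta$.

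The main obstacle is the last step: one has to verify that the bypass chord forced by 2-connectivity, combined with the chords at $v_0$ and $v_\ell$ (which are themselves tightly constrained by the Pósa condition), always yields a cycle of length at least $2\delta$. This reduces to a short case analysis on the possible positions of $a$ and $b$ relative to $N(v_0) \cup N(v_\ell)$; the remaining steps are direct consequences of the rotation dichotomy and the maximality of $P$.
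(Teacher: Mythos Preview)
The paper does not prove Theorem~\ref{th:dirac}; it is quoted as a classical result of Dirac and used as a black box. So there is no ``paper's own proof'' to compare against. That said, your plan deserves scrutiny on its own.

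Case~(i) is fine and standard. The difficulty is entirely in case~(ii), and there your outline has a real gap. The counting claim that $N(v_0)$, $N(v_\ell)+1$, $N(v_\ell)+2$ are pairwise disjoint is wrong: the last two sets overlap whenever $N(v_\ell)$ contains two consecutive indices, so you cannot conclude $3\delta\le \ell+1$ or anything close to it. Consequently the assertion that the only obstruction is a ``highly rigid configuration meeting in a single pivot vertex $v_d$'' is unfounded. For a concrete example, take $\ell=2\delta+5$ with $N(v_0)=\{v_1,\dots,v_\delta\}$ and $N(v_\ell)=\{v_{\delta+3},\dots,v_{2\delta+2}\}$: here there is \emph{no} pair $(i,j)$ with $j<i$ at all, let alone one with $i-j\le 2$, and there is no single pivot vertex whose removal separates the two neighbourhoods. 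Your final step (``one extra chord from $2$-connectivity plus a short case analysis'') is therefore being asked to carry far more weight than you acknowledge; in effect you are trying to prove the special case $d(x,P)+d(y,P)\ge 2\delta$ of Kopylov's Theorem~\ref{le:kop}, which the paper cites as a separate nontrivial result precisely because it does not reduce to a quick chord-count.

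If you want to rescue the approach, the clean route is to invoke Theorem~\ref{le:kop} directly: for a longest path $P$ on $\ell+1$ vertices with endpoints $x,y$, all neighbours of $x,y$ lie on $P$, so $d(x,P)+d(y,P)=d(x)+d(y)\ge 2\delta$, and Theorem~\ref{le:kop} gives $c(G)\ge\min\{\ell+1,2\delta\}$; combined with your case~(i) (which forces $\ell+1=n$ when it applies) this yields Dirac's bound immediately. Proving Theorem~\ref{le:kop} itself, however, requires a more careful use of $2$-connectivity than a single bypass chord.
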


This theorem was strengthened as follows by Kopylov~\cite{Kopy}, based on ideas of P\'{o}sa~\cite{Posa62}:

\begin{thm}[Kopylov~\cite{Kopy}]\label{le:kop}
If $G$ is $2$-connected, $P$ is an $x,y$-path of $\ell$ vertices, then $c(G)\geq \min \{\ell,d(x,P)+ d(y,P)\}$.
\end{thm}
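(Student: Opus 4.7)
Write $P = v_1 v_2 \cdots v_\ell$ with $x = v_1$ and $y = v_\ell$, and set $I := \{i : v_1 v_i \in E(G)\}$, $J := \{j : v_j v_\ell \in E(G)\}$, so that $d(x,P) + d(y,P) = |I| + |J| =: s$. The plan is to combine a P\'{o}sa-type rotation with $2$-connectivity. The first observation is the classical P\'{o}sa rotation: if there exists $i \in I$ with $i - 1 \in J$, then
\[
v_1\,v_i\,v_{i+1}\cdots v_\ell\,v_{i-1}\,v_{i-2}\cdots v_2\,v_1
\]
is a cycle through all $\ell$ vertices of $P$, so $c(G) \geq \ell \geq \min\{\ell,s\}$. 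Hence we may assume that $I$ and $J+1 := \{j+1 : j\in J\}$ are disjoint subsets of $\{2,\ldots,\ell\}$; in particular $s \leq \ell - 1$, and the task reduces to exhibiting a cycle of length at least $s$.

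For this second part I would examine the extreme indices $m := \max(I \cup (J+1))$ and $m' := \min(I \cup (J+1))$. Since $|I\cup(J+1)| = s$ and these integers lie in $\{m',\ldots,m\} \subseteq \{2,\ldots,\ell\}$, we get $m \geq s + 1$ and $m' \leq \ell - s + 1$. If $m \in I$, then closing the subpath $v_1 v_2 \cdots v_m$ by the chord $v_1 v_m$ yields a cycle of length $m \geq s + 1$. Symmetrically, if $m' \in J+1$ then $m'-1 \in J$ and the chord $v_{m'-1} v_\ell$ closes $v_{m'-1} v_{m'} \cdots v_\ell$ into a cycle of length $\ell - m' + 2 \geq s+1$. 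Thus the only remaining case is the \emph{separated-fans} case, where $m \in (J+1) \setminus I$ and $m' \in I \setminus (J+1)$.

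In the separated-fans case the chords $v_1 v_{m'}$ and $v_{m-1} v_\ell$ straddle a ``middle'' subpath $v_{m'} v_{m'+1} \cdots v_{m-1}$ of $P$ of length $m - m' \geq s - 1$, while $x$'s neighbors on $P$ lie in the left portion and $y$'s in the right portion. Here the $2$-connectivity of $G$ is essential: by Menger's theorem, between $v_{m'}$ and $v_{m-1}$ there are two internally disjoint paths in $G$, one of which can be taken to be the middle subpath itself, so there is an alternate $v_{m'},v_{m-1}$-path $Q$ in $G$ internally disjoint from it. Splicing $Q$ with the chords $v_1 v_{m'}$, $v_{m-1} v_\ell$ and an appropriate back-route from $v_\ell$ to $v_1$ (using the end-segments of $P$) produces a cycle containing the two chords, the path $Q$, and the middle subpath; careful bookkeeping then shows the resulting cycle has length at least $s$.

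The main obstacle is the separated-fans case. The back-route from $v_\ell$ to $v_1$ is not free: the bridging path $Q$ supplied by Menger's theorem may share vertices with other segments of $P$, and in small examples $v_1$ and $v_\ell$ may have no neighbors outside $V(P)$ at all. A clean way around this, in the spirit of Kopylov's original argument, is to combine the rotation idea with an extremal choice of $P$ among all $x,y$-paths on the same vertex set --- specifically, choosing $P$ to maximize $|I|+|J|$. Any rotation (using a chord $v_1 v_i$ with $i\in I$ or $v_j v_\ell$ with $j\in J$) that produces a new $x',y'$-path with larger fan-sum would contradict maximality; one then shows that in the separated-fans case either some rotation raises $|I|+|J|$, or a cycle of length $\geq s$ is already forced by the existing chord structure.
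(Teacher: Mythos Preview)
The paper does not prove this theorem; it is quoted in Section~2 as a classical result of Kopylov and used as a black box. So there is no ``paper's own proof'' to compare against. That said, your sketch has a real gap.

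Your case split in the second part is vacuous. Since $v_1v_2\in E(P)$ and $v_{\ell-1}v_\ell\in E(P)$, you always have $2\in I$ and $\ell\in J+1$; hence $m'=\min(I\cup(J+1))=2$ and $m=\max(I\cup(J+1))=\ell$ \emph{automatically}. The ``easy'' sub-cases $m\in I$ or $m'\in J+1$ both reduce to $v_1v_\ell\in E(G)$, which was already disposed of by the P\'osa rotation. So after the rotation step you are \emph{always} in the separated-fans case, and the only nontrivial chords you have produced are the trivial path edges $v_1v_2$ and $v_{\ell-1}v_\ell$. Your middle subpath $v_{m'}\cdots v_{m-1}$ is then just $v_2\cdots v_{\ell-1}$, and the ``splicing'' you describe does not yield anything beyond $P$ itself.

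The appeal to Menger in the separated-fans case is also not sound as written: the alternate $v_{m'},v_{m-1}$-path $Q$ guaranteed by $2$-connectivity need not be internally disjoint from the end-segments of $P$ you want to use as a back-route, so the ``careful bookkeeping'' cannot be completed in general. (Try $G=K_{t,t+1}$ with $P$ a Hamiltonian path whose endpoints lie in the larger side: here $s=2t$, $\ell=2t+1$, the bound $c(G)\ge s$ is tight, and every alternate path $Q$ meets the end-segments.) The genuine content of Kopylov's argument is precisely how $2$-connectivity is combined with the fan structure to manufacture a cycle of length at least $|I|+|J|$; neither the min/max trick nor the vague extremal-rotation suggestion at the end supplies that missing idea. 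If you want to reconstruct the proof, a workable route is to take $p=\max I$, $q=\min J$, and argue separately according to whether $p>q$ (overlapping fans: build the cycle directly from the two chords $v_1v_p$ and $v_qv_\ell$) or $p\le q$ (here $2$-connectivity is used to link the two ends through $G$, not merely through $P$).
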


\begin{thm}[Chv\' atal~\cite{Ch}]\label{t1} Let $n\geq 3$ and $G$ be an $n$-vertex graph with vertex degrees $d_1\leq d_2\leq\ldots\leq d_n$.
If $G$ is not hamiltonian, then there is some $i<n/2$ such that $d_i\leq i$ and $d_{n-i}<n-i$.
\end{thm}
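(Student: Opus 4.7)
The plan is to prove Theorem~\ref{t1} via the Bondy--Chv\'atal closure, exploiting a maximum degree-sum non-adjacent pair in two symmetric ways. First I replace $G$ by its \emph{closure} $\mathrm{cl}(G)$, obtained by iteratively joining any two non-adjacent vertices $x,y$ with $d(x)+d(y) \geq n$. By the Bondy--Chv\'atal closure theorem, $G$ is Hamiltonian if and only if $\mathrm{cl}(G)$ is, and since closing only adds edges the sorted degree sequence of $\mathrm{cl}(G)$ dominates that of $G$ term by term. Hence any index $i<n/2$ witnessing the desired inequalities $d_i\leq i$ and $d_{n-i}<n-i$ for $\mathrm{cl}(G)$ also witnesses them for $G$, and I may assume $G=\mathrm{cl}(G)$; then every non-adjacent pair has degree sum at most $n-1$.

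Next, since $n\geq 3$ and $G$ is non-Hamiltonian, $G\neq K_n$, so I can pick a non-adjacent pair $\{u,v\}$ maximising $d(u)+d(v)$, labelled so that $d(u)\leq d(v)$. Setting $i:=d(u)$, the closure property gives $d(u)+d(v)\leq n-1$, so $i\leq (n-1)/2<n/2$ and $d(v)\leq n-1-i$. For any $w\in V\setminus N[v]$ the pair $\{w,v\}$ is non-adjacent, so by the maximal choice of $\{u,v\}$ we have $d(w)+d(v)\leq d(u)+d(v)$, i.e.\ $d(w)\leq i$. Since $|V\setminus N[v]|=n-1-d(v)\geq i$, this already provides $i$ vertices of degree at most $i$, and therefore $d_i\leq i$.

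The same maximality bound applied at the other end of the pair gives $d(w)\leq d(v)$ for every $w\in V\setminus N[u]$, and $d(v)\leq n-1-i<n-i$. Consequently any vertex $w$ with $d(w)\geq n-i$ must lie in $N[u]$; but it cannot be $u$ itself, since $d(u)=i<n-i$. Thus $S:=\{w\in V: d(w)\geq n-i\}\subseteq N(u)$, so $|S|\leq d(u)=i$. This is exactly $d_{n-i}<n-i$, and together with $d_i\leq i$ and $i<n/2$ it is the conclusion of the theorem.

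Conceptually, the argument has no single hard step once the closure reduction is performed; the key trick is to invoke the maximality of the chosen pair \emph{twice}: once at $v$, producing at least $i$ low-degree vertices and certifying $d_i\leq i$, and once at $u$, confining every vertex of degree at least $n-i$ to $N(u)$ and hence bounding their total number by $i$. Without this symmetric second use it is not clear how to control the high-degree end of the sequence, so that is the step I would make sure to get right.
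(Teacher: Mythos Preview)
The paper does not give its own proof of Theorem~\ref{t1}: it is quoted in Section~2 as a classical result of Chv\'atal and used as a black box. So there is nothing in the paper to compare your argument against.

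Your proof is correct and is essentially the standard proof of Chv\'atal's theorem. Passing to the $n$-closure is exactly the right reduction, and the double use of the maximality of the non-adjacent pair $\{u,v\}$ --- once with $v$ fixed to force $i$ vertices of degree $\le i$, once with $u$ fixed to confine all vertices of degree $\ge n-i$ to $N(u)$ --- is precisely the mechanism behind the original argument. One tiny edge case: you set $i:=d(u)$, and nothing in the hypotheses prevents $d(u)=0$ (an isolated vertex in a disconnected $G$), in which case $d_i$ is undefined. This is trivially patched by taking $i=1$ in that situation (then $d_1=0\le 1$ and no vertex can have degree $n-1$, so $d_{n-1}<n-1$), but you may want to add a clause for it.
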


The {\em $k$-closure} of a graph $G$ is the unique smallest graph $H$ of order $n:=|V(G)|$ such that $G\subseteq H$ and
 $d_H(u)+d_H(v)<k$ for all $uv\notin E(H)$. The $k$-closure of $G$ is denoted by $Cl_k(G)$, and can be obtained from $G$ by a recursive procedure which consists of joining nonadjacent vertices with degree-sum at least $k$.

\begin{thm}[Bondy and Chv\'atal~\cite{BonChv}]\label{th:hamclosure}
If $Cl_n(G)$ is hamiltonian, then so is $G$. Therefore if $Cl_n(G)=K_n$, $n\geq 3$, then $G$ is hamiltonian.
  \end{thm}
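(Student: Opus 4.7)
The plan is to reduce the theorem to the single-step \emph{closure lemma}: if $u,v$ are non-adjacent vertices of an $n$-vertex graph $H$ with $d_H(u)+d_H(v)\geq n$, then $H$ is hamiltonian if and only if $H+uv$ is hamiltonian. Granting this, note that $Cl_n(G)$ is built from $G$ through a finite sequence of edge additions $G=G_0\subset G_1\subset\cdots\subset G_m=Cl_n(G)$ in which each $G_{j+1}$ differs from $G_j$ by one edge joining a pair of non-adjacent vertices whose $G_j$-degrees sum to at least $n$. A downward induction on $j$, using the closure lemma at each step, transfers hamiltonicity from $Cl_n(G)$ all the way down to $G=G_0$. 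The second assertion then follows immediately, since $K_n$ is trivially hamiltonian for $n\geq 3$.

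To prove the closure lemma, the direction ``$H$ hamiltonian $\Rightarrow H+uv$ hamiltonian'' is trivial, so I would assume for contradiction that $H+uv$ is hamiltonian while $H$ is not. Any Hamilton cycle of $H+uv$ must use the new edge $uv$ (otherwise it would already be a Hamilton cycle of $H$), so after deleting $uv$ one obtains a Hamilton $u,v$-path in $H$, which I label $v_1v_2\cdots v_n$ with $v_1=u$ and $v_n=v$. Define
\[ S=\{\,i\in\{1,\dots,n-1\}:uv_{i+1}\in E(H)\,\},\qquad T=\{\,i\in\{1,\dots,n-1\}:v_iv\in E(H)\,\}, \]
so that $|S|=d_H(u)$ and $|T|=d_H(v)$, using that $uv\notin E(H)$.

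The one non-routine step is the P\'osa-style rotation that produces the contradiction. Since $|S|+|T|\geq n>|\{1,\dots,n-1\}|$, pigeonhole yields an index $i\in S\cap T$; note $i\notin\{1,n-1\}$ because $uv\notin E(H)$. The edges $v_iv_n\in E(H)$ and $v_{i+1}v_1\in E(H)$ then allow me to form the cycle
\[ v_1\,v_2\cdots v_i\,v_n\,v_{n-1}\cdots v_{i+1}\,v_1, \]
which is a Hamilton cycle of $H$, contradicting the choice of $H$. This rotation argument is really the entire content of the proof; everything else is just the inductive unwinding of the closure construction described in the first paragraph, and the only place one has to be careful is verifying that the chosen index $i$ avoids the endpoints so that the rerouted cycle is well-defined.
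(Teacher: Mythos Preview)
Your proof is correct and is exactly the standard Bondy--Chv\'atal/Ore argument. Note, however, that the paper does not give its own proof of this theorem: it is listed in Section~2 among classical results and simply cited to~\cite{BonChv}. So there is no paper proof to compare against; your write-up supplies the well-known argument that the paper takes for granted.
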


Concerning long paths between prescribed vertices in a graph, Lov\'{a}sz~\cite{Lovasz} showed that if $G$ is a 2-connected graph
in which every vertex other than $u$ and $v$ has degree at least $k$, then there is a $u,v$-path of length at least $k + 1$. This result was strengthened
by Enomoto.  The following theorem  immediately follows from  Corollary~1 in~\cite{En}:

\begin{thm}[Enomoto~\cite{En}]\label{t24} Let $5\leq s\leq n$ and $\ell:=2(n-3)/(s-4)$.
Suppose $H$ is a $3$-connected $n$-vertex graph with
$d(x)+d(y)\geq s$ for all non-adjacent distinct $x,y\in V(H)$. Then for every distinct vertices $x$ and $y$ of $H$, there is
an $x,y$-path  of length at
least $s-2$. Moreover, if for some distinct $x,y\in V(H)$, there is  no $x,y$-path  of length at least $s-1$, then either
\[ \overline{K_{s/2}}+\overline{K_{n-s/2}}\subseteq  H\subseteq  K_{s/2}+\overline{K_{n-s/2}} \]
  or $\ell$ is an integer and
\[ \overline{K_3}+\ell {K_{s/2-2}}\subseteq  H\subseteq  K_{3}+\ell {K_{s/2-2}}.\]
\end{thm}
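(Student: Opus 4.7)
The plan is to convert the long-$x,y$-path problem into a long-cycle-through-a-vertex problem and then combine a P\'osa-style rotation-extension with an extremal analysis. Introduce an auxiliary vertex $v^{*}$ joined only to $x$ and $y$, forming $H^{*} := H \cup \{v^{*}x, v^{*}y\}$; then an $x,y$-path in $H$ of length $\ell$ corresponds exactly to a cycle through $v^{*}$ in $H^{*}$ of length $\ell+2$. Since $H$ is $3$-connected, $H^{*}$ is $2$-connected with $\delta(H^{*}) \geq 2$, and the Ore-type hypothesis on $H$ translates to a slightly weaker condition on $H^{*}$ that still controls all pairs not involving $v^{*}$. The task becomes: find a cycle of length at least $s$ through $v^{*}$ in $H^{*}$.

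For the main bound, fix a longest $x,y$-path $P$ in $H$ and suppose $|V(P)| \leq s-2$. Run rotation-extension at both endpoints of $P$: by $3$-connectivity every vertex of $R := V(H)\setminus V(P)$ attaches to $V(P)$ in at least three places, and the degree-sum condition $d(x')+d(y') \geq s$ for each pair of rotated endpoints $x',y'$ of a path on $V(P)$ forces enough neighbors inside $V(P)$ to produce either a direct extension through a vertex of $R$ (contradicting maximality of $P$) or, by the standard P\'osa chord-rotation trick, a rearrangement picking up a new vertex of $R$. Iterating this gives $|V(P)| \geq s-1$, i.e., an $x,y$-path of length at least $s-2$, proving the first assertion.

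For the moreover clause, assume there exist $x,y$ admitting no $x,y$-path of length $\geq s-1$, and let $P$ be an $x,y$-path of length exactly $s-2$; set $R := V(H)\setminus V(P)$. A refined rotation argument shows that the attachment sets on $V(P)$ of distinct vertices of $R$ must coincide with a common ``hub'' set $A \subseteq V(P)$; otherwise two mismatched attachments produce the forbidden path of length $s-1$. The Ore-type degree-sum condition, applied to non-adjacent pairs in $R$ and to pairs of the form $\{r,a\}$ with $r\in R$, $a\in V(P)\setminus A$, forces a dichotomy on $|A|$. If $|A|=s/2$, then $V(H)\setminus A$ is independent and every vertex outside $A$ is joined to all of $A$, giving $\overline{K_{s/2}}+\overline{K_{n-s/2}}\subseteq H\subseteq K_{s/2}+\overline{K_{n-s/2}}$. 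If $|A|=3$, then $A$ is independent and $V(H)\setminus A$ partitions into blocks all joined completely to $A$; sharpness of the degree-sum bound forces each block to be a clique of size $s/2-2$, and the identity $|V(H)\setminus A|=n-3=\ell\cdot(s/2-2)$ gives $\ell = 2(n-3)/(s-4)$.

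The main obstacle, where I expect the bulk of the technical work, is the second extremal family. One must rule out ``mixed'' configurations --- blocks of unequal size, partially attached to $A$, or joined by edges inside $R$ that bypass $A$ --- and pin down that the attachment hub is \emph{exactly} an independent triple. This needs $3$-connectivity at full strength (so no block is cut off by a pair), combined with the observation that any extra edge or any oversized block would allow a traversal yielding an $x,y$-path of length $s-1$, and any undersized block would violate the Ore-type inequality at one of its vertices. Only the rigid ``book of equal cliques on an independent triple'' survives, which is precisely the second extremal family.
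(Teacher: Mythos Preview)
The paper does not prove this statement: Theorem~\ref{t24} is quoted from Enomoto's paper~\cite{En} (the text says it ``immediately follows from Corollary~1 in~\cite{En}'') and is used as a black box in Section~\ref{maintheorem}. So there is no in-paper proof to compare your proposal against.

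As for the proposal itself, it is a plausible outline but several steps are not yet arguments. The auxiliary-vertex reduction is fine conceptually, but since $v^{*}$ has degree $2$ the Ore-type hypothesis fails at every nonadjacent pair containing $v^{*}$, and you never explain how rotation-extension survives this; Enomoto's own proof does not go through such a reduction. More seriously, in the ``moreover'' analysis you assert a dichotomy $|A|\in\{s/2,3\}$ with no mechanism for excluding intermediate hub sizes, and you claim that in the second family the hub $A$ must be independent---but the theorem allows any graph between $\overline{K_3}+\ell K_{s/2-2}$ and $K_3+\ell K_{s/2-2}$, so $A$ can be a triangle. Finally, the assertion that all blocks of $H-A$ are cliques of \emph{equal} size $s/2-2$ needs a real argument: you say ``any undersized block would violate the Ore-type inequality,'' but a vertex in a small block could still meet the degree-sum condition via its three neighbors in $A$ together with high-degree partners elsewhere, so this step as written does not close.
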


A further strengthening of this result was given by Bondy and Jackson~\cite{BonJac}. Finally, we require some results on
cycles containing prescribed sets of edges. The following was proved by P\'{o}sa~\cite{Po}:

\begin{thm}[P\'osa~\cite{Po}]\label{t21} Let $n\geq 3$, $k<n$ and let $G$ be an $n$-vertex graph such that
\begin{equation}\label{m6}
d(u)+d(v)\geq n+k\qquad \mbox{ for every non-edge $uv$ in $G$.}
\end{equation}
Then for every linear forest $F$ with $k$ edges contained in $G$, the graph $G$
has a hamiltonian cycle containing all edges of $F$.
\end{thm}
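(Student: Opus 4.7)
The plan is to mimic the Bondy--Chv\'atal closure technique (Theorem~\ref{th:hamclosure}), adapted to preserve the prescribed linear forest $F$. I would prove the following \emph{$F$-closure lemma}: if $uv \notin E(G)$ with $d_G(u)+d_G(v) \geq n+k$, then $G$ has a Hamilton cycle through $F$ if and only if $G+uv$ does. Iterating this lemma is legitimate because adding edges only increases degrees, so the hypothesis~\eqref{m6} persists on the remaining non-edges; hence one may add all such non-edges until the graph becomes $K_n$. Finally, $K_n$ trivially contains a Hamilton cycle through any linear forest with $k\leq n-1$ edges: concatenate the paths of $F$ in any cyclic order and insert the uncovered vertices into the gaps.

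Only the direction ``$G+uv$ has an $F$-Hamilton cycle $\Longrightarrow$ $G$ does'' is nontrivial. If the cycle $C$ in $G+uv$ avoids $uv$ we are done; otherwise $P := C - uv = x_0 x_1 \cdots x_{n-1}$ is a Hamilton path in $G$ with $x_0=v$, $x_{n-1}=u$, and $F\subseteq E(P)$ (as $uv\notin F$). I would then apply a P\'osa-style rotation: set
\[ A = \{\,i : x_0 x_i \in E(G)\,\}, \qquad B = \{\,i : x_{i-1} x_{n-1} \in E(G)\,\}, \]
and look for $i$ satisfying $i\in A\cap B$ and $x_{i-1}x_i \notin F$. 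Any such index yields the $F$-Hamilton cycle $x_0 x_i x_{i+1} \cdots x_{n-1} x_{i-1} x_{i-2} \cdots x_1 x_0$ in $G$, which differs from $P$ only by removing the single edge $x_{i-1}x_i$ and adding the two chords $x_0 x_i$ and $x_{i-1}x_{n-1}$. Since $uv\notin E(G)$, we have $n-1\notin A$ and $1\notin B$, so $A\cup B\subseteq \{1,\dots,n-1\}$ and in fact $A\cap B\subseteq \{2,\dots,n-2\}$; consequently
\[ |A\cap B| \geq |A|+|B|-(n-1) = d(u)+d(v)-(n-1) \geq k+1. \]
At most $k$ of these indices can be ``forbidden'' (i.e.\ have $x_{i-1}x_i\in F$), since $F$ has exactly $k$ edges, so a good $i$ exists.

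The degree-sum hypothesis is exactly tight for this rotation: the $k+1$ candidate chord positions are precisely what is needed to dodge the $k$ edges of $F$. Everything else — the iterative closure argument and the base case on $K_n$ — is routine once this single lemma is established, so I expect the main obstacle to lie in setting up the P\'osa rotation so that the removed path-edge is guaranteed to lie outside $F$.
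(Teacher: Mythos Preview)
The paper does not prove Theorem~\ref{t21}; it is listed in Section~2 among the classical results quoted from the literature (here, P\'osa~\cite{Po}) and used as a black box. So there is no ``paper's own proof'' to compare against.

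That said, your argument is correct and is essentially the standard proof. The closure step is sound: once $uv$ is added, every remaining non-edge still satisfies~\eqref{m6} with room to spare, so iteration reaches $K_n$, and the base case is immediate. In the rotation step your bookkeeping is accurate: $|A|=d(v)$, $|B|=d(u)$, $A\cup B\subseteq\{1,\dots,n-1\}$, hence $|A\cap B|\ge d(u)+d(v)-(n-1)\ge k+1$; since $F\subseteq E(P)$ contributes exactly $k$ ``forbidden'' indices $i$ with $x_{i-1}x_i\in F$, a good $i$ survives. The observation that $1\notin B$ and $n-1\notin A$ (because $uv\notin E(G)$) guarantees $2\le i\le n-2$, so the rotated cycle is genuinely a Hamilton cycle. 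Nothing is missing; this is exactly the Bondy--Chv\'atal/P\'osa argument adapted to carry the linear forest $F$, and the degree-sum threshold $n+k$ is used tightly to produce $k+1$ pivot positions against $k$ forbidden edges.
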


The  analog of P\' osa's Theorem for bipartite graphs below is a simple corollary of Theorem~7.3 in~\cite{ZW}.

\begin{thm}[Zamani and West~\cite{ZW}]\label{zw}
Let $s\geq 3$ and  $K$ be a subgraph of the complete bipartite graph $K_{s,s}$ with
partite sets $A$ and $B$ such that for every $x\in A$ and $y\in B$ with $xy\notin E(K)$, $d(x)+d(y)\geq s+1+i$.
Then for every linear forest $F\subseteq K$ with at most $2i$ edges, there is a hamiltonian cycle in $K$ containing all edges of $F$.
\end{thm}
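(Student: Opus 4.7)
The plan is to adapt P\'osa's rotation-closure technique, which underlies Theorem~\ref{t21}, to the bipartite setting. First I would define the bipartite $(s+1+i)$-closure $\mathrm{Cl}(K)$ by iteratively adding any non-edge $xy$ with $x\in A$, $y\in B$ satisfying $d(x)+d(y)\geq s+1+i$. A standard confluence argument shows $\mathrm{Cl}(K)$ is well defined, and since degrees only increase along the process, the hypothesis on non-edges of $K$ propagates and forces $\mathrm{Cl}(K)=K_{s,s}$. In $K_{s,s}$ any linear forest with at most $2s-2$ edges extends easily to a Hamilton cycle (arrange the path components cyclically, linking their endpoints through the remaining vertices while alternating between $A$ and $B$). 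Hence the theorem reduces to a closure lemma: $K$ has a Hamilton cycle through $F$ iff $\mathrm{Cl}(K)$ does.

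The closure lemma reduces in turn to its one-step version: if $K'=K+xy$ has a Hamilton cycle $C$ through $F$ that uses $xy$, then $K$ has one too. Writing $C$ as $x=v_1,y=v_2,v_3,\ldots,v_{2s},v_1$ alternating between $A$ and $B$, the bipartite P\'osa swap at an odd index $j$ --- delete $xy$ and $v_{j-1}v_j$, insert $v_1v_{j-1}$ and $v_2v_j$ --- produces another Hamilton cycle of $K'$, provided $v_{j-1}\in N_K(x)$ and $v_j\in N_K(y)$. A shift-intersection count using $d_K(x)+d_K(y)\geq s+1+i$, together with the fact that the shifted $N_K(x)$ and $N_K(y)$ both sit inside an $s$-element set of odd positions, gives at least $i+1$ candidate swap indices. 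Each candidate swap preserves $F$ exactly when the removed edge $v_{j-1}v_j$ is not in $F$.

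The main obstacle is that as many as $2i$ of the $i+1$ candidates may be blocked by edges of $F$, a two-to-one deficit that a single rotation cannot bridge. To overcome this I would invoke P\'osa's iterated rotation technique: from any cycle blocked in the above sense, generate a tree of rotation-equivalent Hamilton paths by recursively applying admissible swaps. The degree condition guarantees at least $i+1$ successors at each node of this tree, and a counting argument on the tree (analogous to the one behind Theorem~\ref{t21}) shows that some leaf corresponds to a Hamilton cycle containing $F$ while avoiding $xy$. This iterated rotation argument is precisely the content of \cite[Theorem~7.3]{ZW}, from which Theorem~\ref{zw} follows via the routine closure reduction sketched above; combining the three steps yields a Hamilton cycle of $K$ through $F$, completing the proof.
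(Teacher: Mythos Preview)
The paper does not give its own proof of Theorem~\ref{zw}; it is quoted as a result of Zamani and West, with the sole remark that it ``is a simple corollary of Theorem~7.3 in~\cite{ZW}.'' Your proposal ends up in exactly the same place: after outlining a bipartite closure reduction and a single P\'osa swap, you correctly identify the deficit (only $i+1$ swap candidates versus up to $2i$ edges of $F$ that may block them), and you then defer the decisive iterated-rotation step to \cite[Theorem~7.3]{ZW}. So there is no paper proof to compare against, and your treatment is aligned with the paper's --- both take the result as a black box from~\cite{ZW}.

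One remark on the scaffolding you add: the closure reduction is not self-contained, because the one-step lemma you need (if $K+xy$ has a Hamilton cycle through $F$ then so does $K$) is precisely the statement you cannot establish without the iterated rotations of~\cite{ZW}. Once you invoke \cite[Theorem~7.3]{ZW} for that step, the closure detour buys nothing --- the cited theorem already yields Theorem~\ref{zw} directly, which is presumably what the paper means by ``simple corollary.'' Your outline is thus a plausible narrative of how such a proof is organized, but it is not an independent argument.
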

We will use only the following partial case of Theorem~\ref{zw}.

\begin{cor}\label{zw2}
 Let $s\geq 4$, $1\leq i\leq 2$ and $K$ be a subgraph of $K_{s,s}$ with at least $s^2-s+2+i$ edges.
If $F\subseteq K$ is a linear forest with at most $2i$ edges and at most two components,  then  $K$ has a hamiltonian cycle  containing all edges of $F$.
\end{cor}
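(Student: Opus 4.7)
The plan is to derive Corollary~\ref{zw2} as an immediate consequence of Theorem~\ref{zw}, by converting the edge-count hypothesis into the Ore-type degree condition that the theorem requires. Concretely, I would show that $e(K)\ge s^2-s+2+i$ forces $d_K(x)+d_K(y)\ge s+1+i$ for every pair $x\in A$, $y\in B$ with $xy\notin E(K)$, and then invoke Theorem~\ref{zw} with the given linear forest $F$ directly.

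Fix such a non-edge $xy$. Since $K\subseteq K_{s,s}$ is bipartite, an edge incident to both $x$ and $y$ would have to equal $xy$; as $xy\notin E(K)$, no edge is double-counted in $d_K(x)+d_K(y)$. Hence $d_K(x)+d_K(y)$ is exactly the number of edges of $K$ with an endpoint in $\{x,y\}$, and the remaining edges of $K$ all lie in $K[A\setminus\{x\},B\setminus\{y\}]$, contributing at most $(s-1)^2$. Therefore
\[
d_K(x)+d_K(y)\ \ge\ e(K)-(s-1)^2\ \ge\ (s^2-s+2+i)-(s-1)^2\ =\ s+1+i,
\]
which is precisely the hypothesis of Theorem~\ref{zw}. (If $K=K_{s,s}$ there are no non-edges and the condition is vacuous.)

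Since $F\subseteq K$ is a linear forest with at most $2i$ edges, Theorem~\ref{zw} then yields a hamiltonian cycle of $K$ containing all edges of $F$, completing the proof. No real obstacle arises: the only nontrivial step is the one-line bipartite counting estimate above. In particular, the ``at most two components'' clause of Corollary~\ref{zw2} is not used in this derivation; it is retained only because the sequel applies the corollary in that form.
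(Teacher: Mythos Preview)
Your proposal is correct and matches the paper's approach: the paper states Corollary~\ref{zw2} as a ``partial case'' of Theorem~\ref{zw} without further proof, and your one-line bipartite counting estimate $d_K(x)+d_K(y)\ge e(K)-(s-1)^2\ge s+1+i$ is exactly the intended verification. Your observation that the ``at most two components'' clause is unused in the derivation is also accurate.
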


\section{Lemmas on contractions}

An essential part of the proof of Theorem \ref{t3} is to analyze contractions of edges in graphs.
Specifically, we shall start with a graph $G$ and contract edges according to some basic rules.
Let us mention that the extensive use of contractions to prove the Erd\H{o}s--Gallai Theorem was
introduced by Lewin~\cite{Lewin}. In this section, we present some basic structural lemmas on contractions.

\begin{lem}\label{l2o} Let $n\geq 4$ and let $G$ be an  $n$-vertex $2$-connected graph.
Let $v\in V(G)$ and $W(v):=\{w\in N(v)\,:\, N[v]\not\subseteq  N[w]\}$. If $W(v)\neq \emptyset$,
then there is $w\in W(v)$ such that $G/vw$ is $2$-connected.
\end{lem}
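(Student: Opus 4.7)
The plan is to reduce the problem to a statement about cut vertices of $G-v$ and then exploit the block structure.

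The key reduction is the observation that $G/vw$ is $2$-connected if and only if $G-\{v,w\}$ is connected. Indeed, $G/vw$ has at least $n-1\geq 3$ vertices, so $2$-connectivity is equivalent to having no cut vertex. For any $u\in V(G)\setminus\{v,w\}$, the graph $(G/vw)-u$ is exactly the contraction $(G-u)/vw$; since $G-u$ is connected (as $G$ is $2$-connected) and edge contraction preserves connectivity, this graph is connected. The only remaining candidate for a cut vertex is the contracted vertex $v*w$ itself, and $(G/vw)-(v*w)=G-\{v,w\}$. Since $G-v$ is connected, this reduces the task to finding $w\in W(v)$ which is \emph{not} a cut vertex of $G-v$.

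If $G-v$ is $2$-connected, it has no cut vertex and any $w\in W(v)$ suffices. Otherwise, consider the block-cut tree of $G-v$. This tree has at least two leaves, giving two distinct end-blocks $B_1,B_2$ of $G-v$ whose unique cut vertices in $G-v$ are $b_1,b_2$ (possibly equal). For each $i$, the $2$-connectivity of $G$ forces $v$ to have a neighbor in $V(B_i)\setminus\{b_i\}$: otherwise, $b_i$ alone would separate $V(B_i)\setminus\{b_i\}$ from the rest of $G$. Fix such neighbors $w_1\in V(B_1)\setminus\{b_1\}$ and $w_2\in V(B_2)\setminus\{b_2\}$. Note that $w_1$ and $w_2$ are non-cut-vertices of $G-v$ (they each lie in a unique block of $G-v$, namely the end-block), so if either $w_i$ belongs to $W(v)$, we are done by the reduction.

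It remains to rule out the case $w_1,w_2\notin W(v)$. Then $N[v]\subseteq N[w_i]$ for $i=1,2$. Because $w_1$ is a non-cut-vertex of $G-v$ lying in the end-block $B_1$, all neighbors of $w_1$ in $G-v$ lie in $V(B_1)$, so $N_G[w_1]\subseteq V(B_1)\cup\{v\}$. Combined with $N(v)\subseteq N[w_1]$ and $v\notin N(v)$, this yields $N(v)\subseteq V(B_1)$. In particular $w_2\in V(B_1)$, so $w_2\in V(B_1)\cap V(B_2)$. But distinct blocks share at most one vertex, which must be a cut vertex of $G-v$; being in the end-block $B_2$, such a vertex can only be $b_2$. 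Thus $w_2=b_2$, contradicting our choice of $w_2$. The main (mild) obstacle is simply the careful block-theoretic bookkeeping in this last paragraph; the rest is routine once the reduction to cut vertices of $G-v$ is in hand.
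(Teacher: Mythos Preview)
Your proof is correct. The reduction in the first paragraph is exactly the paper's observation (that only $v*w$ can be a cut vertex of $G/vw$, equivalently that $G/vw$ is $2$-connected iff $\{v,w\}$ does not separate $G$). From there the two arguments diverge.

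The paper proceeds by an extremal choice: assuming every $w\in W(v)$ gives a separating pair $\{v,w\}$, it picks $w\in W(v)$ minimizing the size of a smallest component $C$ of $G-v-w$, then observes that $v$ has a neighbor $u\in C$ with $u\in W(v)$ (since $u$ cannot see any neighbor of $v$ in the other part), and that every component of $G-v-u$ avoiding $w$ lies inside $C$, contradicting minimality. You instead invoke the block--cut tree of $G-v$: two end-blocks yield two non-cut neighbors $w_1,w_2$ of $v$, and if neither lies in $W(v)$ then $N(v)\subseteq N[w_1]\subseteq V(B_1)$ forces $w_2\in V(B_1)\cap V(B_2)$, hence $w_2=b_2$, a contradiction. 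Your route is structurally transparent and makes the role of end-blocks explicit; the paper's minimal-counterexample argument is more self-contained (no block machinery) and perhaps slightly shorter. Both are standard and of comparable difficulty.
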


{\bf Proof.} Let $w\in W(v)$, $G_w=G/vw$. Recall that $v*w$ is the vertex in $G_w$ obtained by contracting $v$ with $w$.
Since $G$ is $2$-connected, $G_w$ is connected. If $x\neq v*w$ is a cut vertex in $G_w$, then it is a cut vertex in $G$,
a contradiction. So, the only cut vertex in $G_w$ can be $v*w$. Thus, if the lemma does not hold, then for every $w\in W(v)$,
$v*w$ is the unique cut vertex in $G_w$. This means that  for every $w\in W(v)$, $\{v,w\}$ is a separating set in $G$.

Choose $w\in W(v)$ so that to minimize the order of a minimum component in $G-v-w$. Let $C$ be the vertex set of
such a component in $G-v-w$ and $C'=V(G) \setminus (C \cup \{v,w\})$. Since $G$ is $2$-connected, $v$ has a neighbor $u\in C$ and a
neighbor $u'\in C'$. Since $uu'\notin E(G)$, $u\in W(v)$. But the vertex set of every component of $G-v-u$ not
containing $w$ is contained in $C$. This contradicts the choice of $w$.\qed

This lemma yields the following fact.

\begin{lem}\label{l3o} Let $n\geq 4$ and let $G$ be an  $n$-vertex $2$-connected graph. For every $v \in V(G)$, there exists $w \in N(v)$ such that $G/vw$ is $2$-connected.
\end{lem}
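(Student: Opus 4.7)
The plan is to reduce directly to Lemma~\ref{l2o} whenever that lemma applies, and otherwise exploit the rigid structure forced on $G$ near $v$ when the hypothesis of Lemma~\ref{l2o} fails. Given $v\in V(G)$, form the set $W(v)=\{w\in N(v):N[v]\not\subseteq N[w]\}$ as in Lemma~\ref{l2o}. If $W(v)\neq\emptyset$, Lemma~\ref{l2o} already produces a $w\in W(v)$ with $G/vw$ two-connected, and we are done. So the only case that requires work is $W(v)=\emptyset$.

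In that case every neighbor $w$ of $v$ satisfies $N[v]\subseteq N[w]$; equivalently, each vertex of $N(v)$ is adjacent to every other vertex of $N[v]$, so $N[v]$ induces a clique in $G$. Since $G$ is $2$-connected and $n\geq 4$, we have $d(v)\geq 2$, so $N(v)$ is nonempty. I would then claim that in this situation \emph{every} $w\in N(v)$ yields a $2$-connected contraction $G/vw$.

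To prove this claim, set $H:=G/vw$ with contracted vertex $v*w$, and suppose for contradiction that $H$ has a cut vertex $x$. If $x\neq v*w$, then the standard uncontracting argument (the edge $vw$ keeps $v$ and $w$ inside a common component of $G-x$) shows that $x$ is also a cut vertex of $G$, contradicting $2$-connectivity. Hence $x=v*w$, that is, $G-v-w$ must be disconnected. But $N(v)\setminus\{w\}$ is nonempty and induces a clique in $G-v-w$ (since $N[v]$ is a clique), and every vertex $s\in V(G)\setminus N[v]$ is joined to $N(v)\setminus\{w\}$ inside $G-v-w$: $G-w$ is connected, so it contains a $v,s$-path; its second vertex lies in $N(v)\setminus\{w\}$, and the sub-path from that vertex to $s$ avoids both $v$ and $w$. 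Therefore $G-v-w$ is connected, contradicting the assumption that $v*w$ is a cut vertex of $H$.

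The essentially only obstacle is handling the case $W(v)=\emptyset$, but once one notices that this case forces $N[v]$ to be a clique, the verification of $2$-connectivity of $G/vw$ becomes elementary bookkeeping of how the components of $G-v-w$ interact with the clique $N(v)\setminus\{w\}$, and the lemma follows.
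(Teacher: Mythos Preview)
Your proof is correct and follows essentially the same route as the paper: invoke Lemma~\ref{l2o} when $W(v)\neq\emptyset$, and when $W(v)=\emptyset$ use that $N[v]$ is a clique to verify $2$-connectivity of $G/vw$. The paper streamlines the second case by noting that, since $N[v]\subseteq N[w]$, the contraction $G/vw$ is literally isomorphic to $G-v$, so it suffices to check that no cut vertex of $G-v$ arises; your direct case analysis of cut vertices in $G/vw$ reaches the same conclusion with only slightly more bookkeeping.
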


{\bf Proof.} If $W(v)\neq \emptyset$, this follows from Lemma~\ref{l2o}. Suppose $W(v)= \emptyset$.
This means $G[N(v)]$ is a clique. Then contracting any edge incident with $v$ is equivalent to deleting $v$.
Let $G'=G-v$. Since $d(v)\geq 2$ and $G[N(v)]$ is a clique, any cut vertex in $G'$ is also a cut vertex in $G$.\qed

For an edge $xy$ in a graph $H$, let $T_H(xy)$ denote the number of triangles containing $xy$.  Let
$T(H):=\min\{T_H(xy)\,:\,xy\in E(H)\}$. When we contract an edge $uv$ in a graph $H$, the degree of every
$x\in V(H)\setminus\{u,v\}$ either does not change or decreases by $1$. Also the degree of $u*v$ in $H/uv$ is at least
$\max\{d_H(u),d_H(v)\}-1$. Thus
\begin{equation}\label{m254}
\mbox{$\delta(H/uv)\geq \delta(H)-1$ for every graph $H$ and $uv\in E(H)$.}
\end{equation}
Similarly,
\begin{equation}\label{m251}
\mbox{$T(H/uv)\geq T(H)-1$ for every graph $H$ and $uv\in E(H)$.}
\end{equation}

Suppose we contract edges of a $2$-connected graph one at a step, choosing always an edge $xy$ so that\\
(i) the new graph is $2$-connected and,\\
(ii) $xy$ is in the fewest triangles;\\
(iii) the contracted edge $xy$ is incident to a vertex of degree as small as possible up to (ii).

\begin{lem}\label{tl1}  Let $h$ be a positive integer. Suppose  a $2$-connected graph $G$  is obtained from a  $2$-connected graph $G'$ by contracting edge $xy$ into $x*y$
using the above rules (i)--(iii). If $G$ has at least $h$ vertices of degree at most $h$, then either $G'=K_{h+2}$ or $G'$ also has
a vertex of degree at most $h$.
\end{lem}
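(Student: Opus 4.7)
The plan is to argue by contrapositive: assume $\delta(G')\ge h+1$ and show that $G'=K_{h+2}$. Since $\delta(G)\ge\delta(G')-1\ge h$ by \eqref{m254}, each of the at least $h$ low-degree vertices of $G$ has degree exactly $h$; comparing with $\delta(G')$ this pins down $\delta(G')=h+1$. Such a degree-$h$ vertex of $G$ is either $x*y$ itself or a common neighbor of $x,y$ in $G'$ of degree exactly $h+1$. Using the identity $d_G(x*y)=d_{G'}(x)+d_{G'}(y)-2-T_{G'}(xy)$, a case distinction on whether $x*y$ is such a low-degree vertex yields $T^{*}:=T_{G'}(xy)\ge h$ in either case.

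Next I would use rules (ii) and (iii) to force one of $x,y$ to be a minimum-degree vertex of $G'$. Let $z$ be any vertex with $d_{G'}(z)=h+1$, and use Lemma~\ref{l3o} to pick $w\in N(z)$ so that $G'/zw$ is 2-connected. Rule (ii) gives $T_{G'}(zw)\ge T^{*}\ge h$, while common neighbors of $z$ and $w$ sit inside $N(z)\setminus\{w\}$, so $T_{G'}(zw)\le d_{G'}(z)-1=h$. Thus $T_{G'}(zw)=T^{*}=h$. If $\min(d_{G'}(x),d_{G'}(y))$ were strictly greater than $h+1$, then the edge $zw$, having $\min(d_{G'}(z),d_{G'}(w))\le d_{G'}(z)=h+1$, would beat $xy$ under rule (iii), a contradiction. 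Hence one of $x,y$ is a minimum-degree vertex; say $d_{G'}(x)=h+1$.

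Third, I apply Lemma~\ref{l2o} to $x$. If $W(x)\neq\emptyset$, Lemma~\ref{l2o} yields some $w\in W(x)$ with $G'/xw$ 2-connected, so rule (ii) gives $T_{G'}(xw)\ge h$; but $x$ has exactly $h$ neighbors other than $w$, so $N(x)\setminus\{w\}\subseteq N(w)$ and hence $N_{G'}[x]\subseteq N_{G'}[w]$, contradicting $w\in W(x)$. Therefore $W(x)=\emptyset$, meaning $N_{G'}[x]$ induces a clique of size $h+2$ in $G'$. If $V(G')=N_{G'}[x]$, we immediately conclude $G'=K_{h+2}$.

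Otherwise, set $A=N_{G'}[x]$ and $B=V(G')\setminus A\neq\emptyset$. Because $N(x)\subseteq A$, the 2-connectivity of $G'$ forces at least two vertices of $A\setminus\{x\}$ to have a neighbor in $B$, and each such ``bad'' vertex has degree at least $h+2$ in $G'$. Since $A$ is a clique, $N(x)\cap N(y)=A\setminus\{x,y\}$ has exactly $h$ elements, and a common neighbor of $x,y$ has $d_{G'}=h+1$ precisely when it is not bad. A direct case analysis on whether $y$ is bad (which is equivalent to whether $x*y$ is a degree-$h$ vertex of $G$) shows the number of degree-$h$ vertices of $G$ is at most $h-1$ in each case, contradicting the hypothesis. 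The main obstacle, I expect, is the combined use of rules (ii) and (iii) in the second paragraph to force a minimum-degree endpoint for $xy$; once $x$ is pinned down as a min-degree vertex, the rest is Lemma~\ref{l2o} and routine degree-counting.
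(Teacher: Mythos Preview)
Your proposal is correct. The overall strategy matches the paper's: force $\min(d_{G'}(x),d_{G'}(y))=h+1$ via rule~(iii), then use rule~(ii) to show that $N_{G'}[x]$ induces a clique $K_{h+2}$, then conclude. The execution, however, is organized differently. The paper splits into two cases according to whether $x*y$ itself has degree $h$ in $G$ (i.e.\ whether $|A'_h|\ge h$ or $|A'_h|=h-1$), identifies $N_{G'}(x)$ explicitly in each case, verifies by hand that $G'/xv$ is $2$-connected for the relevant $v$, and then derives the clique by contradiction from rule~(ii); it finishes by noting that if $G'\neq K_{h+2}$ a cut vertex appears. You instead unify both cases by first proving $T_{G'}(xy)\ge h$ regardless, then use Lemma~\ref{l3o} as a black box to justify rule~(iii), and Lemma~\ref{l2o} as a black box to get $W(x)=\emptyset$ directly; your endgame is a degree-count of ``bad'' vertices in $A\setminus\{x\}$ rather than a cut-vertex argument. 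Both routes are equally valid; yours avoids the case split and reuses the contraction lemmas already proved, at the cost of a slightly more involved final counting step.
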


{\bf Proof.} Since $G$ is $2$-connected, $h\geq 2$.
If $G$ has a vertex of degree less than $h$, the lemma holds by~\eqref{m254}.
So, let $A_j$ denote the set of vertices of degree exactly $j$ in $G$, and assume $|A_h|\geq h$.
Let $A'_h = A_h \setminus \{x*y\}$.
Suppose the lemma does not hold.
Then we have
\begin{equation}\label{eq1'}
\mbox{
each $v\in A'_h$ has degree $h+1$ in $G'$ and is adjacent to both, $x$ and $y$ in $G'$.}
\end{equation}

{\bf Case 1:} $|A_h'|\geq h$. Then by~\eqref{eq1'}, $xy$ belongs to at least $h$ triangles in which
the third vertex is in $A_h$. So  by (iii) and the symmetry between $x$ and $y$, we may assume $d_{G'}(x)=h+1$.
This in turn yields $N_{G'}(x)=A_h \cup \{y\}$.
Since $G'$ is $2$-connected each
$v\in A_h'$ is not a cut vertex.
Even more, $xv$ is not a cut edge.
Indeed, $y$ is a common neighbor of all neighbors of $x$ so all neighbors of $x$ must be in the same
component as $y$ in $G' - x - v$. It follows that
\begin{equation}\label{eq:8}
\mbox{
for every $v\in A_h'$,  $G'/vx$ is $2$-connected.}
\end{equation}

If $uv\notin E(G)$ for some $u,v\in A_h$, then by~\eqref{eq:8} and (ii), we would contract the edge $xu$
and not $xy$. Thus $G'[A_h' \cup \{x,y\}]=K_{h+2}$ and so
either $G'=K_{h+2}$ or $y$ is a cut vertex in $G'$,
as claimed.

{\bf Case 2:} $|A_h'|=h-1$. Then $x*y\in A_h$. 
                                               We obtain that
 $d_{G'}(x)=d_{G'}(y)=h+1$
and $N_{G'}[x]=N_{G'}[y]$. So by~\eqref{eq1'}, there is $z\in V(G)$ such that $N_{G'}[x]=N_{G'}[y]=A_h'\cup \{x,y,z\}$.
Again~\eqref{eq:8} holds (for the same reason that $N_{G'}[x]\subseteq  N_{G'}[y]$).
Thus similarly $vu\in E(G')$ for every $v\in A_h'$ and every $u\in A_h' \cup \{z\}$. Hence
$G'[A_h'\cup \{x,y,z\}]=K_{h+2}$ and  either $G'=K_{h+2}$ or $z$ is a cut vertex in $G'$,
as claimed.\qed

\begin{lem}\label{tr3}
Suppose that $G$ is a $2$-connected graph and $C$ is a longest cycle in it.
Then no two consecutive vertices of $C$ form a separating set.
\end{lem}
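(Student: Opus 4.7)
The plan is to argue by contradiction: assume that some two consecutive vertices of $C$, say $v_1$ and $v_2$ (where $C = v_1 v_2 v_3 \ldots v_\ell v_1$), form a separating set, and then use the pieces lying outside the edge $v_1 v_2$ to construct a cycle strictly longer than $C$.

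First, I would observe that the ``opposite arc'' $v_2 v_3 \ldots v_\ell v_1$ of $C$ is a connected subgraph of $G - \{v_1,v_2\}$ minus the endpoints $v_1, v_2$; more precisely, $v_3, \ldots, v_\ell$ all lie in a single component $H$ of $G - \{v_1, v_2\}$. Since $\{v_1, v_2\}$ is a separating set, there must be at least one further component $H'$ of $G - \{v_1, v_2\}$. At this point I would invoke the standard fact about $2$-connected graphs: if $\{a,b\}$ separates $G$ and $G$ is $2$-connected, then each of $a,b$ has a neighbor in every component of $G - \{a,b\}$ (otherwise the other vertex would be a cut vertex of $G$). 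Applying this, both $v_1$ and $v_2$ have neighbors in $H'$; pick $u_1 \in N(v_1) \cap V(H')$ and $u_2 \in N(v_2) \cap V(H')$.

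Since $H'$ is connected, there exists a path $Q$ in $H'$ from $u_1$ to $u_2$ (possibly of length $0$ if $u_1 = u_2$). I would then form the cycle
\[ C' \;=\; v_1\, u_1 \, Q \, u_2 \, v_2 \, v_3 \, v_4 \, \ldots \, v_\ell \, v_1, \]
obtained from $C$ by replacing the single edge $v_1 v_2$ with the path $v_1, u_1, Q, u_2, v_2$, which has length at least $2$ since $V(Q) \subseteq V(H')$ is disjoint from $\{v_1,v_2\}$. This gives $|C'| > |C|$, contradicting the maximality of $C$.

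I do not expect any real obstacle here: the only mild subtlety is handling the possibility $u_1 = u_2$ (the path $Q$ is a single vertex, and the detour still has length $2$), and, if $\ell = 3$, making sure that the component $H$ is well-defined (when $\ell = 3$, $H$ is just $\{v_3\}$, which is still a single component, and the argument goes through unchanged). The key ingredients are the ``neighbors in every component'' property of $2$-connected graphs and the routine rerouting along $C$.
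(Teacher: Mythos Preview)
Your proof is correct and follows essentially the same contradiction argument as the paper: both identify a component $H'$ of $G-\{v_1,v_2\}$ disjoint from $V(C)\setminus\{v_1,v_2\}$ and reroute the edge $v_1v_2$ through $H'$ to obtain a longer cycle. The only cosmetic difference is that the paper picks a vertex $x\in H'$ and invokes the fan version of Menger's theorem to get two $x$--$\{v_1,v_2\}$ paths sharing only $x$, whereas you first locate neighbors $u_1,u_2$ of $v_1,v_2$ in $H'$ and then connect them inside $H'$; the two arguments are equivalent.
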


\begin{proof}
Indeed, if for some $i$ the set $\{v_i,v_{i+1}\}$ is separating, then let $H_1$ and $H_2$ be two components of $G-\{v_i,v_{i+1}\}$
such that $V(C)\cap V(H_1)\neq \emptyset$.
Then $V(C)\setminus\{v_i,v_{i+1}\}\subseteq  V(H_1)$.
Let $x\in V(H_2)$.
Since $G$ is $2$-connected, it contains two paths from $x$ to $\{v_i,v_{i+1}\}$ that share only $x$.
Since $\{v_i,v_{i+1}\}$ separates $V(H_2)$ from the rest, these paths are fully contained in $V(H_2)\cup \{v_i,v_{i+1}\}$.
So adding these paths to $C-v_iv_{i+1}$ creates a cycle longer than $C$, a contradiction.
\end{proof}

\section{Proof of the main result, Theorem \ref{t3}, for $k \geq 9$}\label{maintheorem}

In this section, we give a precise description of the extremal graphs for Theorem \ref{t3} for $k\geq 9$.
The description for $k \leq 8$ is postponed to Section \ref{secshort}.
For Theorem \ref{t3}(a), when $k = 2t + 1$ and $t \neq 3$,
these are simply subgraphs of the graphs $H_{n,k,t}$: recall that $H_{n,k,a}$ has a partition into three sets $A,B,C$ such that $|A| = a$, $|B| = n - k + a$ and $|C| = k - 2a$
 and the edge set of $H_{n,k,a}$ consists of all edges between $A$ and $B$ together with all edges in $A \cup C$. For Theorem \ref{t3}(b), when $k = 2t + 2$ or $k = 7$,
 the extremal graphs $G$ contain a set $A$ of size at most $t$ such that $G - A$ is a star forest. In this case
 a more detailed description is required.

\medskip

{\bf Classes $\mathcal{G}_i(n,k)$ for $i \leq 3$.} Let ${\mathcal G}_1(n,k) := \{H_{n,k,t}\}$. Each $G\in {\mathcal G}_2(n,k)$ is defined by a partition $V(G)=A\cup B\cup J$, $|A| = t$ and a pair $a_1\in A$, $b_1\in B$
 such that $G[A]=K_t$, $G[B]$ is the empty graph, $G(A,B)$ is a complete bipartite graph and for every $c \in J$ one has $N(c)=\{a_1,b_1\}$. Every member of $G\in {\mathcal G}_3(n,k)$ is defined by a partition $V(G)=A\cup B\cup J$, $|A|=t$
 such that $G[A]=K_t$, $G(A,B)$ is a complete bipartite graph, and
 \begin{center}
 \begin{tabular}{lp{5in}}
$\bullet$ & $G[J]$ has more than one component \\
$\bullet$ & all components of $G[J]$ are stars with at least two vertices each \\
$\bullet$ & there is a $2$-element subset $A'$ of $A$ such that $N(J)\cap (A\cup B)=A'$ \\
$\bullet$ & for every component $S$ of $G[J]$ with at least $3$ vertices, all leaves of $S$ are adjacent to the same vertex $a(S)$ in $A'$.
\end{tabular}
\end{center}
The class $\mathcal{G}_4(n,k)$ is empty unless $k = 10$. Each member of ${\mathcal G}_4(n,10)$ has a 3-vertex set $A$ such that $G[A]=K_3$ and $G - A$ is a star forest such that
if a component $S$ of $G - A$ has more than two vertices then all its leaves are adjacent to the same vertex $a(S)$ in $A$.
These classes are illustrated below:
\vspace{-0.5in}
\begin{center}
\includegraphics[width=6.5in]{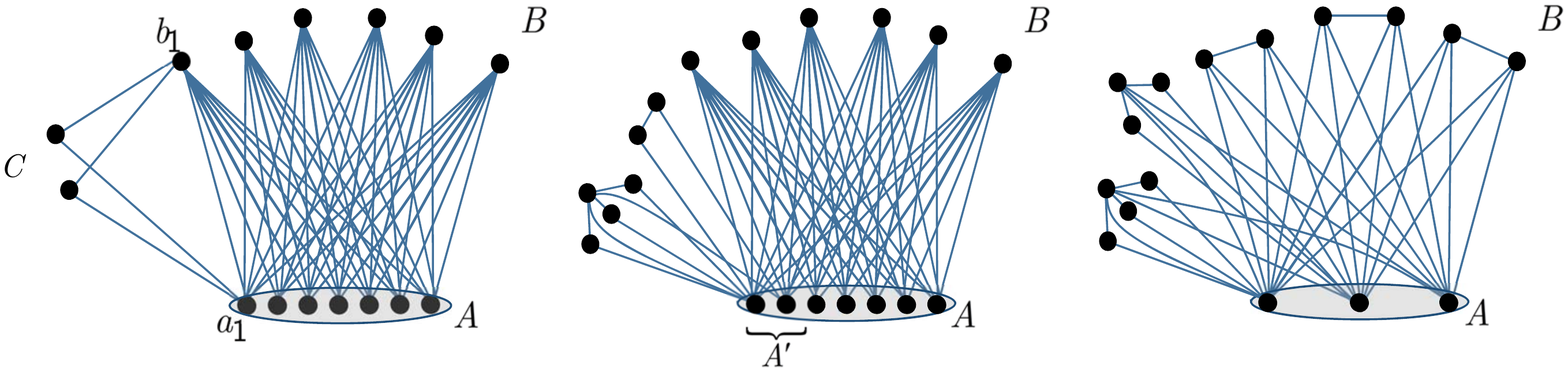}
\end{center}

\vspace{-0.7in}
\begin{center}
{\sf Figure 1: Classes $\mathcal{G}_2(n,k)$, $\mathcal{G}_3(n,k)$ and $\mathcal{G}_4(n,10)$.}
\end{center}

\medskip

{\bf Statement of main theorem.} Having defined the classes $\mathcal{G}_i(n,k)$ for $i \leq 4$, we now state a theorem
which implies Theorem \ref{t3} for $k \geq 9$ and shows that the extremal graphs are the graphs in the classes $\mathcal{G}_i(n,k)$:

\begin{thm}\label{main} {\rm (Main Theorem)}
Let $k \geq 9$, $n \geq \frac{3k}{2}$ and $t=\left\lfloor \frac{k-1}{2}\right\rfloor$. Let $G$ be an  $n$-vertex 2-connected graph  with no cycle of length at least $k$. Then
 $e(G)\leq h(n,k,t-1)$  or  $G$ is a subgraph of a graph in $\mathcal{G}(n,k)$, where
 \begin{center}
 \begin{tabular}{lp{5.8in}}
$(1)$ & if  $k$ is odd, then  $\mathcal{G}(n,k):= \mathcal{G}_1(n,k)=\{H_{n,k,t}\}$;\\
$(2)$ & if $k$ is even and $k \neq 10$, then  $\mathcal{G}(n,k):=\mathcal{G}_1(n,k) \cup \mathcal{G}_2(n,k) \cup \mathcal{G}_3(n,k)$; \\
$(3)$ & if  $k = 10$, then  $\mathcal{G}(n,k):=\mathcal{G}_1(n,10) \cup \mathcal{G}_2(n,10) \cup \mathcal{G}_3(n,10) \cup \mathcal{G}_4(n,10)$.
\end{tabular}
\end{center}
\end{thm}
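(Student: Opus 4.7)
I would proceed by induction on $n$ (for fixed $k$), driven by the edge-contraction machinery of Section 3. The engine is the identity
\[
h(n,k,t-1) - h(n-1,k,t-1) \;=\; t-1,
\]
which says that if $xy \in E(G)$ is any edge for which $G/xy$ is $2$-connected and $T_G(xy) \le t-2$, then $e(G/xy) = e(G) - 1 - T_G(xy) > h(n-1,k,t-1)$. Contracting an edge cannot increase the circumference (a cycle of length $\ge k$ through $x*y$ lifts to a cycle of length $\ge k$ in $G$ by routing through $x$, through $y$, or across the edge $xy$), so $G/xy$ remains a valid instance of the theorem on $n-1$ vertices. The inductive hypothesis then yields $G/xy \subseteq H^\ast$ for some $H^\ast \in \mathcal{G}(n-1,k)$, after which splitting $x*y$ back into the edge $xy$ places $G$ inside some member of $\mathcal{G}(n,k)$ by a case analysis through $\mathcal{G}_1,\mathcal{G}_2,\mathcal{G}_3,\mathcal{G}_4$.

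\textbf{High-triangle case.} The complementary situation is when the rules (i)--(iii) of Section 3 produce no contractible edge with $T \le t-2$, i.e.\ every contractible edge lies in at least $t-1$ triangles. Combined with Lemma~\ref{l3o} (every vertex has at least one contractible neighbor), this forces $\delta(G) \ge t$: for each $v$, a contractible neighbor $w$ satisfies $|N(v) \cap N(w)| \ge t-1$, giving $d(v) \ge t$. Dirac's Theorem~\ref{th:dirac} and $n \ge 3k/2 > 2t$ then yield $c(G) \ge 2t$, and since $c(G) < k \le 2t+2$, Kopylov's Theorem~\ref{th:Kopylov2} bounds $e(G) \le \max\{h(n,k,2), h(n,k,t)\}$. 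For $k \ge 9$ and $n \ge 3k/2$ the second term is larger, so $e(G) \le h(n,k,t)$ with equality only for $G = H_{n,k,t} \in \mathcal{G}_1(n,k)$.

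\textbf{Middle range.} What remains is $h(n,k,t-1) < e(G) < h(n,k,t)$ with $G \ne H_{n,k,t}$, where the refined classes $\mathcal{G}_2,\mathcal{G}_3,\mathcal{G}_4$ emerge. I would take a longest path $P = v_1 \cdots v_\ell$ in $G$ (with $\ell \ge 2t$ by the minimum-degree bound and Theorem~\ref{le:kop}) and analyze the bipartite subgraph $G(V(P),V(G)\setminus V(P))$. P\'osa's Theorem~\ref{t21} and Corollary~\ref{zw2} are then invoked to exclude every configuration in which $G-A$ (for a suitable core $A \subseteq V(G)$ of size $\le t$) fails to be a star forest of the prescribed shape: a non-star component, or a star whose leaves attach to the wrong vertices of $A$, would splice with $A$ via a hamiltonian cycle in a dense bipartite piece to produce a cycle of length $\ge k$. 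Enomoto's Theorem~\ref{t24} controls the long-path structure between prescribed endpoints and forces the attachment pattern characteristic of $\mathcal{G}_2$ or $\mathcal{G}_3$.

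\textbf{Main obstacle.} The genuinely hard step is the middle-range analysis, especially the exceptional case $k = 10$ producing $\mathcal{G}_4(n,10)$. This class is not a cosmetic artifact but a real extremal family whose core $A$ of size $3$ does not need the distinguished $2$-subset $A'$ present in $\mathcal{G}_3$; isolating it requires a separate sub-argument exploiting the small size $|A|=3$ in the cycle-lifting step. A secondary but lengthy obstacle is the uncontraction bookkeeping: checking that every way of splitting a vertex in a graph in $\mathcal{G}(n-1,k)$ lands inside some $\mathcal{G}_j(n,k)$ is conceptually routine but generates many small cases, each requiring verification that no forbidden long cycle is created by the reinserted edge $xy$.
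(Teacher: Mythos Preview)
Your contraction skeleton matches the paper's, but the inductive invariant you propose is the wrong one, and the step you call ``conceptually routine'' is precisely where the proof lives. The paper does \emph{not} induct on the conclusion $G/xy \subseteq H^\ast \in \mathcal{G}(n-1,k) \Rightarrow G \subseteq H^{\ast\ast} \in \mathcal{G}(n,k)$. When $x*y$ sits in the dominating $t$-set $A$ of $H^\ast$, splitting it can send some $B$-neighbours to $x$ and others to $y$; the resulting $G$ need not embed in any $\mathcal{G}_j(n,k)$, and showing that the bad splits force $c(G)\ge k$ is real work, not bookkeeping. The paper's key idea is to track \emph{sub}-structures rather than \emph{super}-structures through uncontraction: it introduces auxiliary families $\mathcal{F}_0,\dots,\mathcal{F}_4$ of dense bipartite-like graphs on $2t{+}1$ to $2t{+}3$ vertices, proves a preservation lemma (if $F'/xy$ contains some $H\in\mathcal{F}_i$ and $c(F')<k$ then $F'$ contains some $H'\in\mathcal{F}_{i'}$, possibly $i'\neq i$), and only at the very end analyses the components of $G_n - V(H)$ against a long-path lemma for the $\mathcal{F}_i$'s to read off which $\mathcal{G}_j$ contains $G_n$. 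Your $\mathcal{G}_i$-to-$\mathcal{G}_j$ lifting would have to reproduce all of this case analysis with a moving target of unbounded size.

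Separately, your ``high-triangle'' and ``middle range'' paragraphs do not cover the terminal case. Kopylov's theorem only characterizes equality $e(G)=h(n,k,t)$, so the full range $h(n,k,t-1)<e(G)<h(n,k,t)$ with $\delta(G)\ge t$ still needs structure; this is where the paper spends most of its effort, and it is also the base of the contraction process, so without it your induction has no anchor. In the paper this terminal graph $G_m$ bifurcates into $m=k$ (handled via Chv\'atal degree sequences and the $k$-closure, not Kopylov) and $m>k$ (handled by applying Enomoto's Theorem~\ref{t24} to $G_m[V(C)]$ for a longest \emph{cycle} $C$, not a longest path). The exceptional class $\mathcal{G}_4(n,10)$ emerges in that Enomoto analysis: the alternative $K_3 + \ell K_{t-2}$ of Theorem~\ref{t24} is consistent with $c(G_m)\le 2t+1$ only when $\ell=3$ and $t=4$, producing the $F_4$ configuration.
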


We prove this theorem in  this section. We also observe that if $k \geq 11$, then the only graph in the classes $\mathcal{G}_i(n,k)$ that is 3-connected
is $H_{n,k,t}$. Therefore Theorem \ref{main} implies  Corollary~\ref{3con}.

\medskip

 The idea of the proof is to take a graph $G$ satisfying the conditions of the theorem with $c(G)<k$, and to contract edges while preserving the average degree and
 $2$-connectivity of $G$.
A key fact is that if a graph contains a cycle of length at least $k$ and is obtained from another graph by contracting edges,
 then that other graph also contains a cycle of length at least $k$.
The process terminates with an $m$-vertex graph $G_m$ such that $G_m$ is 2-connected, $m \geq k$, and if $m > k$ then $G_m$ has minimum degree at least $t - 1$.
If $m>k$, then we apply
Theorem \ref{t24} to show that $G_m$ is a dense subgraph of $H_{m,k,t}$. If $m = k$, then we apply Theorems \ref{th:er}, \ref{th:nonham}, \ref{t1},
and \ref{th:hamclosure} to show that $G_m$ is a dense
subgraph of $H_{k,k,t}$. Using this, we show that $G_m$ contains a dense nice subgraph.
 Analyzing  contractions, we then show  that $G$ itself contains a dense nice subgraph. Finally, we show that every dense $n$-vertex graph
containing a dense nice subgraph but not containing a cycle of length at least $k$
must be a subgraph of a graph in one of the classes
described in Theorem \ref{main}.

\subsection{Basic Procedure}

Let $k,n$ be positive integers with $n \geq k$. Let $G$ be an $n$-vertex $2$-connected graph with $c(G) <  k$ and $e(G) \geq h(n,k,t-1) + 1$. We denote $G$ as $G_n$ and run the following procedure.

\medskip

{\bf Basic Procedure.} At the beginning of each round, for some $j : k\leq j\leq n$, we have a $j$-vertex $2$-connected  graph $G_j$ with $e(G_j)\geq h(j,k,t-1) + 1$.
\begin{center}
\begin{tabular}{lp{5in}}
(R1) & If $j=k$, then we stop. \\
(R2) &  If there is an edge $xy$ with $T_{G_j}(xy)\leq t-2$ such that $G_j/xy$ is $2$-connected, choose one such edge so that\\
& (i) $T_{G_j}(xy)$ is minimum, and subject to this \\
& (ii) $xy$ is incident to a vertex of minimum possible degree.\\
& Then obtain $G_{j-1}$ by contracting $xy$. \\
(R3) & If (R2) does not hold, $j \geq k + t - 1$ and there is $uv \in E(G_j)$ such that $G_j - u - v$ has at least $3$ components and one of the components, say $H_1$ is a $K_{t-1}$,
then let $G_{j-t+1}=G_j-V(H_1)$. \\
(R4) & If neither (R2) nor (R3) occurs, then we stop.
\end{tabular}
\end{center}

{\bf Remark 1.} By construction, every obtained $G_j$ is $2$-connected and has $c(G_j) <  k$. Let us check that
\begin{equation}\label{0325}
 e(G_j) \geq h(j,k,t-1) + 1
 \end{equation}
  for all $m\leq j\leq n$.
For $j=n$,~\eqref{0325} holds by assumption. Suppose $j>m$ and~\eqref{0325} holds. If we apply (R2) to $G_j$, then
 the number of edges decreases by at most $t-1$, and $(h(j,k,t-1) + 1)-(h(j-1,k,t-1) + 1)=t-1$. If we apply (R2) to $G_j$, then
 the number of edges decreases by at most $\binom{t+1}{2}-1$, and $(h(j,k,t-1) + 1)-(h(j-(t-1)),k,t-1) + 1)=(t-1)^2$.
But for $k\geq 9$, $(t-1)^2\geq \binom{t+1}{2}-1$. Thus every step of the basic procedure preserves~\eqref{0325}.

\medskip

Let $G_m$ denote the graph with which the procedure terminates.
\medskip

{\bf Remark 2.} Note that if the rule (R3) applies for some $G_j$, then $\delta(G_j)\geq t$ and
the set $\{u,v\}$ is still separating in $G_{j-t+1}$, thus
$T_{G_{j-t+1}}(xy)\geq t-1$ for every edge $xy$ such that
$G_{j-t+1}/xy$ is $2$-connected. In particular, $\delta(G_{j-t+1})\geq t$.
So (R2) does not apply after any application of (R3) and $\delta(G_m)\geq t$.

\subsection{The structure of $G_m$}

In the next two subsections, we prove  Proposition~\ref{gmprop} below, considering the cases $m = k$ and $m > k$ separately.
Let $F_4$ be the graph obtained from $K_{3,6}$ by adding three independent edges in the part of size six.
In this section we usually suppose that $n\geq 3t$, $t\geq 4$, although many steps work for smaller values as well.

\begin{prop}\label{gmprop}
The graph $G_m$ satisfies the following properties:
\begin{center}
\begin{tabular}{lp{5in}}
$(1)$ & $G_m \subseteq  H_{m,k,t}$ or  \\
$(2)$ & $m > k = 10$ and $G_m \supseteq F_4$.
\end{tabular}
\end{center}
\end{prop}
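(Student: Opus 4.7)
The proof splits naturally on whether $m=k$ or $m>k$. In both subcases, the Basic Procedure guarantees (by Remark~1) that $G_m$ is $2$-connected with $c(G_m)<k$ and $e(G_m)\geq h(m,k,t-1)+1$. When $m>k$ I would moreover deduce $\delta(G_m)\geq t$ as follows: rule~(R2) must have failed at termination, so every edge $xy$ for which $G_m/xy$ remains $2$-connected satisfies $T_{G_m}(xy)\geq t-1$; by Lemma~\ref{l3o} every vertex is incident to such an edge, and being in $t-1$ common triangles with a neighbor forces degree at least $t$.

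\textbf{Case $m=k$.} Here $G_k$ is a $k$-vertex $2$-connected non-Hamiltonian graph with $e(G_k)\geq h(k,k,t-1)+1$. I would apply Erd\H{o}s's Theorems~\ref{th:er} and~\ref{th:nonham} to cap $e(G_k)$ from above and rule out the $H_{k,k,2}$-branch, then invoke Chv\'atal's Theorem~\ref{t1} to produce an index $i<k/2$ with $d_i\leq i$ and $d_{k-i}<k-i$. Comparing with the edge-count lower bound pins $i=t$, forcing exactly $t$ vertices of degree at most $t$ and at least $k-t$ vertices of degree at least $k-t-1$. A Bondy--Chv\'atal $k$-closure argument (Theorem~\ref{th:hamclosure}) then saturates the high-degree side into a clique (containing a size-$t$ universal set $A$) and keeps the low side as an independent set $B$ of size $t$, giving exactly $G_k\subseteq H_{k,k,t}$.

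\textbf{Case $m>k$.} Using $\delta(G_m)\geq t$, I would apply Enomoto's Theorem~\ref{t24} (passing if necessary to a $3$-connected augmentation of $G_m$, e.g.\ by adjoining a universal vertex) together with Kopylov's path lemma~\ref{le:kop}, to either produce an $x,y$-path in $G_m$ with enough vertices and endpoint degree-sum to close into a cycle of length at least $k$, or to fall into one of the two Enomoto exceptional structures. The first exception, $\overline{K_{s/2}}+\overline{K_{n-s/2}}\subseteq H\subseteq K_{s/2}+\overline{K_{n-s/2}}$, is exactly the structure of $H_{m,k,t}$ and yields conclusion~(1). The second exception, $\overline{K_3}+\ell K_{s/2-2}\subseteq H\subseteq K_3+\ell K_{s/2-2}$, is compatible with the density hypothesis $e(G_m)\geq h(m,k,t-1)+1$ only in the regime $s=8$, $\ell=3$, $s/2-2=2$, giving nine vertices and corresponding to $k=10$; in that case $\overline{K_3}+3K_2=F_4\subseteq G_m$, yielding conclusion~(2).

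\textbf{Main obstacle.} The delicate step is Case $m>k$: first, verifying that the degree-sum hypothesis needed by Enomoto's theorem really is met by $G_m$ once (R2) and (R3) both fail (this is where the contraction lemmas of Section~3, particularly Lemma~\ref{tl1}, should do the work), and second, ruling out every branch of the second Enomoto exception except the single $k=10$ one that produces $F_4$. The latter reduces to showing that the edge count $\binom{3}{2}+\ell\binom{s/2-2}{2}+3\ell(s/2-2)+O(1)$ of the exceptional graph falls short of $\binom{m-t+1}{2}+(t-1)^2+1$ outside the $k=10$ regime, a careful but essentially arithmetic comparison parametrized by $k,t,m,\ell$.
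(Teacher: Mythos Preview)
Your case split ($m=k$ versus $m>k$) matches the paper, and for $m=k$ your high-level plan (Chv\'atal's theorem to locate an index $r$, then $k$-closure to saturate the high side) is indeed what the paper does. However, there are two substantive gaps.

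\textbf{Case $m=k$: pinning $r=t$ is harder than stated.} The inequality $e(G_k)\geq h(k,k,t-1)+1$ alone does \emph{not} rule out $r\leq t-1$: combined with the Chv\'atal-type upper bound $e(G_k)\leq r^2+\binom{k-r}{2}$ it leaves $r=2$ (and, for even $k$, $r=3$) open. The paper (Lemma~\ref{lem11}) closes this by tracking edge losses along the entire contraction chain: if $r\leq t-1$ then (R3) never applied, and Lemma~\ref{tl1} (this is where it is actually used, not in the $m>k$ case) propagates the low-degree witness back so that $T_{G_j}(x_{j-1}y_{j-1})\leq \min\{r+j-m-2,t-2\}$ at each step. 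Summing these refined losses gives a strictly larger lower bound on $e(G_k)$, and the residual $r=2$ case still needs a separate argument using P\'osa's theorem on Hamiltonian cycles through prescribed edges. Your sketch skips all of this.

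\textbf{Case $m>k$: Enomoto is applied to the wrong graph.} Adjoining a universal vertex to $G_m$ does give a $3$-connected graph, but Enomoto's conclusion then concerns paths in that augmentation; such paths may pass through the universal vertex, so you cannot convert them into long cycles in $G_m$, nor do the exceptional structures of Theorem~\ref{t24} transfer cleanly back. The paper's route is different: take a longest cycle $C$ in $G_m$, show (using~\eqref{tr1} and Lemma~\ref{tr3}) that $\delta(G_m[V(C)])\geq t$, and then prove that $G_m[V(C)]$ itself is $3$-connected (equation~\eqref{tr5}). This last step is the real crux and is where the failure of (R3) is used: a $2$-cut in $G_m[V(C)]$ forces a $K_{t-1}$ component attached at a $2$-cut of $G_m$, contradicting the termination rules. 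Enomoto is then applied to $H=G_m[V(C)]$ with $n=q=|C|$, and each of its three outcomes must still be extended from $V(C)$ to all of $G_m$ by analysing how the components of $G_m-V(C)$ attach (Cases~1 and~3 in the paper are nontrivial). Your plan does not contain this mechanism, and Kopylov's Theorem~\ref{le:kop} plays no role here.
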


\subsubsection{The case $m=k$}

If $G_k$ is hamiltonian, then $c(G)\geq k$,  a contradiction. So $G_k$ is not hamiltonian.

By Theorem~\ref{t1}, for every non-hamiltonian $n$-vertex graph $G$ with vertex degrees $d_1\leq d_2\leq\ldots\leq d_n$,
we define
\[ r(G):=\min \{i\;: d_i\leq i \,\mbox{\em and }\, d_{n-i}<n-i\}.\]

\begin{lem}\label{lem11} Let $t\geq 4$, $n\geq 3t$. If the vertex degrees  of  $G_k$ are $d_1\leq d_2\leq\ldots\leq d_{k}$,
 then  $r(G_k)=t$.
\end{lem}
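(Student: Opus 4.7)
I aim to prove $r(G_k)\le t$ and $r(G_k)\ge t$ separately.

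For the upper bound, I first observe that $G_k$ cannot be hamiltonian: a Hamilton cycle of $G_k$ would lift through the Basic Procedure to a cycle of length at least $k$ in $G$ (contractions do not shorten longest cycles, and (R3)-removals only delete vertices in the removed $K_{t-1}$ component), contradicting $c(G)<k$. Chv\'atal's Theorem~\ref{t1} applied to $G_k$ then supplies some index $i<k/2$ satisfying $d_i\le i$ and $d_{k-i}<k-i$. Since $k\in\{2t+1,2t+2\}$, this forces $i\le t$, so $r(G_k)\le t$.

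For the lower bound, I would split on whether rule (R3) was invoked in producing $G_k$. If (R3) fired at any stage, Remark~2 yields $\delta(G_k)\ge t$, so $d_1\ge t$ and $r(G_k)\ge t$ follows immediately. Otherwise the Basic Procedure consisted purely of (R2) contractions. Suppose for contradiction that $r(G_k)=i$ with $2\le i\le t-1$; then $d_i\le i$ and $d_{k-i}\le k-i-1$, and bounding the remaining degrees trivially gives
\[
2e(G_k) \;\le\; i^2 + (k-2i)(k-i-1) + i(k-1).
\]
Combining this with $e(G_k)\ge h(k,k,t-1)+1$ and a short quadratic comparison in $i$ rules out all moderate values (specifically the interval $(t+4)/3\le i\le t-1$ when $k=2t+1$, with an analogous range when $k=2t+2$). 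For the residual small values of $i$, Erd\H os's Theorem~\ref{th:nonham} bounds $e(G_k)\le \binom{k-2}{2}+4$; this together with the Basic Procedure's edge count pins $G_k$ down to (a dense subgraph of) $H_{k,k,2}$. The final step is to apply Lemma~\ref{tl1} iteratively in the reverse direction along the sequence of (R2) contractions, transporting the extremal low-degree structure of such an $H_{k,k,2}$ back to $G=G_n$ itself; the hypothesis $n\ge 3t$ excludes the exceptional $K_{h+2}$ case of Lemma~\ref{tl1}, and one arrives at an original graph with too many low-degree vertices to satisfy $e(G)>h(n,k,t-1)$, the desired contradiction.

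\textbf{Main obstacle.} The crux is the pure-(R2) case with small $i$ (in particular $i=2$, which is not killed by the naive edge bound and which saturates at the boundary $k=9$, $t=4$ where Theorem~\ref{th:nonham} gives equality). Excluding the residual $H_{k,k,2}$-type configurations genuinely requires the contraction history: the (R2) selection rules (minimum $T$, then minimum-degree endpoint), Lemma~\ref{tl1}, and the $2$-connectedness facts from Section~3 must be combined to pull the low-degree structure back through every contraction without the $K_{h+2}$ exception intervening. Managing this backwards propagation cleanly is the technical heart of the argument.
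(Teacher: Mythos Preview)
Your upper bound and the (R3)-branch of the lower bound are fine and match the paper. The gap is in the pure-(R2) branch, specifically the treatment of small $r$.

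First, Lemma~\ref{tl1} does not iterate the way you describe. Its hypothesis is that $G$ has at least $h$ vertices of degree at most $h$; the conclusion is only that $G'$ has \emph{one} vertex of degree at most $h$ (or $G'=K_{h+2}$). So after one application you no longer meet the hypothesis, and you cannot ``apply Lemma~\ref{tl1} iteratively'' back to $G_n$. What the paper actually does is use Lemma~\ref{tl1} \emph{once}, to get $\delta(G_{m+1})\le r$, and then propagate with the trivial~\eqref{m254} to obtain $\delta(G_j)\le r+j-m-1$ for all $m<j\le n$. Combined with the (R2) selection rule this bounds $T_{G_j}(x_{j-1}y_{j-1})\le\min\{r+j-m-2,\,t-2\}$, and summing the edge losses yields the strengthened lower bound~\eqref{m8}, namely $e(G_k)\ge \tfrac{3t^2+t+10}{2}+\binom{t-r}{2}$ (for $k=2t+2$; the odd case is analogous). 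Comparing this with $e(G_k)\le r^2+\binom{k-r}{2}$ disposes of $r\ge 3$ for $k$ even and of all $r\le t-1$ for $k$ odd. Your naive lower bound $h(k,k,t-1)+1$ is missing the crucial $\binom{t-r}{2}$ term and is not enough.

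Second, for $k=2t+2$ and $r=2$ even~\eqref{m8} does not give a contradiction (at $t=4$ both sides equal $32$), and this case cannot be finished by your ``pin $G_k$ to $H_{k,k,2}$ and pull back'' idea. Theorem~\ref{th:nonham} only characterises the equality case; it does not force $G_k\subseteq H_{k,k,2}$ when $e(G_k)$ is merely close to the maximum. The paper handles $r=2$ by a separate argument: the two degree-$2$ vertices $v_1,v_2$ have a common neighbourhood $\{u,w\}$, the graph $H=G_k-v_1-v_2$ is so dense that P\'osa's Theorem~\ref{t21} gives a hamiltonian cycle of $H$ through any prescribed short linear forest, and then a case analysis on whether $x_m*y_m$ lies in $N[v_1]\cup N[v_2]$ either produces a hamiltonian cycle of $G_{m+1}$ (contradiction) or sharpens~\eqref{m8} to the stronger~\eqref{a21}. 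This P\'osa step is essential and is absent from your plan.
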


{\bf Proof for $k = 2t + 2$.} Note that $r(G_k) \leq t$ since $r(G) < n/2$ (see Theorem \ref{t1}). Suppose $r:=r(G_k)\leq t-1$. Then by Remark~2, Rule (R3) never applied, and $G_k$ was obtained from $G$ by a sequence of $n-m$
edge contractions according (R2).
We may assume that for all $m\leq j<n$, graph $G_j$ was obtained from $G_{j+1}$ by contracting edge $x_jy_j$.
Then conditions for (R2) imply
\begin{equation}\label{03251}
T_{G_{j}}(x_{j-1}y_{j-1})\leq  t-2\quad\mbox{for every }\quad m+1\leq j\leq n.
\end{equation}

By Lemma~\ref{tl1}, $\delta(G_{m+1})\leq r$.
This together with~\eqref{03251} and~\eqref{m254} yield that for every $m < j \leq n$,
\begin{equation}\label{0324}
\mbox{ $\delta(G_j)\leq r+j-m-1$
 and so $T_{G_{j}}(x_{j-1}y_{j-1})\leq  \min\{r+j-m-2,t-2\}$.}
\end{equation}

 Contracting edge
 $x_{j-1}y_{j-1}$ in $G_j$, we lose $T_{G_{j}}(x_{j-1}y_{j-1})+1$ edges. Since $e(G) \geq h(n,k,t-1) + 1$, by~\eqref{m251} we obtain,
 \begin{eqnarray}\label{eqa1}
e(G_k) &\geq& h(n,k,t-1) + 1 - \sum_{j=m+1}^n\min\{t-1,r+j-m-1\} \\
&=& {t + 3 \choose 2} +(t - 1)(n - t - 3) +  1 - \sum_{j=m+1}^n\min\{t-1,r+j-m-1\} \nonumber \\
&=& {t + 3 \choose 2} +(t - 1)(n - t - 3)  + 1 - (t - 1)(n - m) + \sum_{j = m + 1}^n \max\{0,m+t - r - j\}  \nonumber \\
&=& \frac{3t^2+t+10}{2} + \sum_{j = m + 1}^n \max\{0,3t + 2 - r - j\}.  \nonumber
\end{eqnarray}
Since $n \geq 3t$, $\{\max\{0,3t+2-r-j\} : m + 1 \leq j \leq n\} = \{0,1,2,\dots,t-1-r\}$. Therefore
\begin{equation}\label{m8}
e(G_k) \geq \frac{3t^2+t+10}{2}+\sum_{i=1}^{t-1-r}i=\frac{3t^2+t+10}{2}+\binom{t-r}{2}.
\end{equation}
On the other hand, by the definition of $r$, $G_m$ has at most $r^2$ edges incident with the $r$ vertices of the smallest degrees and
at most $\binom{m-r}{2}$ other edges. Thus
  $e(G_m)\leq r^2+\binom{2t+2-r}{2}$.
Hence
\begin{equation}\label{a2}
\frac{3t^2+t+10}{2}+\binom{t-r}{2}\leq r^2+\binom{2t+2-r}{2}.
\end{equation}
Expanding the binomial terms in~\eqref{a2} and regrouping we get
\begin{equation}\label{m7}
t(r-3)\leq r^2-2r-4.
\end{equation}
If $r=3$, then the left hand side of~\eqref{m7} is $0$ and the right hand side is $-1$, a contradiction.
If $r\geq 4$, then dividing both sides of~\eqref{m7} by $r-3$ we get $t\leq r+1-  
  1/(r-3)$,  which yields $r\geq t$, as claimed.

So suppose $r=2$ and let $v_1,v_2$ be two vertices of degree $2$ in $G_k$.
Then by~\eqref{m8}, the graph $H=G_k-v_1-v_2$
has at least $$\frac{3t^2+t+10}{2}+\binom{t-2}{2}-2(2)=2t^2-2t+4$$ edges. So the complement of $H$ has at most $t-4$ edges and thus, for $u,w \in V(H)$:
\[ d_H(u)+d_H(w)\geq 2(2t-1)-(t-4)-1=3t+1=|V(H)|+t+1.\]
Hence by Theorem~\ref{t21},
\begin{equation}\label{m9}
\mbox{\em for each linear forest $F \subseteq H$ with $e(F) \leq t+1$, $H$
has a spanning cycle containing $E(F)$.}
\end{equation}
If $N(v_i)=\{u_i,w_i\}$ for $i=1,2$ and $v_1v_2\in E(G_k)$, say $u_1=v_2$ and $u_2=v_1$,
then by~\eqref{m9}, graph $H'=H+w_1w_2$ has a spanning cycle containing $w_1w_2$, and this cycle yields a hamiltonian cycle
in $G_k$, a contradiction. So $v_1v_2\notin E(G_k)$. Similarly, if
$N(v_1)\neq N(v_2)$, then by~\eqref{m9}, graph $H''=H+u_1w_1+u_2w_2$ has a spanning cycle containing $u_1w_1$ and $u_2w_2$. Note $w_1 \neq w_2$ since $H$ is 2-connected.
Again this yields a hamiltonian cycle
in $G_k$. Thus we may assume $N(v_1)=N(v_2)=\{u,w\}$. Let
\begin{equation}\label{0323}
\mbox{$H_0=H+uw$ if $uw\notin E(G)$ and $H_0=H$ otherwise.}
\end{equation}

If $x_m*y_m\notin N[v_1]\cup N[v_2]$, then $T_{G_{m+1}}(x_{m}y_{m})\leq 1$ (since  $T_{G_{m+1}}(v_1u_1)\leq 1$)
and  $G_{m+1}$ contains vertices $v_1$ and $v_2$ of degree $2$. So by Lemma~\ref{tl1} for $h=2$, $G_{m+2}$ also has
a vertex of degree~$2$. Thus by~\eqref{m254} for $r=2$
 instead of~\eqref{0324} we have
for every $m+2\leq j \leq n$,
\begin{equation}\label{03241}
\mbox{ $\delta(G_j)\leq \min\{j-m,t-1\}$
 and so $T_{G_{j}}(x_{j-1}y_{j-1})\leq  \min\{j-m-1,t-2\}$.}
\end{equation}
Plugging~\eqref{03241} instead of~\eqref{0324} into~\eqref{eqa1} for $r=2$, we
will instead of~\eqref{a2} get the stronger inequality
\begin{equation}\label{a21}
\frac{3t^2+t+10}{2}+(t-3)+\binom{t-2}{2}\leq 2^2+\binom{2t+2-2}{2}.
\end{equation}
Thus instead of~\eqref{m7} we have for $r=2$ the stronger
inequality
$t(2-3)+(t-3)\leq 2^2-4-4$,
 which does not hold. This contradiction implies $x_{m}*y_{m}\in N[v_1]\cup N[v_2]$.
By symmetry we have two cases.

{\bf Case 1:} $x_{m}*y_{m}=v_1$. As above, graph $H_0$ has a spanning cycle $C$ containing $uw$. If
\begin{equation}\label{m10}
x_{m}u,y_{m}w\in E(G_{m+1}),
\end{equation}
then $C$ extends to a $k$-cycle in $G_{m+1}$ by replacing $uw$ with path $u,x_{m},y_{m},w$. A similar situation holds if
\begin{equation}\label{m11}
x_{m}w,y_{m}u\in E(G_{m+1}).
\end{equation}
 But by degree conditions each of $x_{m},y_{m}$ has a neighbor in $\{u,w\}$. By definition, each of
$u,w$ has a neighbor in $\{x_{m},y_{m}\}$. So at least one of~\eqref{m10} and~\eqref{m11} holds.

{\bf Case 2:} $x_{m}*y_{m}=u$. If $d_{G_{m+1}}(v_1)=d_{G_{m+1}}(v_2)=2$, then as before we get~\eqref{a21}
instead of~\eqref{m7} and get a contradiction. So by symmetry we may assume that $v_1$ is adjacent to
both $x_{m}$ and $y_{m}$ in $G_{m+1}$. Since $G_m$ is $2$-connected, vertex $w$ does not separate
 $\{v_1, v_2,u\}$ from the rest of the graph. Thus by symmetry we may assume that
$y_{m}$  has a neighbor $z\in V(G_{m+1})\setminus\{x_{m}, v_1,v_2,w\}$.
Again
by~\eqref{m9}, graph $H_0$ defined by~\eqref{0323} has  a spanning cycle containing edges $uw$ and $uz$, and again this cycle yields a
$k$-cycle in $G_{m+1}$ (using path $w,v_1,x_{m},y_{m},z$), a contradiction.


\medskip

{\bf Proof for $k = 2t + 1$.}  We repeat the argument for $k = 2t + 2$, but instead of~\eqref{m8} and \eqref{a2}, we get
\begin{equation*}
\frac{3t^2-t+6}{2}+\binom{t-r}{2}\leq e(G_k)\leq r^2+\binom{2t+1-r}{2}.
\end{equation*}
Expanding the binomial terms and regrouping, similarly to~\eqref{m7}, we get
\begin{equation*}
t(r-2)\leq r^2-r-3.
\end{equation*}
The analysis of this inequality is simpler than that of~\eqref{m7}:
If $r=2$, then the left hand side  is $0$ and the right hand side is $-1$, while if $r\geq 3$, then dividing both sides
   by $r-2$ we get $t\leq r+1-  
   1/(r-2)$, which yields $r\geq t$, as claimed. \qed

\begin{lem}\label{lem1} Under the conditions of   Lemma~\ref{lem11},
 $G_k$ is a subgraph of the graph $H_{k,k,t}$.
\end{lem}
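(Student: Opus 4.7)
The plan is to identify in $G_k$ the partition $V(G_k) = A \cup B \cup C$ underlying $H_{k,k,t}$, with $A$ the set of high-degree vertices (of size $t$) and $B \cup C$ the low-degree side (of size $k - t$), and to show that the subgraph induced on $B \cup C$ carries essentially no edges. Concretely, $r(G_k) = t$ gives the Chv\'atal inequalities $d_t \leq t$ and $d_{k-t} \leq k-t-1$, so the sets $A_0 := \{v : d_{G_k}(v) \geq k-t\}$ and $B_0 := V(G_k) \setminus A_0$ satisfy $|A_0| \leq t$ and $|B_0| \geq k-t$. Summing degrees yields $2e(G_k) \leq |A_0|(k-1) + |B_0|t$, and comparing with $e(G_k) \geq h(k,k,t-1)+1$ (from Remark~1) pins $|A_0|$ into $\{t-1, t\}$; the minimality clause of $r(G_k)=t$ (that $d_i > i$ or $d_{k-i} \geq k-i$ fails only at $i=t$) tightens the structure further. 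Once $G_k[B_0]$ has been shown to be (nearly) edgeless, taking $A := A_0$ (augmented by one $B_0$-vertex in the $|A_0|=t-1$ subcase, chosen so as to absorb any residual star of $B_0$-edges), selecting a $(k-2t)$-subset $C \subseteq B_0 \setminus A$ and $B := B_0 \setminus (A \cup C)$ places every edge of $G_k$ either inside $A \cup C$ or between $A$ and $B$, giving $G_k \subseteq H_{k,k,t}$.

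The crux is the independence claim, which I would prove by contradiction. Assume some edge $uv$ lies inside $B_0$ beyond what $H_{k,k,t}$ can accommodate (so for $k=2t+1$ any edge, and for $k=2t+2$ a second one). The goal is then to exhibit a Hamiltonian cycle of $G_k$ through $uv$, directly contradicting non-hamiltonicity. The tool is the near-complete bipartite structure between $A_0$ and $B_0$ forced by the tight edge bound: for an appropriate balancing vertex $w \in B_0$, the bipartite graph $K := G_k(A_0, B_0 \setminus \{w\})$ has only $O(1)$ missing edges of the complete bipartite graph on its parts, and Corollary~\ref{zw2} (the Zamani--West bipartite analogue of P\'osa's theorem) supplies a Hamiltonian cycle of $K$ containing a prescribed short linear forest; splicing $w$ back in along the edge $uv$ closes this cycle into a Hamiltonian cycle of $G_k$. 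An alternative finish is to build a Hamiltonian $u,v$-path of $G_k$ alternating between $A_0$ and $B_0$ and then close it with the edge $uv$, invoking Kopylov's Theorem~\ref{le:kop} (whose conclusion $c(G_k) \geq \min\{\ell, d(u,P) + d(v,P)\}$ forces $c(G_k) \geq k$).

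The main obstacle is this independence step: the edge accounting only admits $O(1)$ edges inside $B_0$, and each candidate must be eliminated by producing a Hamiltonian cycle of $G_k$ through the prescribed edge $uv$. The subtlety is in parameter matching: for the hypothesis $|E(K)| \geq s^2-s+2+i$ of Corollary~\ref{zw2} to hold while the prescribed linear forest fits its size bound, the balancing vertex $w$ and the forest must be chosen with care, and one has to track the defects in $e_{A_0}$, $e_{A_0 B_0}$ and $e_{B_0}$ simultaneously. The case $k = 2t+2$ is further complicated because $H_{k,k,t}$ itself contains one edge inside $B \cup C$, so the argument must stop exactly one edge short of emptying $B_0$ and correctly identify which surviving edge of $G_k[B_0]$ plays the role of the $c_1c_2$ edge.
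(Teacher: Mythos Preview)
Your plan omits the one idea that makes the paper's proof work: passing to the $k$-closure $G' := Cl_k(G_k)$ and arguing there. In $G'$ every non-edge $uv$ satisfies $d_{G'}(u)+d_{G'}(v)\le k-1$, and this single inequality is what forces $G'[A]=K_t$ and makes the $A$--$B$ bipartite part essentially complete (see \eqref{eq3}--\eqref{eq4} and \eqref{eq3'}--\eqref{eq4'} in the paper). Once that is in hand, the paper finishes by showing that any linear forest with three (odd $k$) or four (even $k$) edges inside the low-degree part would extend to a Hamiltonian cycle of $G'$; the residual case analysis is short precisely because the closure has already rigidified the graph. Since $G_k\subseteq G'$, proving $G'\subseteq H_{k,k,t}$ suffices.

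Without the closure, your key claim that $G_k(A_0,B_0)$ misses ``only $O(1)$'' edges is false in general. Take $k=2t+1$ and $|A_0|=t$. Edge counting gives
\[
e(A_0,B_0)+e(B_0)\;\ge\; e(G_k)-\tbinom{t}{2}\;\ge\;\tfrac{3t^2-t+6}{2}-\tfrac{t^2-t}{2}=t^2+3,
\]
while $2e(B_0)+e(A_0,B_0)=\sum_{v\in B_0}d(v)\le t(t+1)$. Subtracting yields only $e(B_0)\le t-3$ and $e(A_0,B_0)\ge t^2-t+6$, i.e.\ up to $2t-6$ missing bipartite edges. After deleting a balancing vertex $w\in B_0$ you still need at most $t-4$ missing edges in the resulting $K_{t,t}$ to invoke Corollary~\ref{zw2} with $i=2$, and a pigeonhole on $d_{A_0}(w)$ does not produce such a $w$ when the $2t-6$ missing edges are spread evenly over $B_0$. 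The same obstruction hits the even case, where moreover $|A_0|$ and $|B_0|$ differ by $2$ and a single deletion does not balance the parts. Your alternative via Theorem~\ref{le:kop} presupposes a Hamiltonian $u,v$-path alternating between $A_0$ and $B_0$, which again requires the near-complete bipartite structure you have not established. The missing step is exactly the closure.
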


{\bf Proof for $k = 2t + 2$.}   By Lemma~\ref{lem11}, $r(G_k)=t$.
Let $G'$ be the $k$-closure of $G_k$ and $d'_1\leq d'_2\leq\ldots\leq d'_{k}$ be the vertex degrees in $G'$.
By the definition of the $k$-closure,
\begin{equation}\label{eq3}
d(u)+d(v)\leq k-1 \qquad \mbox{\em for every non-edge $uv$ in $G'$.}
\end{equation}

Since $d'_i\geq d_i$ for every $i$ and $G'$ is also non-hamiltonian, $r(G')\geq r(G_k)=t$. Since $r(G') \leq t$ from $r(G) < n/2$, $r(G')=t$.
Let $V(G')=\{v_1,\ldots,v_{k}\}$ where $d_{G'}(v_i)=d'_i$ for all $i$. By the definition of $r(G')$,  on the one hand $d'_t\leq t$ and $d'_{k-t}\leq k-t-1=t+1$,
on the other hand
either $d'_{t-1}>t-1$ or  $d'_{k-(t-1)}\geq k-(t-1)=t+3$.
In any case, $d'_{t+3}\geq t$.
Summarizing,
\begin{equation}\label{eq2}
d'_{t+3}\geq t,\; d'_t\leq t\; \mbox{\em and   }\;
d'_{t+1}\leq d'_{t+2}\leq t+1.
\end{equation}
Let $B=\{v_1,\ldots,v_{t+2}\}$ and $A=V(G')\setminus B$.
If $d'_{t+4}\leq t+2$, then $$\sum_{i=1}^{k}d'_i\leq (t|B|+2)+(t+2)2+(2t+1)(t-2)=3t^2+t+4,$$ a contradiction to $e(G_k) \geq h(k,k,t-1) + 1$. Thus
$d'_{t + 4} \geq t + 3$, and by~\eqref{eq3} and~\eqref{eq2}, $G'[A]=K_{t}$. In summary,
\begin{equation}\label{eq4}
d'_{t+4}\geq t+3 \quad \mbox{\em and } \quad G'[A]=K_{t}.
\end{equation}

Suppose that there are distinct $v_{i_1},v_{i_2}\in B$ and distinct $v_{j_1},v_{j_2}\in A$ such that  $v_{i_1}v_{j_1}$ and
$v_{i_2}v_{j_2}$ are non-edges in $G'$. Then by~\eqref{eq3} and~\eqref{eq2},
\begin{eqnarray*}
\sum_{i=1}^{2t+2}d'_i &\leq& (2t+1)2+t(|B|-2)+2+(2t+1)(|A|-2)\\
&=& 4t+2+t^2+2+2t^2-3t-2 \; \; = \; \; 3t^2+t+2.
\end{eqnarray*}
This contradicts $e(G_k) > h(k,k,t-1)$. So, some $v_j$ is incident with all non-edges of $G'$ connecting  $A$ with $B$.

 {\bf Case 1:} $j\leq t+2$, i.e. $v_j\in B$. Then each $v\in B-v_j$ has $t$ neighbors in $A$.
Thus each $v\in B \setminus \{v_j,v_{t+1},v_{t+2}\}$ has no neighbors
 in $B$, and each of $v_{t+1}, v_{t+2}$ has  at most one neighbor in $B$. If each of $v_{t+1}, v_{t+2}$ is adjacent to $v_j$,
 then $G'$ has a hamiltonian cycle using edges $v_{t+1}v_j$  and $v_j v_{t+2}$. Otherwise $G'[B]$ has at most one edge, as claimed.

  {\bf Case 2:} $j\geq t+3$, i.e. $v_j\in A$. Together with~\eqref{eq4}, this yields that
   $G'$ contains $K_{t-1,t+3}$ with partite sets $A \setminus\{v_j\}$ and $B \cup \{v_j\}$. In particular, all pairs of vertices in $A \setminus\{v_j\}$ are adjacent. So, $G'$ is obtained from $K_{2t+2}-E(K_{t+3})$ by adding at least $e(G') - {2t + 2 \choose 2} + {t + 3 \choose 2} \geq 7$ edges.
If $G'[B \cup \{v_j\}]$ contains a linear forest with four edges,
 then $G'$ has a hamiltonian cycle. So suppose
   \begin{equation}\label{eq5}
\mbox{\em $G'[B \cup \{v_j\}]$ contains no linear forests with four edges, }
\end{equation}

{\bf Case 2.1:}  $G'[B \cup \{v_j\}]$ contains a cycle $C$. By~\eqref{eq5}, $|C|\leq 4$ and if $|C|=4$, then each other
edge in $G'[B \cup \{v_j\}]$ has both ends in $V(C)$. Thus $G'[B \cup \{v_j\}]$ has at most $6$ edges, a contradiction.
So suppose $C=(x,y,z)$. If no other edge is incident with $V(C)$, then the set of the remaining at least four edges in $G'[B \cup \{v_j\}]$
contains a linear forest with two edges, a contradiction to~\eqref{eq5}. Thus we may assume that $G'[B \cup \{v_j\}]$
has an edge $xu$ where $u\notin \{y,z\}$. Then by~\eqref{eq5} and the fact that  $G'[B \cup \{v_j\}]$ contains no $4$-cycles, none of $u,y,z$ is incident with other edges.
On the other hand, if $G'[B \cup \{v_j\}]$ has an edge
not incident with $V(C)$, this would contradict~\eqref{eq5}. Hence $G'[B \cup \{v_j\} \setminus\{x\}]$ has only the edge $yz$, as claimed.

  {\bf Case 2.2:}  $G'[B \cup \{v_j\}]$ is a forest.  By~\eqref{eq5},
  there is $x\in B \cup \{v_j\}$ of degree at least $3$  in  $G'[B \cup \{v_j\}]$. If there is another vertex $y$ of degree at least $3$ in $G'[B \cup \{v_j\}]$,
 then we can choose two edges incident with $x$ and two edges incident with $y$ that together form a linear forest with four edges.
So $G'[B \cup \{v_j\}\setminus\{x\}]$ is a linear forest, call it $F$, and thus has at most $3$ edges. Each edge of $F$ has at most one end adjacent
to $x$ and the degree of $x$ in $G'[B \cup \{v_j\}]$ is at least four. So if $F$ has exactly $m\in \{2,3\}$ edges, then
we can choose $4-m$ edges incident with $x$ so that together with $F$ they form a linear forest. And if $F$  has at most
one edge, then the lemma holds.

\medskip

{\bf Proof for $k = 2t + 1$.}  The proof is almost identical to the case $k = 2t + 2$. By Lemma~\ref{lem11}, $r(G_k)=t$.
Let $G'$ be the $k$-closure of $G_k$ and $d'_1\leq d'_2\leq\ldots\leq d'_k$ be the vertex degrees in $G'$.
As in~\eqref{eq3}, we have
\begin{equation}\label{eq3'}
d(u)+d(v)\leq k - 1 = 2t \qquad \mbox{\em for every non-edge $uv$ in $G'$.}
\end{equation}

As in the proof in the case $k = 2t + 2$,  $r(G')=t$.
Let $V(G')=\{v_1,\ldots,v_k\}$ where $d_{G'}(v_i)=d'_i$ for all $i$. Instead of~\eqref{eq2}, we get
the stronger claim
\begin{equation}\label{eq2'}
d'_{t+2}\geq t\; \mbox{\em and   }\;d'_t\leq d'_{t+1}=t.
\end{equation}
Let $B=\{v_1,\ldots,v_{t+1}\}$ and $A=V(G')\setminus B$.
If $d'_{t+3}\leq t+1$, then $$\sum_{i=1}^{2t+1}d'_i\leq t|B|+(t+1)2+(2t)(t-2)=3t^2-t+2 \leq h(k,k,t-1),$$ a contradiction. Thus,
\begin{equation}\label{eq4'}
d'_{t+3}\geq t+2\; \mbox{  so by~\eqref{eq3'} and~\eqref{eq2'}, }\;
G'[A]=K_{t}.
\end{equation}

If there are distinct $v_{i_1},v_{i_2}\in B$ and distinct $v_{j_1},v_{j_2}\in A$ such that  $v_{i_1}v_{j_1}$ and
$v_{i_2}v_{j_2}$ are non-edges in $G'$, then by~\eqref{eq3'} and~\eqref{eq2'},
$$\sum_{i=1}^k d'_i\leq (2t)2+t(|B|-2)+(2t)(|A|-2)=4t+t^2-t+2t^2-4t=3t^2-t \leq h(k,k,t-1),$$
 a contradiction. So, some $v_j$ is incident with all non-edges of $G'$ connecting  $A$ with $B$.

 {\bf Case 1:} $j\leq t+1$, i.e. $v_j\in B$. Then each $v\in B-v_j$ has $t$ neighbors in $A$.  Thus by~\eqref{eq2'},
 each $v\in B-v_j$
 has no neighbors
 in $B$, hence $B$ is independent, as claimed.

  {\bf Case 2:} $j\geq t+2$, i.e. $v_j\in A$. Together with~\eqref{eq4'}, this yields that
   $G'-v_j$ contains $K_{t-1,t+2}$ with partite sets $A \setminus\{v_j\}$ and $B \cup \{v_j\}$. In particular, each vertex in $A \setminus\{v_j\}$ is
   all-adjacent. So, $G'$ is obtained from $K_k - E(K_{t+2})$ by adding at least four edges.
If $G'[B \cup \{v_j\}]$ contains a linear forest with three edges,
 then $G'$ has a hamiltonian cycle. Every graph with at least four edges not containing a linear forest with three edges is a star plus isolated vertices.
 And if $G'[B \cup \{v_j\}]$ is a star plus isolated vertices, then
  $G'\subseteq H_{k,k,t}$.  \qed

\subsubsection{The case $m > k$.}

\begin{lem}\label{nlem} Let $m > k \geq 9$.
\begin{center}
\begin{tabular}{lp{5in}}
$(1)$ & If $k \neq 10$, then $G_m \subseteq  H_{m,k,t}$. \\
$(2)$ & If $k = 10$ then $G_m \subseteq  H_{m,k,t}$ or $G_m \supseteq F_4$.
\end{tabular}
\end{center}
\end{lem}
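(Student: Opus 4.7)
The plan is to pin down $\delta(G_m)=t$ exactly, reduce to a 3-connected situation using rule (R3), and then apply Enomoto's Theorem~\ref{t24} to force the claimed structure.

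First I would show $\delta(G_m)=t$. By Lemma~\ref{l3o}, every vertex $v\in V(G_m)$ has a neighbor $w$ such that $G_m/vw$ is 2-connected; since rule (R2) is blocked at $G_m$, $T_{G_m}(vw)\ge t-1$ and hence $d_{G_m}(v)\ge t$. Combined with Dirac's Theorem~\ref{th:dirac}, the hypothesis $m>k$, and $c(G_m)<k\le 2t+2$, we conclude $\delta(G_m)=t$ exactly.

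Next I would handle 2-cuts using rule (R3). For a 2-cut $\{u,v\}$ with components $H_1,\ldots,H_p$ of $G_m-\{u,v\}$, the fact that (R3) is blocked gives either $m<k+t-1$, $p\le 2$, or no $H_i$ isomorphic to $K_{t-1}$. Using Kopylov's path-to-cycle Lemma~\ref{le:kop} on $x,y$-paths routed through $\{u,v\}$, in each subcase either a cycle of length at least $k$ emerges (contradicting the hypothesis) or the separation is restricted enough to be consistent with $G_m\subseteq H_{m,k,t}$. In the remaining 3-connected case, apply Theorem~\ref{t24} with $s=k-1$ (odd $k$) or $s=k$ (even $k$); the degree-sum condition $d(x)+d(y)\ge s$ at non-adjacent pairs holds automatically for odd $k$ from $\delta(G_m)=t$ and $s=2t$, and for even $k$ can be enforced after removing a small exceptional set of degree-$t$ vertices whose size is controlled by $e(G_m)\ge h(m,k,t-1)+1$. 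If Enomoto's main conclusion (a long path between every pair) holds, then combined with 2-connectivity and Lemma~\ref{le:kop} we obtain a cycle of length at least $k$, a contradiction; otherwise $G_m$ lands in one of the two exceptional structures.

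The first exception, $\overline{K_{s/2}}+\overline{K_{m-s/2}}\subseteq G_m\subseteq K_{s/2}+\overline{K_{m-s/2}}$, coincides with $K_t+\overline{K_{m-t}}=H_{m,2t+1,t}$ for odd $k$, yielding the claim directly; for even $k$ it puts $G_m$ inside the slightly larger graph $K_{t+1}+\overline{K_{m-t-1}}$, and since the upper graph contains $k$-cycles while $c(G_m)<k$, analyzing which edges at degree-$t$ vertices must be missing forces $G_m\subseteq H_{m,k,t}$. The second exception, $\overline{K_3}+\ell K_{s/2-2}\subseteq G_m\subseteq K_3+\ell K_{s/2-2}$ with $\ell=2(m-3)/(s-4)$ integer, has longest cycle $3s/2-3$ when $\ell\ge 3$; requiring $c(G_m)<k$ together with $e(G_m)\ge h(m,k,t-1)+1$ eliminates all parameters for $k\ge 9$ except $k=10$, $s=8$, $s/2-2=2$, $\ell\ge 3$, in which case $G_m\supseteq\overline{K_3}+3K_2=F_4$, matching conclusion (2) of the lemma.

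The principal obstacle will be verifying Enomoto's degree-sum hypothesis at pairs of minimum-degree vertices, which have degree sum $2t<k$: the edge-count bound $e(G_m)\ge h(m,k,t-1)+1$ must be leveraged to show that such non-adjacent pairs form a small controlled subset that can be handled separately before Theorem~\ref{t24} is applied. In the even-$k$ case, additional care is needed to cut the upper bound $K_{t+1}+\overline{K_{m-t-1}}$ down to $H_{m,k,t}$ by identifying which vertex must fail to have its full set of edges. Finally, the exception leading to $F_4$ when $k=10$ requires a direct parameter check to confirm no other value of $k\ge 9$ admits this structure.
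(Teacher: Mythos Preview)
Your plan has a genuine structural gap: you propose to apply Enomoto's Theorem~\ref{t24} to $G_m$ itself, but the paper applies it to $G_m[V(C)]$, the subgraph induced on a longest cycle $C$. This distinction is essential, and your route runs into trouble on three fronts.

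First, you cannot establish that $G_m$ is $3$-connected. Rule~(R3) being blocked only excludes $2$-cuts $\{u,v\}$ for which $G_m-u-v$ has at least three components one of which is $K_{t-1}$; it says nothing about $2$-cuts with two components, or with three components none of which is $K_{t-1}$. Your sentence ``Kopylov's path-to-cycle Lemma~\ref{le:kop} on $x,y$-paths routed through $\{u,v\}$'' does not supply the missing argument. The paper instead proves that $G_m[V(C)]$ is $3$-connected, using that every edge of $C$ lies in at least $t-1$ triangles (Lemma~\ref{tr3}), that $|V(C)|\in\{2t,2t+1\}$, and finally an edge-count contradiction against $e(G_m)\ge h(m,k,t-1)+1$ to rule out the residual $2$-cut cases; rule (R3) enters only to bound $m\le 3t$ in that computation.

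Second, even granting $3$-connectivity, your ``main conclusion'' case does not yield a contradiction. If Enomoto gives an $x,y$-path of length $\ge s-1$ for every pair in $G_m$, Lemma~\ref{le:kop} does not convert this to a $k$-cycle: you would need $d(x,P)+d(y,P)\ge k$ on such a path, which is not available. The paper's Case~3 works precisely because $|V(C)|\le 2t+1$ is small and there exist vertices in $W=V(G_m)\setminus V(C)$ (since $m>k$); those outside vertices, all of degree exactly $t$ with neighborhood a clique (by~\eqref{tr2}), are what force the structure $G_m\subseteq H_{m,k,t}$.

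Third, for even $k$ you set $s=k=2t+2$, but $\delta(G_m)=t$ gives only $d(x)+d(y)\ge 2t$ at non-adjacent pairs; your proposal to ``remove a small exceptional set of degree-$t$ vertices'' is neither justified nor compatible with maintaining $3$-connectivity. The paper uses $s=2t$ for both parities, applied to the small graph $G_m[V(C)]$ where \eqref{tr4} gives minimum degree $\ge t$; the second Enomoto exception then has $\ell=2(q-3)/(2t-4)$ with $q\le 2t+1$, which forces $\ell=3$, $t=4$, $k=10$, and hence $F_4\subseteq G_m$.
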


{\bf Proof for $k = 2t + 2$.}
$G_m$ is an $m$-vertex $2$-connected graph with  $c(G_m)\leq 2t+1$
satisfying $e(G) \geq h(n,k,t-1) + 1$.
Since (R2) is not applicable,
\begin{equation}\label{tr1}
\mbox{\em $T_{G_m}(xy)\geq t-1$ for every non-separating edge $xy$. }
\end{equation}
By Lemmas~\ref{l3o} and~\ref{l2o},~\eqref{tr1} implies
\begin{equation}\label{tr2}
\mbox{\em $\delta(G_m)\geq t$ and for each $v\in V(G_m)$ with $d(v)=t$, $G_m[N(v)]=K_{t+1}$. }
\end{equation}

Let $C=(v_1,\ldots,v_q)$ be a longest cycle in $G_m$.
Since $\delta(G_m)\geq t$, Dirac's Theorem (Theorem~\ref{th:dirac}) yields $q\geq 2t$.
Obviously, $q\leq 2t+1$.

By~\eqref{tr1} and Lemma~\ref{tr3}, each edge of $C$ is in at least $t-1$ triangles. By the maximality of $C$, the third vertex of each such triangle
is in $V(C)$. So
\begin{equation}\label{tr4}
\mbox{\em the minimum degree of $G_m[V(C)]$ is at least $t$.}
\end{equation}

We now prove that
\begin{equation}\label{tr5}
\mbox{\em $G_m[V(C)]$ is $3$-connected.}
\end{equation}

Indeed, assume~\eqref{tr5} fails and $G_m[V(C)]$ has a separating set $S$ of size $2$.
By symmetry, we may assume that  $S=\{v_1,v_j\}$ and that $j\leq \lfloor q/2\rfloor+1\leq t+1$.
Then by~\eqref{tr4}, $j=t+1$ and $G_m[\{v_1,\ldots,v_{t+1}\}]=K_{t+1}$. In particular,
\begin{equation}\label{m25}
v_1v_{t+1}\in E(G_m).
\end{equation}
Let $H_1=G_m[\{v_1,\ldots,v_{t+1}\}]$ and $H_2=G_m[\{v_{t+1},\ldots,v_{q},v_1\}]$. Similarly to $H_1$, graph $H_2$ is
either $K_{t+1}$ (when $q=2t$) or is obtained from $K_{t+2}$ by deleting some matching (when $q=2t+1$).

Concerning almost complete graphs we need the following statement which is an easy consequence of Theorem~\ref{t21}
(or one can prove it directly).

\begin{equation}\label{tr6a}
\parbox{4.5in}{\em For $p \geq 6$ and for any matching $M \subseteq K_p$, every two edges of $K_p - M$ are in a common hamiltonian cycle of $K_p - M$.}
\end{equation}

Since $G_m$ is $2$-connected, each component $F$ of $G_m-V(C)$ has at least two neighbors, say $y(F)$ and $y'(F)$, in $C$.
If at least one of them, say $y'(F)$, is not in $S=\{v_1,v_{t+1}\}$, then we can construct a cycle longer than $C$ as follows.

If $y(F)\in V(H_1)\setminus\{v_1,v_{t+1}\}$ and
$y'(F)\in V(H_2)\setminus\{v_1,v_{t+1}\}$, then $H_1-v_{t+1}$ has a hamiltonian $v_1,y(F)$-path $P_1$ (recall that $H_1-v_{t+1}$ is a complete graph),
and $H_2$ has a hamiltonian $v_1,y'(F)$-path $P_2$, by (\ref{tr6a}) and since $k \geq 4$.
So $P_1 \cup P_2$ and a $y(F),y'(F)$-path through $F$ form a longer than $C$
cycle in $G_m$.

If both, $y(F)$ and $y'(F)$ are in the same $H_j$, then we let $H'_j$ be the graph obtained from $H_j$ by adding the
 edge $y(F)y'(F)$. Recall that by~\eqref{m25}, $v_1v_{t+1}\in E(H_j)$.
If we have a hamiltonian cycle $C'$ in $H'_j$ containing $y(F)y'(F)$ and $v_1v_{t+1}$,
 then let $P$ be the  $v_1,v_{t+1}$-path obtained from $C'$ by
deleting edge $v_1v_{t+1}$ and  replacing edge $y(F)y'(F)$  with a $y(F),y'(F)$-path $P'$ through $F$, and then replace in $C$ the
$v_1,v_{t+1}$-path through $V(H_j)$ with the longer path $P$.
There is such a $C'$ if $|V(H_j)|\geq 6$ by \eqref{tr6a}, and also if $|V(H_j)|=5$ because in the latter case
$|V(H_j)|=t+1$ with $t=4$ and it is a complete graph.

Thus every component $F$ of $G_m-V(C)$ is adjacent only to $S$, and $S$ is a separating set in $G_m$.
In particular, $H_1-S=K_{t-1}$ and $H_2-S$ are components of $G_m-S$.
So, if $m\geq 3t+1$, then Rule (R3) is applicable, contradicting the definition of $G_m$. Hence $2t+2\leq m\leq 3t$.
On the other hand, by~\eqref{tr2}, every component of $G_m-S$ has at least $t-1$ vertices, and so
 $m-q\geq t-1$. Therefore,  $3t-1\leq m\leq 3t$.

If $m=3t-1$,
 then $q=2t$, $H_2=K_{t+1}$ and $H_3:=G_m-(V(C)-S)=K_{t+1}$. Hence
\begin{multline*}
   e(G_m)-h(m,k,t-1)-1=3\binom{t+1}{2}-2-h(3t-1,k,t-1)-1\\
    =\frac{3t^2+3t-4}{2}-\frac{5t^2-7t+16}{2}=-t^2+5t-10<0.
  \end{multline*}
Similarly, if $m=3t$,
 then the component sizes of $G_m-S$ are $t,t-1,t-1$. Thus in this case
\begin{multline*}
   e(G_m)-h(m,k,t-1)-1\leq t^2+t+\binom{t+2}{2}-2-h(3t,k,t-1)-1\\
   =\frac{3t^2+5t}{2}-1-\frac{5t^2-5t+14}{2}=-t^2+5t-8<0.
  \end{multline*}
These contradictions prove~\eqref{tr5}.

So by~\eqref{tr5} and Theorem~\ref{t24} for $n=q$, $s=2t$ and $H=G_m[V(C)]$, one of three cases below holds:\smallskip

{\bf Case 1:}  $  \overline{K_{t}}+\overline{K_{q-t}}\subseteq  G_m[V(C)]\subseteq  K_{t}+\overline{K_{q-t}}$.
Let $B$ be the independent set of size $q-t$ in $G_m[V(C)]$ and $A=V(C)\setminus B$.
In this case, since $G_m[V(C)]$ has hamiltonian cycle $C$ and an independent set $B$ of size $q-t$, we need $q=2t$.

Suppose that $G_m-V(C)$ has a component  $D$ with at least two vertices. By Menger's Theorem, there are two fully disjoint paths,
say $P_1$ and $P_2$,
connecting some two distinct vertices, say $u$ and $v$, of $D$ with two distinct vertices, say $x$ and $y$, of $C$. Since
$G_m[V(C)]$ contains $K_{t,t}$, it has
 an $x,y$-path with at least $2t-1$ vertices. This path together with $P_1, P_2$ and a $u,v$-path in $D$ form
a cycle of length at least $2t+1$, a contradiction to the maximality of $C$.
Thus each component of $G_m-V(C)$ is a single vertex and is adjacent either only to
vertices in $A$ or only to vertices in $B$. Moreover, by~\eqref{tr2}, each such vertex  has degree exactly $t$, and thus
its neighborhood is a complete graph. Since $B$ is independent, each $v\in V(G_m)-C$ is adjacent only to vertices in $A$.
Thus $G_m=K_m-E(K_{m-t})=H_{m,k-1,t}\subseteq  H_{m,k,t}$.\smallskip

{\bf Case 2:}  $ \overline{K_{3}}+\ell {K_{t-2}}\subseteq  G_m[V(C)] \subseteq  K_{3}+\ell {K_{t-2}},
$
where $\ell= 2(q-3)/(2t-4)$.    
Again, since $G_m[V(C)]$ has hamiltonian cycle $C$ and a separating set of size $3$
(call this set $A$), $\ell\leq 3$. If $\ell\leq 2$, then $q\leq 3+2(t-2)<2t$, a contradiction. Thus,
$\ell=3$ and $q=3+3(t-2)=3t-3$. Since $2t\leq q\leq 2t+1$, we get $t\in \{3,4\}$.
Since $t \geq 4$ by assumption, we obtain that $t=4$ and $F_4 \subseteq  G_m$. \smallskip

{\bf Case 3:}  {\em For every two distinct $x,y\in V(C)$, the graph  $G_m[V(C)]$ contains an $x,y$-path with at least $2t$ vertices.}
Let $W=V(G_m)-V(C)$. Repeating the argument of the second paragraph of  Case 1, we obtain that in our case
\begin{equation}\label{03253}
\mbox{\em each component of $G_m[W]$ is a singleton and so $N(w)\subseteq  V(C)$ for each $w\in W$.}
\end{equation}
Since no $w\in W$ is adjacent to two consecutive vertices of $C$ (by the maximality of $C$) and $q\leq 2t+1$, by~\eqref{tr2},
\begin{equation}\label{a12}
\mbox{\em $d_{G_m}(w)=t$ for every $w\in W$. }
\end{equation}
Fix some $w_1\in W$. Then we may relabel the vertices of $C$ so that
$N_{G_m}(w_1)=\{v_1,v_3,v_5,\ldots,v_{2t-1}\}$. By~\eqref{tr2}, this also yields $G_m[\{v_1,v_3,\ldots,v_{2t-1}\}]=K_t$
and thus $d_{G_m}(v_i)\geq t+1$ for all $i\in \{1,3,\ldots,2t-1\}$. In particular,
\begin{equation}\label{a13}
\mbox{\em $d_{G_m}(v)\geq t+1$ for every $v\in N_{G_m}(w_1)$. }
\end{equation}
 Then for every $j\in \{2,4,\ldots,2t-2\}$ (and for $j=2t$ in the case $q=2t$)
we can replace $v_j$ with $w_1$ in $C$ and obtain another longest cycle. By~\eqref{a12} and~\eqref{03253}, this yields $d_{G_m}(v_j)=t$
and
\begin{equation}\label{03254}
\mbox{\em
 $N_{G_m}(v_j)\subseteq  V(C)$ for all  $j\in \{2,4,\ldots,2t-2\}$ (and for $j=2t$ in the case $q=2t$).}
\end{equation}

{\bf Case 3.1:} $q=2t$. Switching the roles of $w_1$ with $v_j$ together with~\eqref{a13} yields
\begin{equation}\label{a14}
\mbox{\em $N_{G_m}(v_j)=\{v_1,v_3,v_5,\ldots,v_{2t-1}\}$ for all $j=2,4,\ldots,2t$.}
\end{equation}
 By~\eqref{a12} and~\eqref{a14},
$N_{G_m}(w)=\{v_1,v_3,v_5,\ldots,v_{2t-1}\}$ for all $w\in V(G_m)-\{v_1,v_3,v_5,\ldots,v_{2t-1}\}$. This means
$G_m\subseteq  H_{m,2t+2,t}$, as claimed.

{\bf Case 3.2:} $q=2t+1$. Since $m\geq 2t+3$, there is $w_2\in W-w_1$. By~\eqref{03254}, vertex $w_2$
is not adjacent to $v_j$ for $j\in \{2,4,\ldots,2t-2\}$. Suppose that $w_2$ is adjacent to $v_{2t}$ or $v_{2t+1}$, say $w_2v_{2t}\in E(G_m)$.
Then by the maximality of $C$, $w_2v_{2t+1},w_2v_{2t-1}\notin E(G_m)$. So the only possible $t$-element set of neighbors of $w_2$
is $\{v_1,v_3,\ldots,v_{2t-3},v_{2t}\}$. But then $G_m$ has the $(2t+2)$-cycle $(w_2,v_3,v_4,v_5,\ldots,v_{2t-1},w_1,v_1,v_{2t+1},v_{2t},w_2)$,
a contradiction. Thus
\begin{equation}\label{a15}
\mbox{\em
$N_{G_m}(w)=\{v_1,v_3,v_5,\ldots,v_{2t-1}\}$ for all $w\in W$.}
\end{equation}
Since we can replace in $C$ any $v_j$ for $j\in \{2,4,\ldots,2t-2\}$ with $w_1$,~\eqref{a15} yields
$N_{G_m}(v_j)=\{v_1,v_3,v_5,\ldots,v_{2t-1}\}$ for all $j=2,4,\ldots,2t-2$. It follows that
$\{v_1,v_3,v_5,\ldots,v_{2t-1}\}$ covers all edges in $G_m$ apart from edge $v_{2t}v_{2t+1}$.
This means $G_m\subseteq  H_{m,2t+2,t}$, as claimed.

\medskip

{\bf Proof for $k = 2t + 1$.} Similarly to the proof for $k = 2t + 2$, we have~\eqref{tr1} and~\eqref{tr2}.
Let $C=(v_1,\ldots,v_q)$ be a longest cycle in $G_m$.
Since $\delta(G_m)\geq t$, by Theorem \ref{th:dirac}, $q\geq 2t$; so
  $c(G_m) < k$ yields $q = 2t$. Then repeating the argument for $k = 2t + 2$, we obtain~\eqref{tr4} and
finally~\eqref{tr5}. So by Theorem~\ref{t24} for $n=s=2t$ and $H=G_m[V(C)]$, one of three cases below holds:
\medskip

{\bf Case 1:}  $  \overline{K_{t}}+\overline{K_{t}}\subseteq  G_m[V(C)]\subseteq  K_{t}+\overline{K_{t}}$.
As in the proof for $k = 2t + 2$, we derive $G_m=K_m-E(K_{m-t})= H_{m,k,t}$.
\medskip

{\bf Case 2:}  $ \overline{K_{3}}+\ell {K_{t-2}}\subseteq  G_m[V(C)] \subseteq  K_{3}+\ell {K_{t-2}},
$
where $\ell=    
    2(2t-3)/(2t-4)$.
Again, since $G_m[V(C)]$ has hamiltonian cycle $C$ and a separating set of size three
(call this set $A$), $\ell\leq 3$. Since $t \geq 4$, $\ell \neq 3$. If $\ell\leq 2$, then $q\leq 3+2(t-2)<2t$, a contradiction.
\medskip

{\bf Case 3:}  For every two distinct $x,y\in V(C)$, graph  $G_m[V(C)]$ contains a hamiltonian $x,y$-path.
Then for any component $H$ of $G_m-V(C)$, let $x$ and $y$ be neighbors of $H$ in $V(C)$. By the case,
$G_m[V(C)]$ contains a $2t$-vertex path, say $P$. Then $P$ together with an $x,y$-path through $H$ forms
a cycle with at least $k$ vertices, a contradiction. But since $m > k$, such a component $H$ does exist. \qed

\subsection{Subgraphs of $G_m$}

In this section, we define classes of graphs which we shall show are subgraphs of $G_m$, and these subgraphs will have the important
property that they have many long paths and are preserved by the reverse of the contraction process in the Basic Procedure.

\medskip

For a graph $F$ and a nonnegative integer $s$, we denote by $\mathcal{K}^{-s}(F)$ the family of graphs obtained from $F$ by deleting at most $s$ edges.

Let  $F_0=F_0(t)$ denote the complete bipartite graph $K_{t,t+1}$ with partite sets $A$ and $B$ where $|A|=t$ and $|B|=t+1$.
Let $\mathcal{F}_0:=\mathcal{K}^{-t+3}(F_0)$, i.e., the family of subgraphs of $K_{t,t+1}$ with at least $t(t+1)-t+3$ edges.

Let $F_1=F_1(t)$ denote the complete bipartite graph $K_{t,t+2}$ with partite sets $A$ and $B$ where $|A|=t$ and $|B|=t+2$.
Let $\mathcal{F}_1:=\mathcal{K}^{-t+4}(F_1)$, i.e., the family of subgraphs of $K_{t,t+2}$ with at least $t(t+2)-t+4$ edges.

Let $\mathcal{F}_2$ denote the family of graphs obtained from a graph in  $\mathcal{K}^{-t+4}(F_1)$
by subdividing an edge $a_1b_1$ with a new vertex $c_1$, where $a_1\in A$ and $b_1\in B$.
Note that any member $H\in \mathcal{F}_2$ has at least $|A||B|-(t-3)$ edges between $A$ and $B$ and the
 pair $a_1b_1$ is not an edge.

Let $F_3=F_3(t,t')$ denote the complete bipartite graph $K_{t,t'}$ with partite sets $A$ and $B$ where $|A|=t$ and $|B|=t'$.
Take a graph from  $\mathcal{K}^{-t+4}(F_3)$,  select two non-empty subsets $A_1$, $A_2\subseteq  A$ with $|A_1\cup A_2|\geq 3$
 such that $A_1\cap A_2=\emptyset$ if $\min\{ |A_1|, |A_2|\}=1$,
 add two vertices $c_1$ and $c_2$, join them to each other and add the edges from $c_i$ to the elements of $A_i$, ($i=1,2$).
The class of obtained graphs is denoted by ${\mathcal F}(A,B,A_1, A_2)$.
The family $\mathcal{F}_3$ consists of these graphs when $|A|=|B|=t$,
 $|A_1|=|A_2|=2$ and $A_1\cap A_2=\emptyset$. In particular, $\mathcal{F}_3(4)$ consists of exactly one graph, call it $F_3(4)$.

Recall that $F_4$ is a $9$-vertex graph with vertex set $A\cup B$, $A=\{a_1,a_2,a_3\}$, $B:=\{ b_1, b_2, \dots , b_6\}$ and edges
 of the complete bipartite graph $K(A,B)$ and three extra edges $b_1b_2$, $b_3b_4$, and $b_5b_6$.
Define $F_4'$ as the (only) member of ${\mathcal F}(A,B,A_1, A_2)$ where $|A|=|B|=t=4$, $A_1=A_2$, and $|A_i|=3$.
Let $\mathcal{F}_4:= \{ F_4, F'_4\}$, which is defined only for $t=4$.

In this subsection we will prove two useful  properties of graphs
 in $\mathcal{F}_0\cup\dots\cup \mathcal{F}_4$:  First we show that $G_m$ contains one of them (Proposition~\ref{co28}) and
 then show that such graphs
 have long paths with given end-vertices (Lemma~\ref{lem111}).

\begin{prop}\label{co28}
Let $k \geq 9$. If $k$ is odd, then $G_m$ contains a member of $\mathcal{F}_0$, and if $k$ is even then $G_m$ contains a member of $\mathcal{F}_1\cup\dots \cup \mathcal{F}_4$.
 \end{prop}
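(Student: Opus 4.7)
The plan is to split into the cases $m=k$ and $m>k$ and exploit the structural descriptions of $G_m$ supplied by Lemmas~\ref{lem1} and~\ref{nlem}.

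In the case $m=k$, Lemma~\ref{lem1} gives $G_k\subseteq H_{k,k,t}$. Combining the edge lower bound~\eqref{0325} with the difference formula~\eqref{j13} shows that $G_k$ is missing at most $t-3$ edges when $k=2t+1$, and at most $t-4$ edges when $k=2t+2$, relative to $H_{k,k,t}$. In $H_{k,k,t}$ the sets $A$ (of size $t$) and $B\cup C$ (of size $t+1$ or $t+2$ respectively) induce a complete bipartite graph $K_{t,t+1}$ or $K_{t,t+2}$. Since every missing edge reduces the chosen bipartite subgraph by at most one, $G_k$ contains a member of $\mathcal{F}_0$ or $\mathcal{F}_1$, as required.

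Now suppose $m>k$. Lemma~\ref{nlem} gives either $G_m\subseteq H_{m,k,t}$, or else $G_m\supseteq F_4$ (possible only when $k=10$), in which case $F_4\in\mathcal{F}_4$ and we are done. So assume $G_m\subseteq H_{m,k,t}$. The key structural fact from the proof of Lemma~\ref{nlem}, namely~\eqref{tr1} and~\eqref{tr2}, is that $\delta(G_m)\geq t$ and every vertex of degree exactly $t$ has its closed neighborhood inducing $K_{t+1}$. In $H_{m,k,t}$ each $v\in B$ has degree at most $t$, so it has degree exactly $t$ in $G_m$, is adjacent to all of $A$, and $A\cup\{v\}$ is a clique. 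Hence $A$ is a clique in $G_m$ and the bipartite graph between $A$ and $B$ is complete.

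If $k=2t+1$ then $|B|=m-t-1\geq t+1$, so $A$ together with any $t+1$ vertices of $B$ forms $F_0\in\mathcal{F}_0$. If $k=2t+2$ and $|B|\geq t+2$ (i.e., $m\geq 2t+4$), the analogous choice produces $F_1\in\mathcal{F}_1$. The remaining subcase is $k=2t+2$ with $m=2t+3$, where $|B|=t+1$ forces us to incorporate the vertices of $C=\{c_1,c_2\}$. Since $d_{G_m}(c_i)\geq t$ and $N(c_i)\subseteq A\cup\{c_{3-i}\}$, each $c_i$ misses at most one edge to $A$. A short case analysis according to whether $c_1,c_2$ each miss zero, the same, or distinct vertices of $A$ identifies $G_m$ as containing either $F_1$, $F_4'$, or a member of $\mathcal{F}_3$; for $t\geq 5$ the deficiency tolerance $t-4$ of $\mathcal{F}_1$ already absorbs any missing edges.

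The main difficulty will be the corner $k=10$, $m=11$, where the tolerance $t-4=0$ of $\mathcal{F}_1$ is tight and the exotic classes $\mathcal{F}_3$ and $\mathcal{F}_4$ become essential. Confirming that each neighborhood configuration of $c_1,c_2$ inside $A$ realizes a subgraph matching the prescribed $F_3(4)$ or $F_4'$ requires verifying the disjointness (or equality) conditions on the associated subsets $A_1,A_2\subseteq A$ demanded by the definitions of $\mathcal{F}_3$ and $\mathcal{F}_4$.
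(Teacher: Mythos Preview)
Your proposal is correct and follows essentially the same route as the paper's proof: invoke Proposition~\ref{gmprop} (equivalently Lemmas~\ref{lem1} and~\ref{nlem}), then for $m=k$ use the edge count~\eqref{j13} to bound the number of missing edges in the bipartite piece, and for $m>k$ use $\delta(G_m)\ge t$ to force $B$ to be completely joined to $A$. The only organisational difference is that in the even case the paper does not split at $m=2t+3$ versus $m\ge 2t+4$; instead it uses $B\cup\{c_1\}$ (which already has $\ge t+2$ vertices whenever $m>k$) to obtain a member of $\mathcal{K}^{-1}(F_1)\subseteq\mathcal{F}_1$ for all $t\ge 5$ in one stroke, and then handles the single exceptional situation $t=4$ with $c_1$ missing a neighbour in $A$. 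Your extra observation that $A$ is a clique (via~\eqref{tr2}) is true but not needed here.
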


\begin{proof}
By Proposition \ref{gmprop}, $G_m \subseteq  H_{m,k,t}$ or $m > k = 10$ and $F_4 \subseteq  G_m$.
In the latter case, the proof is complete. So assume  $G_m\subseteq  H_{m,k,t}$ and $A,B,C$ are as in the definition of $H_{m,k,t}$.
First suppose $k$ is even and $C=\{c_1,c_2\}$. If $m=k$ then by~\eqref{j13},
 $$e(H_{m,k,t})-e(G_m) \leq h(m,k,t)-h(m,k,t-1)-1=t - 4,$$ i.e.  $G_m\in\mathcal{K}^{-t+4}(H_{m,k,t})$.
Since $F_1(t)\subseteq  H_{m,k,t}$, $G_m$ contains a subgraph in $\mathcal{F}_1$.
If $m > k$ then by (R2) and Lemma~\ref{l3o}, we have $\delta(G_m) \geq t$. So, each $v\in B$ is adjacent to every $u\in A$ and each of $c_1,c_2$ has
at least $t-1$ neighbors in $A$. Since $|B \cup \{c_1\}|\geq m-t-1\geq t+2$, $G_m$ contains a member of $\mathcal{K}^{-1}(F_1(t))$.
Thus $G_m$ contains a member of $\mathcal{F}_1$ unless $t=4$, $m=2t+3$ and $c_1$ has a nonneighbor $x\in A$. But then $c_1c_2\in E(G_m)$,
and so $G_m$ contains either $F_3(4)$ or $F'_4$.

 Similarly, if $k$ is odd and $m=k$, then  by~\eqref{j13}, $G_m\in\mathcal{K}^{-t+3}(H_{m,k,t})$.
 Thus, since $H_{m,k,t}\supseteq F_0(t)$, $G_m$ contains a subgraph in $\mathcal{F}_0$.
 If $k$ is odd and $m > k$ then by (R2) we have $\delta(G_m) \geq t$. So, each $v\in V(G_m)-A$ is adjacent to every $u\in A$.
 Hence $G_m$ contains $K_{t,m-t}$.
\end{proof}

In order to prove Lemma~\ref{lem111}, we will use Corollary~\ref{zw2} and the following implication of it.

\begin{lem}\label{lem111a}
Let $t\geq 4$ and $H\in {\mathcal F}(A,B,A_1, A_2)$ with $|B|\geq t-1$, $|A|=t$.
Let $P$ be a path $a_1c_1c_2a_2$ and $L$ be a subtree of $H$ with $|E(L)|\leq 2$ such that $P\cup L$ form a linear forest.
Then
\begin{equation}\label{eq22}
   \mbox{\em $H$ has a cycle  $C$ of length $2t+1$ containing $P\cup L$}.
\end{equation}
\end{lem}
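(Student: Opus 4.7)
I would prove Lemma~\ref{lem111a} by reducing to a balanced bipartite Hamiltonian-cycle problem and applying Corollary~\ref{zw2}. A cycle $C$ of length $2t+1$ in $H$ through $P$ must enter and leave $P$ at $a_1$ and $a_2$, so the task is equivalent to finding an $a_1,a_2$-path $Q$ in $H - c_1 - c_2$ on $2t-1$ vertices (hence of length $2t-2$, using all of $A$ and exactly $t-1$ vertices of $B$) that contains $L$. Note that $H - c_1 - c_2$ is $K(A,B)$ with at most $t-4$ edges removed, and the linear-forest hypothesis on $P\cup L$ forces $L \subseteq H - c_1 - c_2$.

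First, pick $B'\subseteq B$ with $|B'|=t-1$ and $V(L)\cap B\subseteq B'$; this is possible since $|V(L)\cap B|\leq 2\leq t-1$. Introduce a new vertex $z$ and form the bipartite graph $K'$ with parts $A$ and $B'\cup\{z\}$, inheriting the $A$-$B'$ edges from $H$ and joining $z$ to \emph{every} vertex of $A$. Making $z$ universally adjacent to $A$ (rather than only to $\{a_1,a_2\}$) is the key trick: it yields
\[
|E(K')| \;\geq\; t(t-1) - (t-4) + t \;=\; t^2 - t + 4,
\]
which is exactly the threshold $s^2-s+2+i$ of Corollary~\ref{zw2} with $s=t\geq 4$ and $i=2$.

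Set $F := \{za_1, za_2\}\cup E(L)$. The linear-forest hypothesis on $P\cup L$ forces $a_1$ and $a_2$ to appear in $L$ only as leaves and rules out $L=a_1 b a_2$ (which would form a $5$-cycle together with $P$); a short case analysis on the shape of $L$ then shows that $F$ is a linear forest in $K'$ with at most $4$ edges and at most $2$ components (the path $a_1 z a_2$ and $L$, which merge into a single path if $L$ meets $\{a_1,a_2\}$). Corollary~\ref{zw2} therefore yields a Hamiltonian cycle $C'$ in $K'$ containing all edges of $F$. Since $z$ has degree $2$ on $C'$ and $\{za_1,za_2\}\subseteq C'$, the cycle uses exactly these two edges at $z$; replacing them by the path $a_1 c_1 c_2 a_2$ produces a cycle of length $(2t-2)+3=2t+1$ in $H$ that contains both $P$ and $L$.

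The main obstacle is the edge count. Taking $z$ adjacent only to $\{a_1,a_2\}$ would give $K'$ just $t^2-2t+6$ edges, which falls short of the Corollary~\ref{zw2} threshold by $t-2$. Joining $z$ to all of $A$ supplies exactly the missing $t-2$ edges and does no harm, because the constraint $\{za_1,za_2\}\subseteq F\subseteq C'$ together with $z$'s degree $2$ on $C'$ forces $C'$ to avoid all the added ``phantom'' edges at $z$, keeping the translation back to a cycle of $H$ valid.
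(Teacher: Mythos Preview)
Your proof is correct and follows essentially the same approach as the paper: choose $B'\subseteq B$ of size $t-1$ containing $V(L)\cap B$, add a dummy vertex joined to all of $A$, check that $\{za_1,za_2\}\cup E(L)$ is a linear forest with at most four edges and at most two components, apply Corollary~\ref{zw2} with $s=t$ and $i=2$, and then swap the path $a_1za_2$ for $P$. Your additional justification of why $F$ is a linear forest and your remark about the necessity of joining $z$ to all of $A$ are more explicit than the paper's version, but the argument is the same.
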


{\bf Proof.} Choose some  $B'\subseteq  B$ with $|B'|=t-1$ such that $B\cap V(L)\subseteq  B'$.
 Let $Q$ be the bipartite graph whose $t$-element partite sets are $A$ and $B'\cup \{ c\}$
 where $c$ is a new vertex,
and the edge set  consists of $H[A\cup B']$ and all edges joining $c$ to $A$. By the conditions of the lemma,
the set $E':=E(L)\cup \{a_1c,ca_2\}$ forms a linear forest in $Q$.
Since $Q$ misses at most $t-4$ edges connecting $A$ with $B'\cup\{c\}$, by
 Corollary~\ref{zw2} with $s=t$ and $i=2$, $Q$ has
 a hamiltonian cycle $C'$ containing $E'$. Then the $(2t+1)$-cycle $C$ in $H$ obtained from $C'$
 by replacing path $a_1ca_2$ with $P$ satisfies~\eqref{eq22}.
  \qed

\begin{lem}\label{lem111}
Let $H\in \mathcal{F}_0 \cup \mathcal{F}_1\cup\dots\cup \mathcal{F}_4$ and $x,y\in V(H)$.
\begin{center}
\begin{tabular}{lp{5.5in}}
$(a)$ & $H$ contains an $x,y$-path of length at least $2t-2$;\\
$(b)$ &  if  $H$ does not contain an $x,y$-path of length at least $2t-1$, then \\
& $(b0)$ $H \in \mathcal{F}_0$ and $\{x,y\} \subseteq  A$, or \\
& $(b1)$  $H\in \mathcal{F}_1$ and $\{x,y\}\subseteq  A$, or\\
& $(b2)$  $H=F_4\in \mathcal{F}_4$ and $\{x,y\}\subseteq  A$; \\
$(c)$ & if  $H$ does not contain an $x,y$-path of length at least $2t$, then  \\
& $(c0)$ $H \in \mathcal{F}_0$, or \\
& $(c1)$  $H\in \mathcal{F}_1$ and at least one of $x,y$ is in  $A$, or\\
& $(c2)$  $H\in \mathcal{F}_2$ and either $\{x,y\}\subseteq  A$ or $\{x,y\}=\{a_1,b_1\}$, or\\
& $(c3)$  $H\in \mathcal{F}_3$ and  $\{x,y\}\subseteq  A$, or\\
& $(c4)$  $H\in \mathcal{F}_4$ and $\{x,y\}\subseteq  A$.
\end{tabular}
\end{center}
\end{lem}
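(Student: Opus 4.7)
The plan is to reduce each case to a single application of Corollary~\ref{zw2} (or of Lemma~\ref{lem111a} in the $\mathcal{F}_3$ case), after a small surgery that turns the desired $x,y$-path into a prescribed linear forest sitting inside a hamiltonian cycle of a nearly complete balanced bipartite graph $K_{s,s}$. The negative direction (why the listed exceptional pairs really fail) is dealt with once and for all by a bipartite parity observation: if a bipartite graph has parts $A,B$ with $|A|=t\leq|B|$, then any $x,y$-path with $\{x,y\}\subseteq A$ uses at most $|A|$ vertices of $A$ and at most $|A|-1$ vertices of $B$, hence has length $\leq 2t-2$. This is exactly the obstruction that shows up in the exceptional cases $(b0),(b1),(b2)$ and in the $\{x,y\}\subseteq A$ sub-cases of $(c0)$--$(c4)$.

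\textbf{Families $\mathcal{F}_0$ and $\mathcal{F}_1$.} Assume $H\subseteq K_{t,t'}$ with $t'\in\{t+1,t+2\}$ misses at most $t-3$ (resp.\ $t-4$) edges. To produce an $x,y$-path of the required length in the non-exceptional cases, I would pick a set $B^*\subseteq B$ of size $t$ (if $\{x,y\}\subseteq B$) or $t-1$ (if exactly one of $x,y$ is in $B$) containing $\{x,y\}\cap B$, and add one auxiliary vertex $z$ to restore a $K_{t,t}$-structure $Q$ with $z$ joined to $x,y$ (or to $x$ and an appropriate partner). The edge count of $Q$ drops from $t^2$ by at most the number of missing edges of $H$ incident to $B^*$, which by hypothesis is at most $t-4$, leaving $\geq t^2-t+4$ edges. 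Now the two edges $xz,zy$ (or their analogue) form a linear forest with two edges in at most two components, so Corollary~\ref{zw2} with $s=t,\ i=2$ gives a hamiltonian cycle of $Q$ containing these edges; deleting $z$ leaves the desired $x,y$-path through all of $A\cup B^*$. The parity bound from the opening paragraph then produces the exceptional entries $(b0),(b1)$ and the $\{x,y\}\subseteq A$ parts of $(c0),(c1)$. The case $H\in\mathcal{F}_1$ with $\{x,y\}\subseteq B$ needs slightly more care: here I aim for a hamiltonian $x,y$-path, and the same $K_{t,t}$-trick with $|B^*|=t$ still goes through because $t'-|B^*|\geq 2$ slack lets me avoid $x,y$ when deleting $B\setminus B^*$.

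\textbf{Family $\mathcal{F}_2$.} Here $H$ is obtained from some $H_1\in\mathcal{K}^{-t+4}(F_1)$ by subdividing an edge $a_1b_1$ via a new vertex $c_1$ of degree~$2$. I distinguish whether the sought path uses $c_1$. If the path avoids $c_1$, we are reduced to the $\mathcal{F}_1$ problem in $H_1$ where $a_1b_1$ is no longer an edge; the loss is absorbed by the missing-edge slack. If the path uses $c_1$, it must enter and leave via the two edges $a_1c_1,c_1b_1$, so treat $a_1c_1b_1$ as a single ``super-edge'' $a_1b_1$ and again reduce to the $\mathcal{F}_1$ problem in $H_1$, with one longer path. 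The exceptional entry $\{x,y\}=\{a_1,b_1\}$ in $(c2)$ arises precisely because any path of length $\geq 2t$ with these endpoints would have to either miss $c_1$ (but then it is an $a_1,b_1$-path in $H_1\subseteq K_{t,t+2}$ with endpoints in different parts — parity limits it to length $2t+1$, and routing through $c_1$ forces extra constraints) or use $c_1$ at an internal position, which is impossible since $d_H(c_1)=2$.

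\textbf{Families $\mathcal{F}_3$ and $\mathcal{F}_4$.} For $H\in\mathcal{F}_3$ (and for $F_4'$), the sub-structure is exactly the setting of Lemma~\ref{lem111a}: there is a book $c_1c_2$ attached to $A$ via $A_1,A_2$ with $|A_1\cup A_2|\geq 3$, and the lemma produces a $(2t+1)$-cycle through the prescribed path $a_1c_1c_2a_2$ together with up to two additional tree edges. I would use those two extra edges to force $x$ and $y$ to be consecutive on the cycle, whence deleting one cycle edge between them yields an $x,y$-path of length $2t$. The exceptional $\{x,y\}\subseteq A$ case of $(c3),(c4)$ is again the parity obstruction, since $|A|=t$. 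For the remaining member $F_4$ of $\mathcal{F}_4$ (only when $t=4$), the graph $K_{3,6}$ plus a perfect matching on $B$ must be handled separately: a direct check, exploiting the three matching edges on $B$ to augment alternating paths, produces an $x,y$-path of length $2t-1=7$ whenever $\{x,y\}\not\subseteq A$, yielding $(b2)$; for length $2t=8$ one needs to use both partners of at least one matching edge, which is always possible unless forced parity again puts both endpoints in $A$, yielding $(c4)$.

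\textbf{Main obstacle.} The routine bits are the parity bounds and the repeated invocation of Corollary~\ref{zw2} with $s=t,\ i=2$. The delicate step is verifying that in $\mathcal{F}_2$ the reduction to $\mathcal{F}_1$ still leaves enough edges to apply Corollary~\ref{zw2} and that the exceptional pair $\{a_1,b_1\}$ genuinely fails at length $2t$ but succeeds at length $2t-1$; this, together with the ad hoc handling of the two nine-vertex graphs in $\mathcal{F}_4$, is where most of the casework lives.
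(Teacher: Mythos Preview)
Your plan matches the paper's in all essentials: both rely on Corollary~\ref{zw2} (and Lemma~\ref{lem111a}) to thread $x,y$ onto a hamiltonian cycle of a nearly complete balanced bipartite auxiliary graph, and both dispatch $F_4$ by exhibiting explicit paths; the parity observation is exactly the intuition behind the listed exceptions.

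The one genuine divergence is your handling of $\mathcal{F}_2$. You reduce to $\mathcal{F}_1$ by casing on whether the path avoids or traverses $c_1$; the paper instead observes that every $H\in\mathcal{F}_2$ already lies in $\mathcal{F}(A,B',A_1,A_2)$ (relabel $b_1$ as $c_2$, take $A_1=\{a_1\}$, $|B'|=t+1$) and then treats $\mathcal{F}_2\cup\mathcal{F}_3\cup\{F_4'\}$ by a single uniform case split (endpoints in $B$--$B$, $B$--$C$, $C$--$C$, $A$--$A$, $A$--$B$, $A$--$C$) via Lemma~\ref{lem111a}. The exceptional pair $\{a_1,b_1\}$ of (c2) then drops out as the degenerate $A$--$C$ case with $|A_1|=1$. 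Your reduction is viable but has two loose ends you should patch: (i) when $x\in A$, $y\in B$ your auxiliary vertex $z$ adjacent to both breaks the bipartition, so the $K_{t,t}$ trick as stated does not apply --- you should instead prescribe the edge $xy$ directly together with the restored edge $a_1b_1$; (ii) you have not covered the cases where $c_1$ itself is one of $x,y$, which are needed for parts (b) and (c). Both issues are handled automatically in the paper's unified $\mathcal{F}(A,B,A_1,A_2)$ framing. A minor point: the bare bipartite parity bound applies only to $\mathcal{F}_0,\mathcal{F}_1$; for $\mathcal{F}_3$ and $\mathcal{F}_4$ the edge $c_1c_2$ (resp.\ the matching on $B$ in $F_4$) destroys bipartiteness, and a slightly refined segment count around that single non-bipartite edge is what caps $A$--$A$ paths at $2t-1$ --- though the lemma does not actually require you to prove the exceptions fail.
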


{\bf Proof.} The statements concerning $H\in \mathcal F_0 \cup \mathcal F_1$ are the easiest.
Using Corollary~\ref{zw2} (or just using induction on $t$) it is easy to prove a bit more.
Suppose that $H\in {\mathcal K}^{-(t-2)}_{t,t+1}(A,B)$, $t\geq 2$.
Then every pair $x,y\in A\cup B$ is joined by a path of maximum possible length.
This means that every pair of vertices $b_1, b_2\in B$ is joined by a path of length $2t$,
 every pair $a\in A$, $b\in B$ is joined by a path of length $2t-1$, and
 every pair $a_1, a_2\in A$ is joined by a path of length $2t-2$. For example, the proof for $H\in \mathcal F_0$, $a\in A$ and $b\in B$
 is as follows. Consider $H'$ obtained from $H$ by  adding edge $ab$ if $ab\notin E(H)$ and deleting any $b'\in B-b$.
 Then by Corollary~\ref{zw2}, $H'$ has a hamiltonian cycle containing $ab$, which yields an $a,b$-path in $H$ of length $2t-1$.

The cycle $(b_1b_2a_1b_3b_4a_2b_5b_6a_3b_1)$ and path $b_1b_2a_1b_3a_2b_4a_3b_5b_6$
in $F_4$ prove (b2) and the part of (c4) related to $F_4$.

Suppose now that  $H\in \mathcal{F}_2\cup \mathcal{F}_3\cup \{F'_4\}$; even in a more general setting
  suppose that $H\in {\mathcal F}(A,B,A_1, A_2)$ with $|B|=|A|=t$, $|A_1\cup A_2|\geq 3$,
  $|A_2|\geq |A_1|\geq 1$ (and in case of $|A_1|=1$ one has $A_1\cap A_2=\emptyset$).
We prove the statements in reverse order, first (c2) and (c3), then (b), finally (a). When we comment below "Case BC" or "Case AA", this  means
that we consider paths from $B$ to $C$ or from $A$ to $A$, respectively.

By Lemma \ref{lem111a}, we already knew that $c_1c_2$ is contained in a cycle of length $2t+1$ so these two vertices are joined by a path of length $2t$ (Case CC).
If $b\in B$, and $a_i\in A_i$, then the almost complete bipartite subgraph $H[A\cup B]$ contains a $b,a_i$-path of length $2t-1$, so $b$ and $c_{3-i}$ is joined in $H$ by a path of length $2t+1$ (Case BC).
Concerning $b_1, b_2\in B$ we can define $H^+$ by adding an extra vertex $a_{t+1}$ to $A$ and joining it to each vertex of $B$.
Applying Lemma~\ref{lem111a} to $H^+$ (with $t+1$ in place of $t$) we get that it has a cycle $C_{2t+3}$
 through $b_1a_{t+1}b_2$.
This cycle gives a $b_1,b_2$-path of length $2t+1$ in $H$ (Case BB).
In case of $x\in A$, $y\in A$ the high edge density of $H$ implies that $x$ and $y$ have a common neighbor
  $b\in B$.
One can find a path $P=a_1c_1c_2a_2$ such that $P$ and $xby$ form a linear forest.
Then  Lemma~\ref{lem111a}  yields a cycle $C_{2t+1}$ through all these edges. Leaving out $b$
 one gets an $x,y$-path of length $2t-1$ in $H$ (Case AA).
In case of $x\in A$, $y\in B$ maybe we have to add the edge $xy$ to obtain a cycle $C_{2t+1}$ through it by
 Lemma~\ref{lem111a} .
This yields an $x,y$-path of length $2t$ (Case AB).
Finally, if $x\in A$, $y=c_i$ one uses a path $c_i,c_{3-i},x'$ and an $x,x'$-path of length $2t-2$ in $A\cup B$
 to get an $x,y$-path of length $2t$, if this can be done.
If such an $x'\neq x$ does not exists, then $x=a_1\in A_1$, $|A_1|=1$, and $y=c_2$.
This is the case described in (c2) (Case AC).
 \qed

\bigskip

\subsection{Reversing contraction}\label{sec_preserv}

The aim of this section is to prove Lemma~\ref{zl} below on preserving certain subgraphs during the reverse of the Basic Procedure.

\begin{lem}[Main lemma on contraction]\label{zl} Let $k \geq 9$ and suppose $F$ and $F'$ are 2-connected graphs such that $F = F'/xy$ and $c(F') < k$.

If $k$ is even and $F$ contains a subgraph $H \in \mathcal{F}_1\cup\dots \cup \mathcal{F}_4$,
then $F'$ has a subgraph $H' \in \mathcal{F}_1\cup\dots\cup \mathcal{F}_4$.

If $k$ is odd and $F$ contains a subgraph $H \in \mathcal{F}_0$, then $F'$ has a subgraph $H' \in \mathcal{F}_0$.
\end{lem}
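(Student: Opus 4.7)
The strategy is to invert the contraction: construct $H' \subseteq F'$ whose image under contracting $xy$ recovers $H$ (up to family membership), or failing that, derive a long cycle in $F'$. If $x*y \notin V(H)$, then $V(H) \subseteq V(F') \setminus \{x,y\}$, so $H \subseteq F'$ and we set $H' = H$. Henceforth assume $x*y \in V(H)$ and write $N_x := N_{F'}(x) \cap N_H(x*y)$, $N_y := N_{F'}(y) \cap N_H(x*y)$; the definition of contraction gives $N_x \cup N_y = N_H(x*y)$, although $N_x$ and $N_y$ may overlap.

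The primary attempt is \emph{direct substitution}: define $H_x$ on vertex set $(V(H) \setminus \{x*y\}) \cup \{x\}$ by retaining every non-$x*y$-edge of $H$ together with every edge $xv$ for $v \in N_x$; define $H_y$ symmetrically. Compared to $H$, the graph $H_x$ loses precisely the edges $\{x*y\,v : v \in N_H(x*y) \setminus N_x\}$, and similarly for $H_y$. For each family one compares this loss against the deletion budget ($t-3$ for $\mathcal F_0$, $t-4$ for $\mathcal F_1,\mathcal F_2,\mathcal F_3$, and the explicit structure for $\mathcal F_4$), and checks whether $H_x$ or $H_y$ lies in the family. A secondary attempt is \emph{family-switching}, which uses $x$ and $y$ simultaneously together with the given edge $xy$, possibly landing $H'$ in a different family $\mathcal F_j$. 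For instance, if $H \in \mathcal F_1$ with $x*y$ in the part of size $t+2$, one can put $x$ there in place of $x*y$ and let $y$ play the role of a new subdivision vertex adjacent to $x$ and to some $a \in N_y$, producing $H' \in \mathcal F_2$; analogous constructions realize $\mathcal F_2 \to \mathcal F_3$ (taking $x,y$ as the distinguished vertices $c_1,c_2$) and, for $k=10$, lifts into $\mathcal F_4$.

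If no lift in any family works, I would derive a cycle of length at least $k$ in $F'$, contradicting $c(F') < k$. Failure of both $H_x$ and $H_y$ (and of the family switches) forces both $N_x \setminus N_y$ and $N_y \setminus N_x$ to be nonempty, so one can choose $x^* \in N_x \setminus N_y$ and $y^* \in N_y \setminus N_x$. Lemma~\ref{lem111} then furnishes an $x^*,y^*$-path $P$ in $H$ of length at least $2t-2$, with stronger guarantees outside the exceptional cases (b)--(c). The path is lifted to $F'$: segments not using $x*y$ are preserved; whenever $P$ enters $x*y$ from the $x^*$-side and leaves toward the $y^*$-side, the passage is replaced by the path through $x,y$ using the edge $xy$, increasing the length by $1$. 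Appending the edges $xx^*$, $xy$, $yy^*$ closes a cycle $C$ in $F'$ with $|C| \geq |P| + 3 \geq 2t+1$. For odd $k = 2t+1$ this already yields $|C| \geq k$. For even $k = 2t+2$ one notes that the exceptional cases of Lemma~\ref{lem111}(c) requiring $\{x^*,y^*\} \subseteq A$ only arise when $x*y$ belongs to the large bipartition part, and in that configuration the family-switching step (to $\mathcal F_2$ or $\mathcal F_3$) has already succeeded, so in the residual case $|P| \geq 2t$ and $|C| \geq k$.

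The main obstacle is the intricate case analysis: one must handle every combination of family $\mathcal F_i$ containing $H$, every possible location of $x*y$ within $H$ (dense part $A$, sparse part $B$, subdivision vertex, or low-degree $c_1,c_2$ in $\mathcal F_3, \mathcal F_4$), and in each case verify that direct substitution, family-switching, or the long-cycle argument resolves it. The degenerate cases $x*y \in \{c_1,c_2\}$ (where $|N_H(x*y)| = 2$) are particularly delicate since direct substitution almost always fails there, so only family-switching applies, and the choice of $a_1$ for the switch needs $a_1 \in N_x \cap N_y$ (else one must argue instead from the $\mathcal F_1$-structure of the ambient graph). The $F_4$ subcase (appearing only for $k = 10$) requires its own ad hoc check since its $9$-vertex structure is not parametric in $t$.
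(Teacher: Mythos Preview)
Your high-level plan (direct substitution, then family-switching, then a long-cycle contradiction) matches the paper's, but the long-cycle step has a real gap. You propose to take an $x^*,y^*$-path $P$ in $H$ supplied by Lemma~\ref{lem111} and then close it in $F'$ with the edges $xx^*,\,xy,\,yy^*$. This only works when $P$ avoids $z:=x*y$. If $P$ passes through $z$ (and for the lengths you need, it typically must: e.g.\ for $H\in\mathcal F_1$ with $z\in A$ and $x^*,y^*\in B$, any $x^*,y^*$-path of length $2t$ in a subgraph of $K_{t,t+2}$ uses all of $A$, hence uses $z$), your lift already places $x$ and $y$ as \emph{internal} vertices of the lifted path $P'$, and then ``appending $xx^*,xy,yy^*$'' revisits $x$ and $y$ and does not produce a simple cycle. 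No rearrangement of chords on $P'$ recovers a single cycle of length $\ge |P|+3$; one only gets two shorter cycles sharing no edges.

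The paper fixes exactly this by working with cycles through the \emph{path} $x^*zy^*$ rather than with $x^*,y^*$-paths: it isolates Property $(W_\ell)$ (for every $z$ there is a neighbour $w$ such that for every other neighbour $w'$ some $C_\ell$ contains $wzw'$), proves it for $\mathcal F_2\cup\mathcal F_3\cup\{F_4'\}$ and for $\mathcal F_0$ via Lemma~\ref{lem111a}/Corollary~\ref{zw2}, and then observes that if $w\in N_x$ and any $w'\in N_y\setminus\{w\}$ existed, the path $C_\ell-z$ together with $w'y,yx,xw$ would give a $C_{\ell+1}$ in $F'$. This forces $N_y\subseteq\{w\}$, so $N_x=N_H(z)$ and direct substitution succeeds. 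For $H\in\mathcal F_1$ (where $(W_\ell)$ does not hold as stated) the paper does \emph{not} appeal to Lemma~\ref{lem111} at all: when both $|N_x\setminus N_y|\ge2$ and $|N_y\setminus N_x|\ge2$, it builds an auxiliary balanced bipartite graph on $(A\setminus\{u\})\cup\{x,y\}$ versus a $(t{+}1)$-subset of $B$ and uses Corollary~\ref{zw2} to find a Hamiltonian cycle through the linear forest $x_1xx_2\cup y_1yy_2$, yielding a $C_{2t+2}$ in $F'$; otherwise it lands in $\mathcal F_2$ or $\mathcal F_3$ by the switches you sketched. Your write-up should replace the path-closing argument by one of these two mechanisms.
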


{\bf Proof for $k$ even.} {\bf Case 1.} $H\in {\mathcal F}_1$.
Let $u=x* y$. If $u\notin V(H)$ then $H\subseteq  F'$ and we are done. In case of $u\in A$ consider the sets $X:=N_{F'}(x)\cap B$ and $Y:=N_{F'}(y)\cap B$.
If $X=X\cup Y$ then $F'$ restricted to $(A\setminus \{u\}) \cup \{ x\} \cup B$ contains a copy of $H$.
If $X=X\cup Y\setminus \{ y'\}$ for $y' \in V(H')$, then $F'$ restricted to $(A\setminus \{
u\})\cup \{ x\} \cup B\cup \{ y\}$ contains a copy of a graph from $\mathcal F_2$ (with $a_1:=x$, $b_1:=y'$, and $c_1:=y$).
We proceed in the same way if $Y=X\cup Y$ or if $|Y|= |X\cup Y|-1$.
In the remaining case $|X\setminus Y|\geq 2$ and $|Y\setminus X|\geq 2$, so
 one can choose five distinct elements $b_0, x_1, x_2, y_1, y_2$ from $B$ such that
$\{ x_1, x_2\}\subseteq  X\setminus Y$ and $\{ y_1,y_2\}\subseteq  Y\setminus X$.
Then the bipartite subgraph $ Q_0$ of $F'$ generated by the sets $A\setminus \{ u\} \cup \{ x,y\}$ and $B\setminus \{ b_0\}$ contains the
linear forest $L$ consisting of the paths $x_1xx_2$ and $y_1yy_2$.
If we define the graph $ Q$ by adding to $ Q_0$   all edges joining $x$ and $y$ to $B\setminus \{ b_0\}$, then $ Q$ has at least $(t+1)^2-(t-4)$ edges.
So by Corollary~\ref{zw2} for $s=t+1$ and $i=2$, $ Q$ has a hamiltonian cycle $C_{2t+2}$ containing all edges of $L$, and this cycle also appears in $F'$,  contradicting
$c(F')<k$.

In case of $u\in B$ consider the sets $X:=N_{F'}(x)\cap A$ and $Y:=N_{F'}(y)\cap A$.
If $|X\setminus Y|\leq 1$ or $|Y\setminus X|\leq 1$, then we proceed as above and find a subgraph $H'$ of $F$
 either isomorphic to $H$ or belonging to $\mathcal F_2$.
If $|X\setminus Y|\geq 2$ and $|Y\setminus X|\geq 2$, then we have
  four distinct elements $x_1, x_2, y_1, y_2$ in $A$
  such that
$\{ x_1, x_2\}\subseteq  X\setminus Y$ and $\{ y_1,y_2\}\subseteq  Y\setminus X$.
Then $F'$  contains a member of $\mathcal F_3$
 with $(c_1,c_2)=(x,y)$, $A_1:=\{ x_1,x_2\}$, and $A_2:=\{ y_1,y_2\}$.

  \bigskip

 {\bf Case 2.} $H \in \mathcal{F}_2 \cup \mathcal{F}_3 \cup \{F_4'\}$. The proof in this case follows from two claims.
We say that the graph $H$ has the Property ($W_{\ell}$) if the following holds.
\begin{center}
\begin{tabular}{lp{4.5in}}
$(W_{\ell})$ & {\em For all $z \in V(H)$ there exists $w\in N(z)$ such that for all $w'\in N(z) \backslash \{w\}$, the graph $H$ has a cycle $C_{\ell}$ containing the path $wzw'$.}
\end{tabular}
\end{center}

{\bf Claim 1.} {\em Suppose that the graph $F$ contains a subgraph $H$ satisfying Property {\rm ($W_{\ell}$)}, and $c(F')\leq \ell$. Then $F'$ has a subgraph  $H'$ isomorphic to $H$.}

\medskip
Let $z=x*y$ and $V=V(F)-z=V(F')-x-y$.
If $V(H)\subseteq V$, then there is nothing to prove.

Suppose that $z\in V(H)\subseteq V(F)$ and define
 $X:=N_{F'}(x)\cap N_H(z)$ and  $Y:=N_{F'}(y)\cap N_H(z)$.
Then $X\cup Y= N_H(z)$. 
Let $w\in N(z)$ be the vertex from the definition of the Property ($W_{\ell}$). Since $ N_H(z)=X\cup Y$,
we may assume by symmetry that  $w\in X$.

We claim that $Y-w=\emptyset$. Indeed, suppose there is $w'\in Y-w$. By Property {\rm ($W_{\ell}$)},
$H$ has a cycle $C_{\ell}$ containing the path $wzw'$. Then the path $C_{\ell} - z$
in $F'$ together with the edges $w'y$, $yx$ and $xw$ forms a cycle of length $\ell+1$, contradicting $c(F')\leq \ell$.

This implies  that $N_{F'}(x)$ contains $N_H(z)$. So $F'$ contains
a copy of $H$  with the vertex set $(V(H)\setminus \{ z\}) \cup \{ x\}$.  \qed

{\bf Claim 2.} {\em If $H\in \mathcal{F}_2\cup \mathcal{F}_3$ or $H=F'_4$, then $H$ satisfies Property {\rm ($W_{2t+1}$)}.}

\medskip
 We prove a bit more:  every $H\in {\mathcal F}(A,B,A_1, A_2)$ with $|B|\geq t-1$, $|A|=t$
 satisfies ($W_{2t+1}$).
Indeed, for $z=c_i$ we can choose a $w:=c_{3-i}$.
For $z\in B$ we can choose a $w\in A$ arbitrarily.
For $z\in A$ we can choose
 $w\in N(z)\subseteq B$ arbitrarily, except
 if $ z\in A_i$ and $|A_i|=1$.
 In this latter case we can use $w:=c_i$.
In each of these cases, given $L:=wzw'$ one can find a path $P:=a_1c_1c_2a_2$ such that $P\cup L$ is a linear forest.
Then Lemma~\ref{lem111a} yields that $H$ has a cycle $C_{2t+1}$ through $wzw'$.

Since each $H\in \mathcal{F}_2\cup \mathcal{F}_3\cup \{ F'_4\}$ belongs to such ${\mathcal F}(A,B,A_1, A_2)$, this
 completes the proof of Claim~2.   \qed

 \medskip

 {\bf Case 3.} $H = F_4$. Let $u=x*y$. By symmetry, we can consider only two cases: $u=a_1$ and $u=b_1$.
 First, suppose $u=a_1$ and $xb_1\in E(F')$. Then since $c(F')\leq 9$, $y$ is not adjacent to any of $b_3,b_4,b_5,b_6$.
 Thus $x$ is adjacent to all of them, and if $yb_2\in E(F')$, then the cycle $(yb_2b_1a_2b_3b_4a_3b_5b_6xy)$ contradicts
 $c(F')\leq 9$. So $xb_2\in E(F')$ and  the subgraph of $F'$ with vertex set $V(H) \setminus\{u\} \cup \{x\}$ contains $F_4$.

 Similarly, suppose $u=b_1$ and $xb_2\in E(F')$. Then to avoid a $10$-cycle in $F'$, $y$ has no neighbors in $A$
 and thus $x$ is adjacent to all of $A$.
 So, again  the subgraph of $F'$ with vertex set $V(H)\setminus\{u\}\cup\{x\}$ contains $F_4$.

\bigskip

{\bf Proof for $k$ odd.}
First we prove the following statement~\eqref{eq222} which is true for every $t\geq 2$.
Let $H\in {\mathcal K}^{-t+2}(K(A,B))$ with $|A|=t$, $|B|=t+1$.
Let $P$ be a path of length two in $H$. Then
\begin{equation}\label{eq222}
   \mbox{\em $H$ has a cycle  $C$ of length $2t$ containing $P$}.
  \end{equation}
If every vertex of $B\setminus P$ is joined to all vertices of $A$, then one can find a $C_{2t}$ through $P$ directly.
Otherwise, there is a vertex $v\in B\setminus P$ of degree at most $t-1$, so $H\setminus \{ v\}$ is a subgraph of $K_{t,t}$ with at least $t^2-t+3$ edges. Then the statement follows from Corollary~\ref{zw2} for $s=t$ and $i=1$.

Now suppose that $H\in {\mathcal F}_0$, $H \subseteq  F$, $F=F'/xy$, and $H,F$, $F'$ satisfy the constraints of Lemma~\ref{zl}.
Then \eqref{eq222} implies that $H$ satisfies property $(W_{2t})$.
Thus by Claim~1, $F'$ has a subgraph $H'$ isomorphic to $H$.
\qed

\subsection{Completing the proof of Theorem \ref{main}}

{\bf Proof for $k$ even.} Proposition~\ref{co28} and Lemma~\ref{zl} imply that there is a subgraph $H$ of $G=G_n$ such that $H\in \mathcal{F}_1\cup \dots \cup\mathcal{F}_4 $.
Let $G'=G-V(H)$ and $S_1,\ldots,S_s$ be the components of $G'$.
Each of $S_i$ has at least two neighbors, say $x_i$ and $y_i$ in $V(H)$. Let $\ell_i$ denote the length of a longest
$x_i,y_i$-path in $G[V(S_i)\cup \{x_i,y_i\}]$. Since $c(G) < k$, by Lemma~\ref{lem111}(a)~and~(b),
\begin{equation}\label{m28}
\mbox{\em for all $i$,}\quad \ell_i\leq 3 \quad\mbox{\em and if $H\in \mathcal{F}_2\cup \mathcal{F}_3\cup \{ F'_4\}$, then}\quad \ell_i\leq 2.
\end{equation}

\medskip
{\bf Case 1:} $H\in \mathcal{F}_3\cup \{ F'_4\}$. By~\eqref{m28}, $\ell_i\leq 2$ for all $i$ and all choices of $x_i$ and $y_i$.
Since $G$ is $2$-connected, this yields that each $S_i$ is a singleton, say $v_i$. Moreover, Lemma~\ref{lem111}(c3) and (c4)
imply $N(v_i)\subseteq  A$ for all $i$. So $G$ is contained  in a graph in ${\mathcal G}_1(n,k)$,
and the only edge outside $A$ is  $c_1c_2$.

{\bf Case 2:} $H\in \mathcal{F}_2$. Again, by~\eqref{m28}, $\ell_i\leq 2$ for all $i$ and all choices of $x_i$ and $y_i$.
So again this yields that each $S_i$ is a singleton, say $v_i$. But now Lemma~\ref{lem111}(c2)
implies that  for all $i$,  either $N(v_i)\subseteq  A$ or $N(v_i)=\{a_1,b_1\}$.
Thus $G$ is contained  in a graph in ${\mathcal G}_2(n,k)$, where the
only possible star component of $G - A$ with at least three vertices is a star with center $b_1$ and $c_1$ a leaf.

{\bf Case 3:} $H\in \mathcal{F}_1$. Suppose first that some $x_i$ is in $B$. Then by Lemma~\ref{lem111}(c3), $y_i\in A$ and
by Lemma~\ref{lem111}(b), $\ell_i=2$. So, denoting the common neighbor of $x_i$ and $y_i$ in $S_i$ by $c_1$, we get Case~2.
Thus it is enough to consider below only the situation when
\begin{equation}\label{m281}
\mbox{\em
$N(S_i)\cap V(H)\subseteq  A$ for every $i$.}
\end{equation}
We consider three cases.

\medskip

{\em Case 3.1:} For some $i\neq j$, $\ell_i\geq 3$ and $\ell_j\geq 3$, say $\ell_1\geq 3$ and
$\ell_2\geq 3$. Then by~\eqref{m28}, $\ell_1=\ell_2=3$. For $i=1,2$, let  $(x_i,v_i,v'_i,y_i)$ denote an $x_i,y_i$-path of length three in $G[V(S_i)\cup\{x_i,y_i\}]$.
Also, by~\eqref{m281}, $x_1,y_1,x_2,y_2\in A$.
Suppose first that $\{ x_1,y_1\}\neq \{ x_2,y_2\}$.
We proceed as in the beginning of the proof of Lemma~\ref{zl}.
Choose a $(t-2)$-element subset $B'\subseteq B$ and add two new vertices $b_1'$ and $b_2'$ and join them to all vertices of $A$.
Then the obtained bipartite graph $H'$ has at least $t^2-t+4$ edges so there is a hamiltonian cycle $C'$ containing the linear forest
 $x_1b_1'y_1 \cup x_2b_2'y_2$ by Corollary~\ref{zw2}. This $C'$ corresponds to a cycle of length $k$ in $G$, a contradiction.

 It follows that every component $S_i$ with $\ell_i\geq 3$
has exactly two neighbors in $V(H)$ and these two neighbors, say $x_1,y_1$, are the same for all such components; furthermore  $x_1,y_1\in A$.
Furthermore, in order to have $\ell_i \leq 3$, all leaves of $S_i$ have the same neighbor in $A$.
Thus $G$ is contained in a graph in ${\mathcal G}_3(n,k)$.

{\em Case 3.2:} There exists exactly one  $i$ with $\ell_i\geq 3$,  say $\ell_1\geq 3$.
 Then by~\eqref{m28}, $\ell_1=3$. Let $(x_1,v_1,v'_1,y_1)$ be an $x_1,y_1$-path of length $3$ in $G[V(S_i)\cup\{x_1,y_1\}]$.
 By~\eqref{m281}, every other component $S_i$ is a singleton, say $v_i$ with $N(v_i)\subseteq  A$.
 As in Case 3.2, in order to have $\ell_1\leq 3$, $S_1$ should be a star, and if $S_1\neq K_2,K_1$, then all leaves of $S_1$ are adjacent to
the same vertex in $A$. Thus $G$ is contained in a graph in ${\mathcal G}_1(n,k)\cup {\mathcal G}_2(n,k)$.

{\em Case 3.3:} $\ell_i\leq 2$ for all $i$. Here $G$ is contained in a graph in $\mathcal{G}_1(n,k)$. Then each $S_i$ is a singleton with all neighbors in $A$. It follows that $G-A$ is an
independent set.

{\bf Case 4:} $H=F_4$. By Lemma~\ref{lem111}(c4),~\eqref{m281} holds. Together with~\eqref{m28}, this yields that
every component $S$ of $G-A$ is a star and if $|S|\geq 3$, then all leaves of $S$ have the same neighbor in $A$.
It follows that $G\in \mathcal{G}_4(n,k)$.

\bigskip

{\bf Proof for $k$ odd.}
By Proposition~\ref{co28} and Lemma~\ref{zl}, $G_n$ contains some $H\in \mathcal{F}_0$.
 Let $G'=G_n-H$ and $S_1,\ldots,S_s$ be the components of $G'$.
Each of $S_i$ has at least two neighbors, say $x_i$ and $y_i$ in $V(H)$. Let $\ell_i$ denote the length of a longest
$x_i,y_i$-path in $G_n[V(S_i)\cup\{x_i,y_i\}]$. Since $c(G_n)\leq 2t$, by Lemma~\ref{lem111},
\begin{equation}\label{m28'}
\mbox{\em for all $i$,}\quad \ell_i\leq 2\quad\mbox{and $\{x_i,y_i\}\subseteq A$.}
\end{equation}

Then each $S_i$ is a singleton with all neighbors in $A$. It follows that $G-A$ is an
independent set. This completes the proof of Theorem~\ref{main} for $k$ odd. \qed

\section{Proof of Theorem~\ref{t3} for $k \leq 8$}\label{secshort}

Recall that Theorem~\ref{main} describes for $k\geq 9$ and $n\geq 3k/2$ the  $n$-vertex 2-connected graphs  with no cycle of length at least $k$
and more than $h(n,k,t-1)$ edges. In this section, we will do the same for $4\leq k\leq 8$ and $n\geq k$.
We will use for this the classes ${\mathcal G}_i(n,k')$ defined in  Section~\ref{maintheorem} and  the notion of a $J_3$-{\it bridge}.
For $A\subseteq V(G)$ and $S\subseteq V(G)\setminus A$,
 $S$ forms a {\em $J_3$-bridge of $A$ with endpoints $a_1,a_2$} if $a_1, a_2\in A$,
 $A':=\{ a_1, a_2\}$ is a cutset of $G$, $G[S\cup A']\cup \{ a_1a_2\}$ is a $2$-connected graph,  $G[S]$ is connected, and
 the length of the longest $a_1,a_2$-path in $G[S\cup A']$ is three.

Furthermore, since the description (but not the proof) for $k=8$ is more sophisticated,
we will  need four more special graph classes for $k=8$:
Each of the graph classes ${\mathcal G}_i(n,8)$ ($5\leq i\leq 8$) contains $2$-connected $n$-vertex graphs $G$ with $c(G)<8$
and having a special vertex set $A=\{ a_1,a_2, \dots, a_s\}$ with $G[A]$ being a complete graph and such that
 $G\setminus A$ consists of $J_3$-bridges and isolated vertices having exactly two neighbors in $A$.

If $G\in {\mathcal G}_5(n,8)$, then $s=3$ and  $a_1$  is  adjacent to each component in $G\setminus A$.
So the edge $a_2a_3$ is contained in a unique triangle, namely $a_1a_2a_3$.

If $G\in{\mathcal G}_6(n,8)\cup {\mathcal G}_7(n,8)$,  then $s=4$ and the endpoints of all $J_3$-bridges are $\{ a_1, a_2\}$
 while one of the neighbors of some isolated vertex $c$ of $G\setminus A$ is $a_1$ in case of ${\mathcal G}_6(n,8)$ and $N(c)=\{ a_3,a_4\}$ for
 all $c$
  in case of ${\mathcal G}_7(n,8)$.

If $G\in {\mathcal G}_8(n,8)$, then $s=5$ and $N(S)=\{a_1,a_2\}$ for each component $S$ of $G-A$.

\begin{thm}\label{t3small}
Let  $4\leq k \leq 8$ and $n\geq k$. Let $G$ be an  $n$-vertex 2-connected graph  with no cycle of length at least $k$. Then either $7\leq k\leq 8$
and  $e(G)\leq h(n,k,t-1)$ edges or  $G$ is a subgraph of a graph in $\mathcal{G}(n,k)$, where
\begin{center}
\begin{tabular}{lp{5.6in}}
$(1)$ & ${\mathcal G}(n,4)=\emptyset$,\\
$(2)$ & ${\mathcal G}(n,5):={\mathcal G}_1(n,5)$,\\
$(3)$ & ${\mathcal G}(n,6):={\mathcal G}_1(n,6) \cup {\mathcal G}_2(n,6)$,\\
$(4)$ & ${\mathcal G}(n,7):=\{ H_{n,7,3} \}\cup {\mathcal G}_1(n,6)\cup {\mathcal G}_2(n,6)\cup {\mathcal G}_3(n,6)$, \\
$(5)$ & ${\mathcal G}(n,8):= \bigcup_{1\leq i\leq 8, i\neq 4}{\mathcal G}_i(n,8)$.
\end{tabular}
\end{center}
\end{thm}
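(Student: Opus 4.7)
The plan is to handle each value of $k \in \{4,5,6,7,8\}$ separately, splitting into three regimes of increasing difficulty. The case $k=4$ is immediate from Theorem~\ref{th:dirac}: any $2$-connected graph on $n \geq 4$ vertices has $c(G) \geq \min\{n,2\delta(G)\} \geq 4$, so no such $G$ with $c(G)<4$ exists, confirming $\mathcal{G}(n,4)=\emptyset$. For $k \in \{5,6\}$ we have $t=2$, and since the conclusion gives no edge-count escape clause, the theorem requires a complete description of all $2$-connected graphs with $c(G)<k$. I would do this by taking a longest cycle $C$ of length $\ell \in [4,k-1]$ and analyzing the attachments of the remaining vertices: by $2$-connectivity, each component of $G - V(C)$ reaches $V(C)$ in at least two places, and by the maximality of $C$, any such attachment cannot produce a cycle of length $\geq k$. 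Combined with Lemma~\ref{le:kop} applied to the $x,y$-arc of $C$ and this external path, this tightly constrains the arc distance between the attachment points. For $k=5$ this forces the same attachment pair for every external vertex, giving $G \subseteq H_{n,5,2}$; for $k=6$ one also allows the pendant-through-$b_1$ structure of $\mathcal{G}_2(n,6)$.

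For $k \in \{7,8\}$ we have $t=3$ and I would adapt the contraction-based proof of Theorem~\ref{main}. The Basic Procedure of Section~\ref{maintheorem} applies, except that the inequality $(t-1)^2 \geq \binom{t+1}{2}-1$ that validates Rule~(R3) for $k \geq 9$ degenerates to $4 \geq 5$ at $t=3$; I would either tighten (R3) so that it triggers only when the edge-count savings outweigh the cost, or invoke an \emph{ad hoc} reduction in the low-density cases. After termination with a graph $G_m$, I would apply Theorems~\ref{th:er}, \ref{th:nonham}, and~\ref{t24} (with a careful check of their hypotheses at small $n$) to show that $G_m$ either sits inside $H_{m,k,t}$ or is one of a short list of exceptional graphs. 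Reversing the contractions, in the spirit of Lemma~\ref{zl}, then propagates the structure back to $G$.

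For $k=7$ the extremal families are $\{H_{n,7,3}\} \cup \mathcal{G}_1(n,6) \cup \mathcal{G}_2(n,6) \cup \mathcal{G}_3(n,6)$ and the analysis is reasonably close to that of Theorem~\ref{main}; the appearance of $\mathcal{G}_3(n,6)$ reflects the same small-case phenomenon that makes $k=7$ an exception in Theorem~\ref{t3}(b). For $k=8$ the new families $\mathcal{G}_5(n,8),\dots,\mathcal{G}_8(n,8)$ enter and require the notion of a $J_3$-bridge: once $G_m$ reveals a core $A$ with $G-A$ split into singletons (with two neighbors in $A$) and larger components, the constraint $c(G)<8$ forces every such component $S$ together with its two attachment vertices in $A$ to admit no $a_1,a_2$-path of length $>3$, which is precisely the definition of a $J_3$-bridge. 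Distinguishing the cases $|A| \in \{3,4,5\}$, and tracking whether the singleton attachment pair coincides with or differs from the bridge attachment pair, yields the four separate classes $\mathcal{G}_5,\mathcal{G}_6,\mathcal{G}_7,\mathcal{G}_8$.

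The main obstacle will be $k=8$. Multiple exceptional configurations can coexist, and the reverse-contraction step must simultaneously track all candidate subgraphs (the small-$t$ analogs of $\mathcal{F}_0,\dots,\mathcal{F}_4$). A version of Lemma~\ref{lem111} restricted to $t=3$ must be established, enumerating which pairs $(x,y)$ in each candidate $H$ admit long paths; Corollary~\ref{zw2} and its analogs must be applied at the smallest admissible values of $s$, so the linear-forest extensions need to be verified essentially by hand in each case. Once this catalog is in place, the argument of the completion step for Theorem~\ref{main} transplants, with the required additional cases distinguishing a singleton external vertex from a $J_3$-bridge, completing the proof.
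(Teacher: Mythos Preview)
Your plan for $k\in\{4,5,6\}$ matches the paper's approach: take a longest cycle $C$, analyze the bridges of $V(C)$, and use maximality of $C$ to pin down attachments. Where you diverge is at $k\in\{7,8\}$. The paper does \emph{not} switch to the contraction machinery of Section~\ref{maintheorem} here; it continues the very same longest-cycle-and-bridge analysis, simply splitting on $|V(C)|\in\{6,7\}$ and on whether a bridge is a singleton with two or three neighbors on $C$ or a $J_3$-bridge. A short sequence of elementary claims (no consecutive attachments, every bridge induces a star, at most two attachment points for a non-singleton bridge, no ``crossing'' pair of bridges) reduces each subcase to one of the listed families or to the edge bound $h(n,k,t-1)$ via Theorem~\ref{th:Kopylov2} applied to $G[V(C)\cup\{z\}]$.

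Your alternative route---adapting the Basic Procedure and Lemmas~\ref{lem11}, \ref{nlem}, \ref{lem111}, \ref{zl} to $t=3$---is not obviously salvageable from what you wrote. You correctly flag that the (R3) accounting inequality $(t-1)^2\ge\binom{t+1}{2}-1$ fails at $t=3$, but several other ingredients also break: Lemma~\ref{lem11} and Lemma~\ref{nlem} are stated and proved only for $t\ge 4$; Corollary~\ref{zw2} needs $s\ge 4$, so every invocation with $s=t$ collapses; and the families $\mathcal{F}_0,\dots,\mathcal{F}_4$ and the path-length catalogue of Lemma~\ref{lem111} would all have to be redeveloped for $t=3$ from scratch. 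None of this is addressed beyond ``verify by hand,'' and the proliferation of exceptional classes $\mathcal{G}_5,\dots,\mathcal{G}_8$ at $k=8$ is precisely the signal that the clean $\mathcal{F}_i$-subgraph mechanism does not survive. The paper's direct approach avoids all of this: it is shorter, uniform across $4\le k\le 8$, and requires nothing beyond the maximality of $C$ and Kopylov's bound on small pieces.
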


The proof scheme is that we consider a graph $G$ satisfying the conditions of the theorem and
 take a longest cycle $C$
 with  vertex set, say $X:= \{ x_0, x_1, x_2, \dots,x_r\}$. Moreover, we will assume that
 $C$ has the maximum sum of the degrees of its vertices among the longest cycles in $G$.
 Analyzing possibilities, we will derive that  $G\in\mathcal{G}(n,k)$.

 A  {\em bridge} of $C$ is the vertex set of a component of $G-X$.

We start from a sequence of simple claims on the structure of bridges and the edges between $X$ and the bridges.
For brevity we denote by $d_C(i,j)$ the distance on $C$ between $x_j$ and $x_i$, i.e. $\min\{|j-i|,r+1-|j-i|\}$.
For a bridge $S$ and neighbors $x,x'$ of $S$ on $C$, an $(x,x',S)$-{\em path} is an $x,x'$-path whose all internal vertices
are in $S$.

The maximality of $|C|$ implies our first claim:

\begin{claim}\label{clai1} For every bridge $S$ and any $x_i,x_j\in N(S)\cap X$, the length of any $(x_i,x_j,S)$-path is
at most $d_C(i,j)$. In particular, if $S$ contains distinct $c_1,c_2$ such that $x_ic_1,x_jc_2\in E(G)$, then $d_C(i,j)\geq 3$.
\end{claim}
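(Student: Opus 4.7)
The plan is to use the maximality of $|C|$ by a direct rerouting argument. First I would note that $x_i$ and $x_j$ split $C$ into two arcs, one of length $d_C(i,j)$ and the other of length $(r+1) - d_C(i,j)$. Suppose $P$ is an $(x_i,x_j,S)$-path of length $\ell$. Since all internal vertices of $P$ lie in $S$, and $S$ is a component of $G-X$ and is therefore disjoint from $V(C)$, replacing the shorter of the two arcs of $C$ with $P$ produces a genuine cycle $C'$ in $G$ of length $(r+1) - d_C(i,j) + \ell$. The maximality of $|C|$ forces $|C'| \leq |C|$, which rearranges to $\ell \leq d_C(i,j)$.

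For the second statement I would use connectivity of $S$: since $c_1,c_2 \in S$ are distinct and $G[S]$ is connected by definition of a bridge, there is a $c_1,c_2$-path $Q$ inside $G[S]$ of length at least $1$. Prepending the edge $x_i c_1$ and appending the edge $c_2 x_j$ (both of which exist by hypothesis and are distinct from $Q$ because $x_i, x_j \notin S$) yields an $(x_i,x_j,S)$-path of length at least $3$. Applying the first part of the claim then gives $d_C(i,j) \geq 3$.

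I do not expect any real obstacle here; the only subtle point is confirming that the substituted object $C'$ is indeed a simple cycle, which follows immediately from the disjointness of $S$ from $V(C)$ and the fact that $P$ has no repeated vertices. This is precisely the standard rerouting trick that underlies most longest-cycle arguments, and it matches the parenthetical remark in the claim that its proof is just ``the maximality of $|C|$.''
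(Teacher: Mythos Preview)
Your proposal is correct and is exactly the intended argument: the paper does not write out a proof at all, recording only that ``the maximality of $|C|$ implies our first claim,'' and your rerouting step (replace the short arc of $C$ by the $(x_i,x_j,S)$-path, compare lengths) together with the connectivity-of-$S$ step for the ``in particular'' part is precisely what is meant.
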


If $|S|\geq 2$,  then by the $2$-connectedness of $G$, there are two vertex-disjoint $S,X$-paths. Thus if $G[S]$ contains
a cycle, then for some $x_i,x_j\in N(S)\cap X$, the length of the longest $(x_i,x_j,S)$-path is at least $4$. Hence, since
$|C|\leq k-1\leq 7$, by Claim~\ref{clai1}, we get the next claim:

\begin{claim}\label{clai2} For every bridge $S$ of $X$ and any distinct $x_i,x_j\in N(S)\cap X$,
the length of any $(x_i,x_j,S)$-path is at most $3$. In particular,
$G[S]$ is acyclic (a tree).
\end{claim}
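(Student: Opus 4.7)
My plan is to split Claim \ref{clai2} into two assertions --- (I) every $(x_i, x_j, S)$-path has length at most $3$, and (II) $G[S]$ contains no cycle --- and handle them in turn. Part (I) is immediate: since $r + 1 = |C| \leq k - 1 \leq 7$, we have $d_C(i, j) \leq \lfloor (r+1)/2 \rfloor \leq 3$, and Claim \ref{clai1} then gives the desired bound on the length of any $(x_i, x_j, S)$-path.

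For (II), I argue by contradiction and assume $G[S]$ contains a cycle $D$ of length $\ell \geq 3$. The goal is to build an $(x_i, x_j, S)$-path of length at least $4$, contradicting~(I). The main step is an application of Menger's theorem between the disjoint sets $V(D)$ and $X$: these sets satisfy $|V(D)|, |X| \geq 3$, and since $G - w$ is connected for every vertex $w$ (as $G$ is $2$-connected), no single vertex of $G$ can separate $V(D)$ from $X$. Hence Menger produces two vertex-disjoint paths $P_1, P_2$ in $G$ from distinct vertices $v_1, v_2 \in V(D)$ to distinct vertices $x_i, x_j \in X$. Using $N(S) \subseteq X$ (which holds because $S$ is a component of $G - X$), I then truncate each $P_\ell$ at its first vertex of $X$, so that its interior is contained in $S$; the truncated paths remain internally disjoint, with distinct endpoints in $X$.

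To finish, let $Q$ be the longer of the two $v_1, v_2$-arcs of $D$, of length at least $\lceil \ell / 2 \rceil \geq 2$. Concatenating $P_1$ (traversed from $x_i$ to $v_1$), the arc $Q$, and $P_2$ (traversed from $v_2$ to $x_j$) yields an $(x_i, x_j, S)$-path of length at least $1 + 2 + 1 = 4$, contradicting~(I). I expect the only mildly delicate step to be the Menger invocation --- specifically, verifying that no single vertex of $G$ separates $V(D)$ from $X$ --- but this reduces at once to the $2$-connectedness of $G$ together with $|V(D)|, |X| \geq 3$; the truncation is then immediate from $N(S) \subseteq X$.
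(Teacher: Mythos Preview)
Your argument is correct and mirrors the paper's: both derive part (I) from $|C|\le 7$ and Claim~\ref{clai1}, and for (II) both use $2$-connectedness to produce two disjoint $V(D)$--$X$ paths and then route through the cycle $D$ to get an $(x_i,x_j,S)$-path of length at least $4$. One point worth making explicit: for the concatenation $P_1\cup Q\cup P_2$ to be a \emph{simple} path you need the interiors of $P_1,P_2$ to avoid $V(D)$, not just $X$; this is automatic from the set version of Menger's theorem (which returns $V(D)$--$X$ paths internally disjoint from $V(D)\cup X$), so your truncation step on the $X$ side is in fact already subsumed, and no truncation on the $V(D)$ side is needed.
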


Suppose that for some bridge $S$, and two leaves $c_1,c_2$ of the tree $G[S]$, there is a $c_1,c_2$-path $P$ in $G[S]$
of length   at least $3$. Then by Claim~\ref{clai2}, each of $c_1$ and $c_2$ has exactly one neighbor in $X$, and this
is the same vertex, say $x_i$. Again by the $2$-connectedness of $G$, there is $x_j\in X\cap N(S) \setminus\{x_i\}$. Then
there is an $(x_j,x_i,S)$-path of length at least $4$ through either $c_1$ or $c_2$, which contradicts Claim~\ref{clai2}.
Thus we get:

\begin{claim}\label{clai3} For every bridge $S$ of $X$, $G[S]$ is a star. Moreover,
if $|S|\geq 3$, then all leaves of $G[S]$ have degree $2$ in $G$ and the same neighbor, $x(S)$, in $X$.\end{claim}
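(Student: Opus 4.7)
The plan is as follows. The first assertion, that $G[S]$ is a star, is essentially already settled by the paragraph immediately preceding the claim: Claims~\ref{clai1} and~\ref{clai2} together force any two leaves of the tree $G[S]$ to lie at $G[S]$-distance at most $2$, and a tree whose leaves pairwise lie within distance $2$ is necessarily a star. It therefore remains to prove the ``moreover'' part. Let $c_0$ denote the center of the star $G[S]$ and $c_1,\ldots,c_p$ its leaves; by hypothesis $|S|\geq 3$, so $p\geq 2$.

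I would prove the moreover statement in three short steps. First, each leaf $c_i$ has at least one neighbor in $X$, for otherwise $c_0$ would be the unique neighbor of $c_i$ in $G$, making $c_0$ a cut vertex and contradicting the $2$-connectedness of $G$. Second, all leaves share a common $X$-neighbor: if $c_i,c_{i'}$ were two distinct leaves with neighbors $x_j,x_l\in X$ and $x_j\neq x_l$, then $x_j,c_i,c_0,c_{i'},x_l$ would be an $(x_j,x_l,S)$-path of length $4$, contradicting Claim~\ref{clai2}. Denote this common neighbor by $x(S)$. Third, no leaf has two distinct neighbors in $X$: if $c_i$ were adjacent to distinct $x_j,x_l\in X$, pick any other leaf $c_{i'}$ (which exists since $p\geq 2$), and by the first step let $x_m\in X$ be a neighbor of $c_{i'}$; whichever of $x_j,x_l$ differs from $x_m$, say $x_j$, produces the $(x_j,x_m,S)$-path $x_j,c_i,c_0,c_{i'},x_m$ of length $4$, again contradicting Claim~\ref{clai2}. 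Combining these three steps yields $N_G(c_i)=\{c_0,x(S)\}$, and hence $d_G(c_i)=2$, as required.

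The only real obstacle is careful bookkeeping: at each use of Claim~\ref{clai2} one must verify that the two $X$-endpoints of the length-$4$ path being constructed are genuinely distinct vertices of $X$, which is precisely where the hypothesis $|S|\geq 3$, guaranteeing the existence of a second leaf, enters the argument. Apart from this, no tool deeper than Claim~\ref{clai2} and the $2$-connectedness of $G$ is required.
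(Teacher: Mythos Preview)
Your proof is correct and follows the same approach as the paper: the paper's preceding paragraph establishes the star structure via Claim~\ref{clai2} and $2$-connectedness exactly as you describe, and your three-step argument for the ``moreover'' part simply makes explicit (using the same two tools) what the paper leaves to the reader.
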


Suppose $|S|\geq 2$ and $|N(S)\cap X|\geq 3$, say $\{x,x',x''\}\subseteq  N(S)\cap X$. Let $c_1$ be a leaf of $G[S]$.
If $|S|\geq 3$, then by Claim~\ref{clai2} it has a unique neighbor in $X$, say $x$. It follows that there are
an $(x,x',S)$-path and an $(x,x'',S)$-path of length at least $3$. Also there is an $(x',x'',S)$-path of length at least $2$.
Then by Claim~\ref{clai1},  the distance on $C$ from $x$ to $x'$ and to $x''$ is at least $3$ and between $x'$ and $x''$ is
at least $2$. Thus
$|X|\geq 3+3+2=8$, a contradiction. Similarly, if $S=\{c_1,c_2\}$, then by symmetry we may
assume that $x\in N(c_1)\cap X$ and $\{x',x''\}\subseteq  N(c_2)\cap X$. In this case again by Claim~\ref{clai1},
$|X|\geq 3+3+2=8$, a contradiction.
Thus summarizing this with the previous claims, we have proved the following.

\begin{claim}\label{clai4} For every bridge $S$ of $X$ with $|S|\geq 2$, $|N(S)\cap X|=2$. Moreover,
if $|S|\geq 3$, then $G[S]$ is a star and all leaves of $G[S]$ have degree $2$ in $G$ and the same neighbor, $x(S)$, in $X$.
In other words, each bridge $S$ with $|S|\geq 2$ is a $J_3$-bridge of $X$.\end{claim}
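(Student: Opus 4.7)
The plan is by contradiction: assume $|S|\ge 2$ and $|N(S)\cap X|\ge 3$, and fix three distinct vertices $x,x',x''\in N(S)\cap X$. I intend to construct an $(x,x',S)$-path of length $3$, an $(x,x'',S)$-path of length $3$, and an $(x',x'',S)$-path of length $2$. Claim~\ref{clai1} then gives $d_C(x,x')\ge 3$, $d_C(x,x'')\ge 3$ and $d_C(x',x'')\ge 2$. Placing $x,x',x''$ on $C$ partitions it into three direct arcs whose lengths sum to $|X|$; each pairwise cycle distance is the shorter of the two arcs between that pair, so each direct arc is at least the corresponding distance bound. Summing yields $|X|\ge 3+3+2=8$, contradicting $|X|\le k-1\le 7$.

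To obtain the three paths I split on $|S|$. If $|S|\ge 3$, Claim~\ref{clai3} makes $G[S]$ a star whose centre $c$ is the only vertex of $S$ that can carry $X$-neighbours distinct from the common leaf-neighbour $x(S)$; relabelling so $x=x(S)$, we have $\{x',x''\}\subseteq N(c)$, and for any leaf $c_1$ the paths $x\,c_1\,c\,x'$, $x\,c_1\,c\,x''$, and $x'\,c\,x''$ do the job. If $|S|=2$, write $S=\{c_1,c_2\}$; since removing $c_i$ must leave $c_{3-i}$ connected to $X$, both $N(c_1)\cap X$ and $N(c_2)\cap X$ are nonempty. By pigeonhole one of $c_1,c_2$, say $c_2$, is adjacent to at least two of $x,x',x''$, which we relabel as $x',x''$. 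If the remaining vertex is adjacent to $c_1$, relabel it $x$; otherwise all three lie only in $N(c_2)$, and any $y\in N(c_1)\cap X$ (outside $\{x',x''\}$) plays the role of $x$. The paths $x\,c_1\,c_2\,x'$, $x\,c_1\,c_2\,x''$, and $x'\,c_2\,x''$ then have the required lengths $3,3,2$.

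The principal subtlety is the bookkeeping in the $|S|=2$ case, verifying that every distribution of the three chosen $X$-neighbours between $N(c_1)$ and $N(c_2)$ can, by swapping $c_1\leftrightarrow c_2$ and possibly exchanging one of $x,x',x''$ for another vertex of $N(S)\cap X$, be reduced to the canonical configuration $x\in N(c_1)$, $\{x',x''\}\subseteq N(c_2)$. Once $|N(S)\cap X|=2$ is established, the ``moreover'' clause is exactly Claim~\ref{clai3}, and the $J_3$-bridge conclusion is immediate: writing $N(S)\cap X=\{x_1,x_2\}$, this set is a cutset of $G$ separating $S$ from $X\setminus\{x_1,x_2\}$; Claim~\ref{clai2} bounds the longest $x_1,x_2$-path in $G[S\cup\{x_1,x_2\}]$ by $3$, and the star structure (with the forced assignment of one endpoint to a leaf and the other to the centre in the case $|S|\ge 3$, or with the edge $c_1c_2$ in the case $|S|=2$) realises length exactly $3$; and $G[S\cup\{x_1,x_2\}]\cup\{x_1x_2\}$ is visibly $2$-connected.
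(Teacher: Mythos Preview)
Your proof is correct and follows essentially the same route as the paper's: assume three attachment vertices, produce $(x,x',S)$- and $(x,x'',S)$-paths of length $3$ and an $(x',x'',S)$-path of length $2$, then sum the arc lengths on $C$ via Claim~\ref{clai1} to force $|X|\ge 8$. The paper handles the $|S|\ge 3$ and $|S|=2$ cases in the same way (leaf neighbour versus pigeonhole on $N(c_1),N(c_2)$), and your added explicit verification of the $J_3$-bridge axioms is a welcome elaboration; the one cosmetic point is that in the $|S|\ge 3$ case you should \emph{replace} $x$ by $x(S)$ rather than ``relabel'' (since $x(S)$ need not be among the originally fixed triple), but you already flag exactly this kind of exchange in your discussion of the bookkeeping.
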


From Claims~\ref{clai1} and~\ref{clai4} we deduce:

\begin{claim}\label{clai4a} For every $J_3$-bridge $S$ of $X$ with endpoints $x_i$ and $x_j$,
$d_C(i,j)\geq 3$.\end{claim}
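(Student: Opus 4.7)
The plan is to apply Claim~\ref{clai1} directly. Unwinding the definition of a $J_3$-bridge, the endpoints $x_i, x_j\in X$ satisfy the property that the longest $x_i,x_j$-path in $G[S\cup\{x_i,x_j\}]$ has length exactly three. In particular such a path $x_i,c_1,c_2,x_j$ exists, with $c_1,c_2\in S$ distinct, $x_ic_1\in E(G)$ and $x_jc_2\in E(G)$.

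With this path in hand, the ``in particular'' clause of Claim~\ref{clai1} immediately yields $d_C(i,j)\geq 3$. Alternatively, the first assertion of Claim~\ref{clai1} says that every $(x_i,x_j,S)$-path has length at most $d_C(i,j)$; since we exhibited one of length $3$, we conclude $d_C(i,j)\geq 3$ the same way.

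There is essentially no obstacle: once the definition of a $J_3$-bridge is unpacked and used to extract a length-$3$ path through $S$, the bound is a one-line consequence of Claim~\ref{clai1}. The only sanity check one should make is that a length-$3$ path through $S$ actually exists (rather than the ``longest $=3$'' being vacuous); this is guaranteed by the $2$-connectedness of $G[S\cup\{x_i,x_j\}]\cup\{x_ix_j\}$ in the $J_3$-bridge definition, which forces an $x_i,x_j$-path internally disjoint from the edge $x_ix_j$, hence one through $S$.
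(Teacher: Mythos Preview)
Your proof is correct and mirrors the paper's own reasoning: the paper simply states that the claim follows ``from Claims~\ref{clai1} and~\ref{clai4},'' and your unpacking of the $J_3$-bridge definition to extract a length-$3$ $(x_i,x_j,S)$-path and then invoke Claim~\ref{clai1} is exactly that deduction made explicit. The reference to Claim~\ref{clai4} in the paper serves only to connect the notion of $J_3$-bridge back to bridges of $C$, which you handle directly from the definition.
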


If there are $i_1<i_2<i_3<i_4\leq r$ and bridges $S_1$ and $S_2$ such that $G$ contains an $(x_{i_1},x_{i_3},S_1)$-path $P_1$ and
an $(x_{i_2},x_{i_4},S_2)$-path $P_2$, then we can construct two new cycles $C_1$ and $C_2$ such that each of them contains the
edges of $P_1$ and $P_2$ and each edge of $C$ belongs to exactly one of $C_1$ and $C_2$. Then the total length of $C_1$ and
$C_2$ is at least $|E(C)|+2(|E(P_1)|+|E(P_2)|)\geq (k-1)+8\geq 2k-1$. Thus at least one of them is longer than $C$, a contradiction.
Thus we have:

\begin{claim}\label{clai5} There are no $i_1<i_2<i_3<i_4\leq r$ and bridges $S_1$ and $S_2$ of $X$ such that $G$ contains an $(x_{i_1},x_{i_3},S_1)$-path  and
an $(x_{i_2},x_{i_4},S_2)$-path. In particular, since $k-1\leq 7$, any two $J_3$-bridges share an endpoint.\end{claim}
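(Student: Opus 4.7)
The plan is to derive a contradiction to the maximality of $C$ by assembling $P_1,P_2$ and the arcs of $C$ into two cycles whose total length forces one of them to exceed $|E(C)|$. Assume such indices $i_1<i_2<i_3<i_4$ and paths $P_1,P_2$ exist. The four vertices $x_{i_1},x_{i_2},x_{i_3},x_{i_4}$ split $C$ into four internally disjoint arcs $A_1,A_2,A_3,A_4$, where $A_j$ runs from $x_{i_j}$ to $x_{i_{j+1}}$ (subscripts modulo $4$). Since each $P_j$ has at least one internal vertex in a bridge, hence disjoint from $X$, we have $|E(P_1)|,|E(P_2)|\geq 2$, and $P_1,P_2$ share no interior with each other or with $C$.

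Next I would form two cycles by alternating arcs and crossing paths,
\[
C_1 := A_1 \cup P_2 \cup A_3 \cup P_1, \qquad C_2 := A_2 \cup P_1 \cup A_4 \cup P_2,
\]
each traversed in the orientation forced by the endpoints of the pieces. Every arc $A_j$ appears in exactly one of $C_1,C_2$, while each of $P_1,P_2$ appears in both, so
\[
|E(C_1)|+|E(C_2)| \;=\; |E(C)| + 2\bigl(|E(P_1)|+|E(P_2)|\bigr) \;\geq\; |E(C)|+8.
\]
By pigeonhole one of $C_1,C_2$ has length at least $\tfrac12(|E(C)|+8)$, which strictly exceeds $|E(C)|$ whenever $|E(C)|<8$. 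Since $c(G)<k\leq 8$ forces $|E(C)|\leq 7$, this contradicts the choice of $C$ as a longest cycle.

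For the ``in particular'' assertion, suppose two $J_3$-bridges had disjoint endpoint pairs $\{x_a,x_b\}$ and $\{x_c,x_d\}$. By Claim~\ref{clai4a} each pair lies at cyclic distance at least $3$ on $C$, and since $|E(C)|\leq 7$ a short case check shows that two such disjoint pairs must interleave around $C$. Applying the first part of the claim to these interleaved endpoints, together with the two associated $J_3$-paths (each of length $3$), then yields a contradiction, so the two bridges must share an endpoint.

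The step I expect to be most delicate is verifying that $C_1$ and $C_2$ are honest cycles with each edge of $C$ used exactly once; once the orientations of the arcs are fixed this is essentially forced by the cyclic order of $x_{i_1},\dots,x_{i_4}$, but it should be spelled out. The small-case check for the ``in particular'' part is brief but must handle $|E(C)|\in\{6,7\}$ explicitly, since for $|E(C)|\leq 5$ no $J_3$-bridge exists by Claim~\ref{clai4a}.
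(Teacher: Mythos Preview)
Your argument is correct and follows the same route as the paper: form the two ``crossed'' cycles $C_1$ and $C_2$ from the arcs of $C$ and the paths $P_1,P_2$, observe that their combined length is $|E(C)|+2(|E(P_1)|+|E(P_2)|)\geq |E(C)|+8$, and use $|E(C)|\leq k-1\leq 7$ to force one of them to exceed $|E(C)|$. Your treatment is in fact more explicit than the paper's, both in writing down $C_1,C_2$ concretely and in spelling out the interleaving check for the ``in particular'' assertion; the paper simply asserts that two such cycles exist and leaves the $J_3$-bridge consequence to the reader.
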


We  now can prove Theorem~\ref{t3small}. Indeed, by Claim~\ref{clai1}, $|X|\geq 4$. This proves ${\mathcal G}(n,4)=\emptyset$,
i.e., Part 1 of the theorem.

We will consider $3$ cases according to the value of $|X|$. As mentioned above, $|X|\geq 4$.

\bigskip

{\bf Case 1:}  $4\leq |X|\leq 5$. Then by Claims~\ref{clai4} and~\ref{clai4a}, each bridge is a singleton. Furthermore, by Claim~\ref{clai1} each such singleton
has exactly two (necessarily nonconsecutive) neighbors in $X$. If $|X|=4$, Claim~\ref{clai5} yields that this pair of neighbors is the same for all bridges, say
it is $\{x_0,x_2\}$.
Then $G$ is contained in $H_{n,5,2}$ with $A=\{x_0,x_2\}$, as claimed. This proves Part 2.

Let $|X|=5$. If also each bridge has the same pair of neighbors in $X$, say $\{x_0,x_2\}$,  then since  $n\geq |X|+1=6$, $x_1$ is not adjacent to
$\{x_3,x_4\}$ to avoid a $6$-cycle. Thus in this case, $G$ is contained in $H_{n,6,2}$ with $A=\{x_0,x_2\}$, and so $e(G)\leq h(n,6,2)$.
 Otherwise
 by Claim~\ref{clai5}, there are exactly two distinct pairs of neighbors of the bridges, and they share a vertex. Suppose these pairs are
 $\{x_0,x_2\}$ and $\{x_0,x_3\}$ and for $j\in\{2,3\}$, $Y_j$ is the set of vertices adjacent to $x_0$ and $x_j$. Then to avoid a $6$-cycle,
 edges $x_1x_4,x_1x_3$ and $x_2x_4$ are not present in $G$. Then $G\in \mathcal{G}_2(n,6)$ with $A=\{x_0,x_2\}$, $B=Y_2\cup\{x_3\}$ and
 $J=Y_3\cup\{x_4\}$. 
Since $H_{n,6,2}$ contains $H_{n,5,2}$, this together with the previous paragraph proves Part 3 of the theorem.

\bigskip

{\bf Case 2:}  $ |X|=6$.  By Claims~\ref{clai4}--\ref{clai5}, it is enough to consider the following three subcases.

\medskip

{\em Case 2.1:} $X$ has a bridge $S$ with $|N(S)\cap X|\geq 3$.  By Claim~\ref{clai4}, $S$ is a single vertex, say $z$, and by Claim~\ref{clai1},
 $z$ has exactly $3$ (nonconsecutive) neighbors on $C$, say $x_0,x_2$ and $x_4$.
In view of the cycle $x_0zx_2x_3x_4x_5$ and the maximality of the
 degree sum  of $C$, $d(x_1)\geq d(z)\geq 3$. By Claim~\ref{clai5}, $x_1$ has no neighbors outside of $C$. In order to avoid a $7$-cycle in $G$,
 $x_1x_3,x_1x_5\notin E(G)$. So $x_1x_4\in E(G)$.
Similarly, $x_2x_5,x_0x_3\in E(G)$, so $G$ contains $K_{3,4}$ with parts $A=\{x_0,x_2,x_4\}$ and $B=\{x_1,x_3,x_5,z\}$.
Moreover, $B$ is independent. Let $C$ be the vertex set of any component of $G-A-B$. If $C$ has a neighbor in $B$ or is not a singleton,
then $G[A\cup B\cup C]$ has a cycle of length at least $7$. Thus each component of $G-A-B$ is a singleton and has no neighbors in $B$.
This means
 $A $ meets all edges and so $G$ is a subgraph of $H_{n,7,3}$.
\medskip

{\em Case 2.2:}  $X$ has a $J_3$-bridge $S$.  Then by Claim~\ref{clai1} and symmetry, we may assume  $N(S)=\{ x_0,x_3\}$.
 In this case, $G$ has $3$ internally disjoint $x_0,x_3$-paths of length $3$. Thus to have $c(G)\leq 6$,
  $\{ x_0,x_3\}$ separates internal vertices of distinct paths. It
  follows that $G-\{x_0,x_3\}$ is a collection of $J_3$-bridges of  $\{x_0,x_3\}$ and isolated vertices
  each having only $x_0$ and $x_3$ as endpoints.
Thus $G$ is a subgraph of a graph in  ${\mathcal G}_3(n,6)$. 

 \medskip

{\em Case 2.3:} $V\setminus X$ is independent and each $z\in V\setminus X$ has  degree $2$.
 By Theorem~\ref{th:Kopylov2}, for each $z\in V\setminus X$, graph $G[X
\cup \{ z\}]$ has at most $h(7,7,2)=14$ edges, which yields $e(G)\leq 2n=h(n,7,2)$.
This proves Part 4 of Theorem~\ref{t3small}.

\bigskip

{\bf Case 3:}  $ |X|=7$. By Claims~\ref{clai4}--\ref{clai5}, it is enough to consider the following four subcases.

\medskip

{\em Case 3.1:} $X$ has a bridge $S$ with $|N(S)\cap X|\geq 3$.  As in Case 2.1, $S$ is a single vertex, say $z$, and
we may assume $N(S)\cap X=\{x_0,x_2,x_4\}$. Again, similarly to Case 2.1, in view of
 the $7$-cycle $x_0zx_2x_3x_4x_5x_6$, we obtain that $d(x_1)\geq d(z)\geq 3$, and  that (to avoid a long cycle in $G$) the third neighbor of $x_1$  is $x_4$.
Similarly, $x_0x_3\in E(G)$. Thus,  $G$ has a subgraph consisting of   $K_{3,3}$ with parts $A:=\{x_0,x_2,x_4 \}$ and $B:=\{ x_1, x_3, z\}$ and an attached $3$-path $x_4x_5x_6x_0$. Moreover, $d(x_1)=d(x_3)=d(z)=3$ and these are isolated vertices in $G\setminus A$. Let $Y$ be the vertex set of the component of $G-A$ containing
$\{x_5,x_6\}$. If there is another component $Y'$ of $G-A$ with $|Y'|\geq 2$, then to avoid a $\geq 8$-cycle, $G$ must be a subgraph of a graph in  ${\mathcal G}_3(n,8)$.
If all the bridges of $A$ apart from $A$ are singletons, then $G$ is a subgraph of a graph in either ${\mathcal G}_1(n,8)$  (if $|Y|=2$) or ${\mathcal G}_2(n,8)$  (if $|Y|\geq 3$).

\medskip

{\em Case 3.2:} $G$ has $J_3$-bridges $S_1$ and $S_2$ of $X$ with $N(S_1)\neq N(S_2)$. By Claims~\ref{clai5} and~\ref{clai4a},
we may assume $N(S_1)=\{x_0,x_3\}$ and $N(S_2)=\{x_0,x_4\}$. By the $2$-connectivity of $G$, we may assume that there is an
$(x_0,x_3,S_1)$-path
   $x_0y_1y_2x_3$ and an $(x_4,x_0,S_2)$-path $x_4y_5y_6x_0$. Let $A=\{ x_0,x_3,x_4\}$.
Then the edges $y_1y_2$, $y_5y_6$, $x_1x_2$, $x_5x_6$ belong to distinct components of $G\setminus A$.
Thus to avoid long cycles in $G$, no bridge of $A$ is adjacent to both, $x_3$ and $x_4$ and none of the bridges $S$ of $A$
contains an $(x_0,x_3,S)$-path or an $(x_0,x_4,S)$-path of length at least $4$. It follows that $G$ is a subgraph of a graph in
${\mathcal G}_5(n,8)$.

\medskip

{\em Case 3.3:} $G$ has a $J_3$-bridge $S$  of $X$, and every other $J_3$-bridge of $X$ (if exists) has the same neighbors as $S$
in $X$.  We may assume that $N(S)\cap X=\{x_0,x_4\}$ and $G$ contains an $(x_0,x_4,S)$-path
 $x_0y_6y_5x_4$.
Then the edges $y_5y_6$, $x_1x_2$, $x_5x_6$ belong to three distinct components of $G\setminus \{ x_0,x_4\}$.
Let $Y$ be the component  of $G\setminus \{ x_0,x_4\}$ containing $\{ x_1,x_2,x_3\}$.
By the case, all other components are either isolated vertices or $J_3$-bridges of $\{ x_0,x_4\}$.
Also, every vertex $y\in (Y\setminus \{ x_1,x_2,x_3\}) $ has only neighbors in $X$ (i.e., $N(y)\subset \{ x_0, x_1, \dots, x_4\} $).

If $|Y|=3$ we obtain that $G$ is a subgraph of a member of ${\mathcal G}_8(n,8)$ with $A=\{x_0,x_1,x_2,x_3,x_4\}$.
Suppose $|Y|\geq 4$. If there is $y\in Y \setminus\{x_1,x_2\}$ with $N_G(y)=\{x_0,x_3\}$, then to avoid an $8$- or $9$-cycle,
 $x_1x_4\notin E(G)$ and no $y'\in Y\setminus\{x_2,x_3\}$ has $N_G(y')=\{x_1,x_4\}$. So, either $\{x_0,x_3\}$ is a cut
 set in $G$ or $x_2x_4\in E(G)$. In the former case, $G$ is a subgraph of a graph in
${\mathcal G}_5(n,8)$ with $A=\{x_0,x_3,x_4\}$ and $a_1=x_0$. In the latter case, in order to avoid an $(x_0,x_4,Y)$-path of length
$\geq 5$, graph $G[\{x_1,x_2,x_3,x_4,y\}]$ has only the $5$ edges we already know and no vertex $y'\in Y-X-y$ has
$N(y')\subseteq \{x_1,x_2,x_3,x_4,y\}$. This means $G$ is a subgraph of a graph in
${\mathcal G}_6(n,8)$ with $A=\{x_0,x_4,x_2,x_3\}$, where $a_1=x_0$ and $a_2=x_4$. The case of
$y\in Y\setminus\{x_1,x_2\}$ with $N_G(y)=\{x_1,x_4\}$ is symmetrical. If there is $y\in Y\setminus\{x_1\}$ with $N(y)=\{x_0,x_2\}$, then
in order to avoid an $(x_0,x_4,Y)$-path of length
$\geq 5$, $x_1x_3\notin E(G)$ and every $y'\in Y-X$ is adjacent to $x_2$. This means $G$ is a subgraph of a graph in
${\mathcal G}_2(n,8)\cup {\mathcal G}_3(n,8)$ with $A=\{x_2,x_4,x_0\}$. The last possibility is that $N(y)=\{x_1,x_3\}$
for every $y\in Y-X$. Since $|Y|\geq 4$, this yields $x_2x_0,x_2x_4\notin E(G)$. Thus
 $G$ is a subgraph of a member of ${\mathcal G}_7(n,8)$
 with $\{a_1,a_2\}:=\{ x_0,x_4\}$ and $\{a_3,a_4\}:=\{x_1,x_3\}$.

\medskip

{\em Case 3.4:} $G\setminus X$ consists of isolated vertices only, each having degree 2 in $G$.
By Theorem~\ref{th:Kopylov2}, for each $z\in V\setminus X$, graph $G[X
\cup \{ z\}]$ has at most $h(8,8,2)=19$ edges, which yields $e(G)\leq 2n+3=h(n,8,2)$.
\qed

\bigskip

Theorem~\ref{t3small} yields the following analog of Theorem~\ref{main}(1) for a smaller range of $e(G)$.

\medskip

\begin{cor}\label{cor_C7}
Suppose that $G$ is a $2$-connected, $n$-vertex graph with $c(G)<7$,  $n\geq 8$.
If $e(G)\geq \lfloor (5n-6)/2\rfloor$ then $G$ is a subgraph of $H_{n,7,3}$, and this bound is  best possible.
 \qed
 \end{cor}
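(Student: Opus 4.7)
I would apply Theorem~\ref{t3small} with $k=7$ and then rule out every candidate class except $\{H_{n,7,3}\}$ by a direct edge count. First, observe that $h(n,7,2)=\binom{5}{2}+2(n-5)=2n$, and for $n\geq 8$ one has $\lfloor(5n-6)/2\rfloor-2n=\lfloor(n-6)/2\rfloor\geq 1$, so the hypothesis forces $e(G)>h(n,7,2)$. Theorem~\ref{t3small}(4) then places $G$ as a subgraph of some member of $\{H_{n,7,3}\}\cup\mathcal{G}_1(n,6)\cup\mathcal{G}_2(n,6)\cup\mathcal{G}_3(n,6)$.

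The classes $\mathcal{G}_1(n,6)$ and $\mathcal{G}_2(n,6)$ drop out immediately: $e(H_{n,6,2})=2n-2$, and each member of $\mathcal{G}_2(n,6)$ has exactly $1+2(n-2)=2n-3$ edges (one in $G[A]$, plus two from each vertex of $B\cup J$), both strictly less than $\lfloor(5n-6)/2\rfloor$ for $n\geq 8$. The heart of the argument will be the edge bound for $G\in\mathcal{G}_3(n,6)$. Writing $V(G)=A\cup B\cup J$ with $|A|=2$ (so $A'=A$), letting $q$ denote the number of $2$-vertex components of $G[J]$ and $s=\sum_{|S|\geq 3}|S|$ so that $|J|=s+2q$, one checks that a star component of size $\geq 3$ contributes at most $2|S|$ edges ($|S|-1$ star-edges, $|S|-1$ leaves to $a(S)$, and up to two centre-to-$A$ edges) while a $2$-vertex component contributes at most $5$ edges, yielding
\[ e(G)\leq 1+2|B|+2s+5q=1+2|B|+2|J|+q=2n-3+q. \]
To maximise $q$: if $|J|$ is even, $G[J]$ can be an all-$2$-vertex matching and $q=|J|/2$; if $|J|$ is odd, then at least one component has odd size $\geq 3$, forcing $q\leq(|J|-3)/2$. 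Since every vertex moved from $J$ to $B$ can only shrink the bound, the optimum is $|B|=0$ for $n$ even (giving $e(G)\leq(5n-8)/2$) and $|B|=1$ for $n$ odd (giving $e(G)\leq(5n-9)/2$); both values equal $\lfloor(5n-6)/2\rfloor-1$, so $\mathcal{G}_3(n,6)$ is also excluded, proving $G\subseteq H_{n,7,3}$.

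For sharpness, I would exhibit the optimum above: take $A=\{a_1,a_2\}$ with $a_1a_2\in E$, and either $|B|=0$ with $J$ a perfect matching on $n-2$ vertices (for $n$ even), or $B=\{b\}$ with $J$ a matching on $n-3$ vertices (for $n$ odd), joining every vertex of $J$ to both $a_1$ and $a_2$. Such a graph lies in $\mathcal{G}_3(n,6)$, is $2$-connected, has longest cycle exactly $6$ (any longer cycle would have to use both $a_1,a_2$ and three matched pairs, impossible since consecutive vertices of a pair must go through $\{a_1,a_2\}$), contains exactly $\lfloor(5n-6)/2\rfloor-1$ edges, and is not a subgraph of $H_{n,7,3}$ because its $\lceil(n-2)/2\rceil\geq 3$ matching edges in $J$ cannot all be accommodated in the $4$-vertex clique $A\cup C$ of $H_{n,7,3}$ (whose $B$-part is independent). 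The main obstacle I anticipate is the optimisation inside $\mathcal{G}_3(n,6)$, where the parity of $|J|$ and the requirement that $G[J]$ have at least two components must be handled simultaneously; once this is done the rest is a straightforward application of Theorem~\ref{t3small}.
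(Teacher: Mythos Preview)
Your approach is correct and is exactly the route the paper intends: the corollary is stated immediately after Theorem~\ref{t3small} with only a \qed, so the paper's ``proof'' is simply the assertion that it follows from Theorem~\ref{t3small} by edge-counting, which is precisely what you carry out in detail.

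Two small remarks. First, the sentence ``every vertex moved from $J$ to $B$ can only shrink the bound'' is not literally true for $n$ odd (moving one vertex from $J$ to $B$ flips the parity of $|J|$ and actually raises the bound from $(5n-11)/2$ to $(5n-9)/2$); your stated optima $|B|=0$ for $n$ even and $|B|=1$ for $n$ odd are nevertheless correct, so just tighten the wording. Second, your count of at most $2|S|$ edges for a star component $S$ with $|S|\ge 3$ relies on each leaf having exactly one neighbour in $A'$; this is indeed the intended reading of the definition of $\mathcal G_3$ (compare Claim~\ref{clai4}, where leaves of large bridges have degree~$2$ in $G$), but it is worth saying so explicitly.
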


 \bigskip

\section{ Concluding remarks}
It could be that for $k\geq 11$, Theorem~\ref{t3} holds already for $n\geq 5k/4$. Note that by Theorem~\ref{th:Kopylov2}, it does not hold
for $n<5k/4$. It may also be possible, albeit complicated, to describe the structure of 2-connected $n$-vertex graphs with no cycles of length at least $k = 2t + 1$ and at least $h(n,k,t - 2)$ edges. We leave these as avenues for further research.

\paragraph{Acknowledgment.}
We thank both referees and R. Luo for very helpful comments.

\end{document}